\documentclass[12pt]{amsart}
\usepackage{amssymb}
\usepackage{mathscinet}
\usepackage{cite}
\usepackage{enumitem}
\usepackage[margin=1.25in]{geometry}
\usepackage{latexsym,graphicx}
\usepackage[utf8]{inputenc}
\usepackage[mathscr]{euscript}
\usepackage{bbm}
\usepackage{amsmath}
\usepackage{amscd}
\usepackage{amsthm}
\usepackage{todonotes}
\usepackage{verbatim}
\usepackage{xcolor}
\usepackage[
	colorlinks,
	linkcolor={black},
	citecolor={black},
	urlcolor={black}
]{hyperref}

\def\freq{\varphi}
\def\Acoef{A}
\def\Bcoef{B}
\def\th{\vartheta}

\newcommand{\rmd}{{\mathrm{d}}}
\newcommand{\A}{{\mathcal A}}

\newcommand{\Hd}{{\mathrm{H}}}
\newcommand{\loc}{{\mathrm{loc}}}
\newcommand{\idty}{{\mathbf{I}}}

\newcommand{\set}[1]{\left\{#1\right\}}
\newcommand{\eqdef}{\overset{\mathrm{def}}=}

\date{}

\newcommand{\Z}{{\mathbb Z}}
\newcommand{\R}{{\mathbb R}}
\newcommand{\C}{{\mathbb C}}
\newcommand{\N}{{\mathbb N}}
\newcommand{\T}{{\mathbb T}}

\newcommand{\SL}{{\mathrm{SL}}}

\newtheorem{theorem}{Theorem}[section]
\newtheorem{lemma}[theorem]{Lemma}
\newtheorem{prop}[theorem]{Proposition}

\theoremstyle{definition}
\newtheorem{definition}[theorem]{Definition}
\newtheorem{remark}[theorem]{Remark}
\newtheorem{claim}{Claim}[theorem]
  
\newenvironment{claimproof}[1][Proof of Claim]{\noindent \underline{#1.} }{\hfill$\diamondsuit$}

\DeclareMathOperator{\tr}{Tr}
\DeclareMathOperator{\spectrum}{spec}
\newcommand{\traceMap}{{\mathsf{T}}}
\newcommand{\transferMatrix}{{\mathbf{T}}}
\newcommand{\monodromy}{{\mathbf{M}}}

\numberwithin{equation}{section}

\definecolor{purple}{rgb}{.5,0,1}
\definecolor{orange}{rgb}{1,.5,0}
\definecolor{green}{rgb}{0,.5,0}

\author[D. Damanik]{David Damanik}
\address{Department of Mathematics, Rice University, 6100 S. Main Street, Houston, Texas
77005-1892, USA}
\email{damanik@rice.edu}
\thanks{D.D.\ was supported in part by National Science Foundation grants DMS--2054752 and DMS--2349919}

\author[M. Embree]{Mark Embree}
\address{Department of Mathematics, Virginia Tech, Blacksburg, VA 24061, USA}
\email{embree@vt.edu}
\thanks{M.E.\ was supported in part by National Science Foundation grant DMS--2411141 and the Simons Institute for the Theory of Computing at UC Berkeley. }

\author[J. Fillman]{Jake Fillman}
\address{Department of Mathematics, Texas A\&M University, College Station, TX 77843,
USA}
\email{fillman@tamu.edu}
\thanks{J.\ F.\ was supported in part by National Science Foundation grant DMS--2513006 and by Simons Foundation grant MPS TSM--00013720.}

\author[A. Gorodetski]{Anton Gorodetski}
\address{Department of Mathematics, University of California, 
Irvine, CA 92697, USA}
\email{asgor@uci.edu}
\thanks{A.G. was supported in part by National Science Foundation grant DMS--2247966.}

\author[M. Mei]{May Mei}
\address{Department of Mathematics, Denison University, Granville, OH 43023, USA}
\email{meim@denison.edu}

\begin{document}

\title[Strongly Coupled Continuum Fibonacci Operators]{Continuum Fibonacci Schr\"odinger Operators\\ in the Strongly Coupled Regime}

\begin{abstract}
We study Schr\"{o}dinger operators on the real line whose potentials are generated by the Fibonacci substitution sequence and a rule that replaces symbols by compactly supported potential pieces. We consider the case in which one of those pieces is identically zero, and study the dimension of the spectrum in the large-coupling regime. 
Our results include a generalization of theorems regarding explicit examples that were studied previously and a counterexample that shows that the na\"ive generalization of previously established statements is false. 
In particular, in the aperiodic case, the local Hausdorff dimension of the spectrum does not necessarily converge to zero uniformly on compact subsets as the coupling constant is sent to infinity.
\end{abstract}

\maketitle

\hypersetup{
	linkcolor={black!30!blue},
	citecolor={black!30!green},
	urlcolor={black!30!blue}
}

\section{Introduction}

\subsection{Main Goals}
Since their discovery in the early 1980s, quasicrystals---structures admitting long-range order \emph{sans} periodicity---have played a significant role in materials science, physics, and mathematics.
The absence of periodicity coupled with long-range order leads to Hamiltonians with exotic spectral properties, such as singular continuous spectral measures, fractal spectrum, and anomalous quantum dynamical transport.

The Fibonacci substitution sequence has served as a central model of a one-dimensional quasicrystal starting with early works in the physics literature such as \cite{KohSutTan1987, KohKadTan1983PRL, Koh1992JSP, OPRSS1983}, and followed by a substantial amount of work in the mathematics literature, including \cite{Cantat2009DMJ, Cas1986CMP, DEGT08, DG11, DG12, DG13, DamGorYes2016Invent, Suto1989JSP, S87}; see \cite[Chapter~10]{DF2024ESO2} for more discussion and background about the Fibonacci Hamiltonian and \cite{BaaGri2013AO1} for more background on the mathematics of aperiodic order.
Changing the frequency from the inverse of the golden mean to other irrational numbers yields other models where the arithmetic properties of the continued fraction play an important role
\cite{Girand2014Nonlin, Mei2014JMP, MeiYes2014MMNP, Yessen2013JST}.
These works all studied the \emph{discrete} tight-binding model in $\ell^2(\Z)$. Here, we turn our attention to the continuum model in $L^2(\R)$ given by
\[ [H_V\psi] = -\psi''+V\psi \]
with $V: \R \to \R$ equivariant with respect to the Fibonacci sequence.
The continuum Fibonacci Hamiltonian is expected to share many features with the (by now) very well-understood discrete model, although it is considerably more subtle, due to its unbounded nature and the energy dependence of the Fricke--Vogt invariant.
This class of operators and other related models generated by aperiodic subshifts were first studied in the 2010s \cite{DFG2014, LenSeiSto2014JDE}, but many important questions remain open.
The work \cite{DFG2014} studied the dimension of the spectrum for \emph{constant} potential pieces in the regimes of high energy, small coupling, and high coupling.
{In \cite{DFG2014} the authors asked whether the asymptotics for the spectral dimension that they derived in the case of constant potential pieces remain
true for general pieces.
The results for large energies and small coupling were   shown to hold in complete generality (i.e., for arbitrary pieces of potential) \cite{FillmanMei2018AHP}, but the large-coupling regime remained elusive for the last decade.
The main objective of this work is to show that the asymptotics for high coupling constant from \cite{DFG2014} do \emph{not} hold for arbitrary potential pieces, while at the same time giving a partial result under a suitable positivity assumption.}
As we will see from the arguments and examples, the case of large coupling is considerably more delicate in general than the small-coupling and high-energy regimes.

Indeed, this is the key challenge in the work, especially compared to the discrete case, for which a more global understanding has been achieved \cite{DamGorYes2016Invent}.
More specifically, viewing $H_V = H_0+V$ where $H_0 = - \rmd^2/\rmd x^2$, one can recontextualize $H_{\lambda V}$ as a rescaling of $\lambda^{-1} H_0 + V$.
Thus, when $\lambda$ is large, the coefficient in front of $H_0$ is small.
In the discrete setting, $H_0$ is bounded and hence $\lambda^{-1} H_0 + V$ can be profitably studied as a perturbation of $V$, a diagonal operator.
However, in the present \emph{continuum} setting, $H_0$ and therefore $\lambda^{-1}H_0$ is unbounded and hence cannot be viewed as a small change to $V$, regardless of the size of $\lambda$.
Indeed, it seems that large-coupling asymptotics for continuum Schr\"odinger operators with ergodic potentials are quite delicate in general,\footnote{Indeed, the discrete case is well-studied, see, for instance, \cite{ACS83, MartinelliMicheli1987JSP, ShamisSpencer2015CMP, SoretsSpencer1991CMP} for results in that setting.} though there are a few positive results such as \cite{Bjerklov2006JMP, Bjerklov2007AHP, SoretsSpencer1991CMP}.
In fact, one crucial input needed for our work was a study of the large-coupling behavior of the Lyapunov exponent and integrated density of states (in the guise of the rotation number) for \emph{periodic} continuum Schr\"odinger operators, and even those results appear to be novel.

\subsection{Results}
An instance of the Fibonacci substitution sequence can be written as
\begin{equation}\label{eq:fib}
    \omega_n = 
    \left\lfloor (n+1)\alpha + \theta \right\rfloor
    - \left\lfloor n\alpha + \theta \right\rfloor,
    \quad n \in \Z,
\end{equation}
where $\alpha = (\sqrt{5}-1)/2$ denotes the inverse of the golden mean and $\theta \in \R$.
To produce the related continuum model, we choose functions $f_0,f_1 \in L^2([0,1))$ and define the resultant potential by placing a translated copy of $f_j$ in $[n,n+1)$ whenever $\omega_n=j$ ($j=0,1$). 
More precisely, we consider $V_\omega = V_{f_0,f_1,\omega}$ given by
\begin{equation} \label{eq:fibonacciPotDef}
    V_{\omega}(x) 
    = \sum_{ n \in \Z} f_{\omega_n}(x-n),
\end{equation}
where we slightly abuse notation by extending $f_j$ to vanish outside $[0,1)$.
One can also construct such potentials whenever $f_0$ and $f_1$ are defined on intervals of different lengths via 
 a suitable concatenation procedure; 
 this is discussed, for example, in  \cite[Section~1.2]{FillmanMei2018AHP}.

We denote the spectrum by $\Sigma = \Sigma(f_0,f_1)$, which is independent of the choice of $\theta \in \T$ \cite{DFG2014}.
In \cite{DFG2014}, the authors considered the case of locally constant potentials $f_0 = 0 \cdot \chi_{[0,1)}$ and $f_1 = 1 \cdot \chi_{[0,1)}$.
Abbreviating $\Sigma_\lambda := \Sigma(\lambda f_0,\lambda f_1)$, it was shown \cite[Corollary~6.6]{DFG2014} that
for this choice of  potential pieces, the local Hausdorff dimension of the spectrum satisfies
\begin{align}
\lim_{\lambda \to 0} \; \inf_{E \in \Sigma_\lambda} \; \dim_\Hd^\loc(E,\Sigma_\lambda) & = 1, \\
\lim_{K \to \infty} \; \inf_{E \in \Sigma_\lambda \cap [K,\infty)} \; \dim_\Hd^\loc(E, \Sigma_\lambda) & = 1.
\end{align}
Both of these statements were later generalized  to an arbitrary pair of functions $f_0, f_1$ such that the  {potential given by \eqref{eq:fib}--\eqref{eq:fibonacciPotDef}} is aperiodic\footnote{The statements are trivially true when the resulting potentials are periodic, i.e., $f_1=f_0$.} \cite[Theorems~1.3 and~1.4]{FillmanMei2018AHP}.

In the case $f_0 = 0 \cdot \chi_{[0,1)}$ and $f_1 =1\cdot \chi_{[0,1)}$, one also has (see \cite[Corollary 6.7]{DFG2014}): 
\begin{equation}\label{e.main}
\text{for any compact } \ S \subseteq \R, \ \text{\rm{we\ have}} \ 
\lim_{\lambda \to \infty} \dim_\Hd(\Sigma_\lambda \cap S) = 0.
\end{equation}
Such dimensional statements are quite useful, since they connect to questions of quantum dynamics \cite{Last1996JFA} and enable one (generally with even more work) to obtain information about higher-dimensional models \cite{DFG2021JFA}.

The goal of this manuscript is to study whether \eqref{e.main} can be generalized, and under what additional conditions this statement could hold. 
Our first main result shows that the natural extension of previous work to arbitrary potential pieces is \emph{false} in a fairly drastic fashion.
Let us denote by $C_0([0,1])$ the continuous functions $[0,1] \to \R$ satisfying $f(0)=f(1)=0$ (and note that this produces $V_\omega$ in \eqref{eq:fibonacciPotDef} that are \emph{continuous}).

\begin{theorem}\label{t:counterexample}
There exist $f_0 \neq f_1 \in C_0([0,1])$, $E \in \R$, and $\lambda_k \uparrow \infty$ such that
\begin{equation} \label{eq:main:counterex}
E \in \Sigma_{\lambda_k}\text{ and } \dim_\Hd^\loc(\Sigma_{\lambda_k},E) =1 \quad
\forall k.
\end{equation}
\end{theorem}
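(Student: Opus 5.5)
The plan is to choose $f_0 \equiv 0$ and an energy $E$ at which the free transfer matrix over a unit interval is $\pm\idty$. That choice forces the Fricke--Vogt invariant to vanish at $E$ for \emph{every} coupling. The local dimension statement should then follow from the trace map machinery used in \cite{DFG2014,FillmanMei2018AHP}.

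\textbf{Step 1 (choice of $f_0$ and $E$).} Take $f_0 = 0 \in C_0([0,1])$, so that $\lambda f_0 = 0$ for all $\lambda$. Take $E = (\pi m)^2$ for some fixed $m \in \N$. The transfer matrix of $-\psi''=E\psi$ over $[0,1]$ is
\[ \begin{pmatrix} \cos\sqrt{E} & \sin\sqrt{E}/\sqrt{E} \\ -\sqrt{E}\sin\sqrt{E} & \cos\sqrt{E}\end{pmatrix} = (-1)^m\idty . \]
Consequently, for every $\lambda$ and every $f_1$, the matrices $M_0(E)$ and $M_1(E,\lambda)$ commute. The Fricke--Vogt invariant $\tr(M_1M_0M_1^{-1}M_0^{-1})-2$ therefore vanishes at $E$.

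\textbf{Step 2 ($E\in\Sigma_{\lambda_k}$).} The Fibonacci transfer matrices $M_k$ over the words of the substitution are products of copies of $M_1$ and $\pm\idty$. Hence $M_k = \pm M_1^{F_k}$ with $F_k$ the Fibonacci numbers, and the trace orbit is $x_k = \pm\tr M_1^{F_k}$. This orbit is bounded if and only if $|\tr M_1(E,\lambda)|\le 2$. By the standard characterization of the spectrum via bounded trace orbits (as in \cite{DFG2014}), $E\in\Sigma_\lambda$ if and only if $|\tr M_1(E,\lambda)| \le 2$.

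It therefore suffices to pick $f_1\in C_0([0,1])$, $f_1\neq 0$, such that $\lambda\mapsto \tr M_1(E,\lambda)$ enters $(-2,2)$ along a sequence $\lambda_k\uparrow\infty$. Take $f_1\le 0$ to be a nontrivial continuous well vanishing at the endpoints. By WKB/Prüfer-angle asymptotics, for large $\lambda$ the Prüfer angle across the well grows like $\sqrt{\lambda}\int_0^1\sqrt{|f_1|}$, up to bounded corrections near the turning points. Hence the trace takes the form $\tr M_1 = 2\cos\Theta(\lambda)\cdot(1+o(1))$ plus controlled terms, with $\Theta(\lambda)\to\infty$ continuously. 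This forces infinitely many crossings of $0$ and so a sequence $\lambda_k$ with $|\tr M_1(E,\lambda_k)|<2$. One could instead just use an intermediate-value argument on the rotation number, which increases without bound in $\lambda$ at fixed $E$ because the well deepens. Aperiodicity holds since $f_1\neq f_0$.

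\textbf{Step 3 (local dimension $1$).} Fix $k$. The energy curve $E'\mapsto(x_{-1},x_0,x_1)(E')$ passes at $E'=E$ through the invariant surface $S_0$ (Cayley cubic, $I=0$), inside the bounded region. Nearby energies lie on surfaces $S_{I(E')}$ with $I(E')\to 0$. By the Damanik--Gorodetski results on the Fibonacci trace map, the hyperbolic set $\Omega_I$ has stable-manifold thickness/dimension tending to $1$ as $I\to0^+$. The spectrum near $E$ is the intersection of the energy curve with the stable lamination. Provided the curve is transversal to the center-stable manifolds, this yields $\dim_\Hd(\Sigma_{\lambda_k}\cap(E-\delta,E+\delta))\to 1$ as $\delta\to 0$, giving local dimension $1$. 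This mirrors the high-energy argument of \cite{FillmanMei2018AHP}, where $I(E)\to0$ as well.

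\textbf{Main obstacle.} The delicate point is this transversality (and the nondegeneracy of $I$ near $E$, which should satisfy $I(E')>0$ for $E'\neq E$ near $E$). At $E$ the point sits at a singular/degenerate location of $S_0$, since $M_0=\pm\idty$ means $x_{-1}=\pm2$. I would first try the analyticity argument of \cite{FillmanMei2018AHP}: the energy curve is analytic, and tangency on a set of positive measure would force the curve into a stable manifold, contradicting aperiodicity. If needed, I would perturb $\lambda_k$ slightly within the open set where $|\tr M_1|<2$ to avoid exceptional tangencies.
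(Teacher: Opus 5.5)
Your argument is correct in substance and follows the paper's skeleton: $f_0=0$, an energy with $\monodromy_0(E)=\pm\idty$, hence $I(E)=0$ for every $\lambda$, so everything reduces to showing $|\tr\monodromy_1(E,\lambda_k)|\le 2$. You obtain this last condition by a genuinely different and much softer route.

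The paper takes $f_1$ to be a split function and runs a quantitative Pr\"ufer analysis (Lemmas~\ref{lem:B1big}--\ref{l.Mepsminusone}). It shows that $\mathbf{B}_1$ is hyperbolic with $\log\|\mathbf{B}_1\|\gtrsim\sqrt\lambda$ and expanded image close to $e_1$. It shows that $\mathbf{B}_2$ rotates by $\asymp\sqrt\lambda$ while $\log\|\mathbf{B}_2\|\lesssim\lambda^{9/20}$. It then tunes $\lambda$ so that $\mathbf{B}$ maps $U(\mathbf{B})$ onto $S(\mathbf{B})$, which forces $\tr\mathbf{B}=0$.

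Keep the intermediate-value version of your Step~2 and drop the WKB sketch, which is not justified as stated. There are no turning points. However, where $f_1$ vanishes the local wavenumber passes from $\sqrt E$ to order $\sqrt\lambda$ over a layer of length $o(1)$. These endpoint layers are therefore far from adiabatic and contribute a non-rotational factor; controlling them is exactly what the paper's microscopic/mesoscopic/macroscopic splitting is for.

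A clean form of the soft argument runs as follows. The periodic and antiperiodic eigenvalues of $-\partial_x^2+\lambda f_1$ on $[0,1]$ are continuous in $\lambda$. They are bounded below by their $\lambda=0$ values minus $\lambda\|f_1\|_\infty$. They tend to $-\infty$ as $\lambda\to\infty$, by min--max with test functions supported where $f_1\le-\delta$. Hence $\tr\monodromy_1(E,\cdot)$ takes the values $2$ and $-2$ at arbitrarily large $\lambda$, and so vanishes at arbitrarily large $\lambda$.

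This buys simplicity and generality. It only needs $f_1<0$ on some interval, so it covers your negative wells and also split functions without hypothesis~(3). The paper's route instead buys quantitative information: growth of $\|\mathbf{B}\|$ like $\exp(c\sqrt\lambda)$ and rotation of order $\sqrt\lambda$ on the negative part. These explain the mechanism, and the authors regard them as of independent interest.

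As written, your Step~3 is conditional on a transversality you do not prove, but no such argument is needed. Theorem~\ref{t.fiblocaldim} (DFG, Theorem~6.5) gives $\dim_\Hd^\loc(\Sigma;E)=D(I(E))$ at \emph{every} $E\in\Sigma$, with $D(0)=1$. So $E\in\Sigma_{\lambda_k}$ together with $I(E)=0$ finishes the proof at once, which is precisely how the paper concludes. Also, for $|\tr\monodromy_1(E,\lambda)|<2$ the point $\gamma(E)$ is a smooth point of $S_0$, not a singular one.
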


\begin{remark}\mbox{\,}
\begin{enumerate}[label={\rm(\alph*)}]
    \item 
    Let us reiterate that the assumption $f_0 \neq f_1$ implies that the resulting potentials are aperiodic and hence the content of the theorem is nontrivial.
    \item The term ``pseudo band'' for the points in a spectrum of zero measure that have full local Hausdorff dimension was suggested in \cite{BaaJosKra1992PLA}, where the Kronig-–Penney model for the Fibonacci potential was considered; see also \cite{KOLLAR1986203} for related results. In this sense the energies described in Theorem \ref{t:counterexample} can be considered as ``pseudo bands'' that persist for arbitrarily large values of the coupling constant.    
\end{enumerate}

\end{remark}

For any compact set $S$ containing $E$ in its interior, \eqref{eq:main:counterex} implies $\dim_\Hd(\Sigma_{\lambda_k} \cap S) = 1$ for any $k$, so, in particular, \eqref{e.main} fails. 
Nevertheless, a partial result holds under a suitable sign-definiteness assumption.

\begin{theorem} \label{t:main:nwd}
Suppose $f_0 = 0 \cdot \chi_{[0,1)}$, $f_1$ is continuous and nonnegative, and $\{x : f_1(x)=0\}$ is nowhere dense in $[0,1]$. 
In this case, \eqref{e.main} holds.    
\end{theorem}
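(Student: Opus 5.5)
The plan is to follow the trace-map strategy of \cite{DFG2014} and reduce \eqref{e.main} to a uniform lower bound on the Fricke--Vogt invariant. For energy $E$ let $\monodromy_0(E),\monodromy_1(E)$ be the transfer matrices over $[0,1)$ for the pieces $\lambda f_0=0$ and $\lambda f_1$. Set $x_{-1}=\tfrac12\tr\monodromy_1$, $x_0=\tfrac12\tr\monodromy_0=\cos\sqrt{E}$, $x_1=\tfrac12\tr(\monodromy_1\monodromy_0)$, and let $x_{k+1}=2x_kx_{k-1}-x_{k-2}$ be the trace map recursion. The invariant is
\[
I_\lambda(E)=x_{-1}^2+x_0^2+x_1^2-2x_{-1}x_0x_1-1 .
\]
The spectrum $\Sigma_\lambda$ is the set of $E$ whose trace orbit stays bounded. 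From \cite{DFG2014} and \cite{FillmanMei2018AHP} I take as known the local statement: if $I_\lambda(E)\ge I_*$ is large, then the local Hausdorff dimension at $E$ is bounded above by an explicit quantity tending to $0$ as $I_*\to\infty$. This comes from the hyperbolic horseshoe of the trace map on the cubic surface $S_{I}$ together with transversality of the line of initial conditions, or from the Raymond-type covering by band intervals with contraction ratios $\lesssim I^{-1/2}$. It therefore suffices to prove that
\[
\inf_{E\in S\cap\Sigma_\lambda} I_\lambda(E)\to\infty \quad\text{as } \lambda\to\infty ,
\]
plus, if the dimension estimate needs it, some uniformity of derivatives of the trace map's initial line in $E$.

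To estimate $I_\lambda$ on the compact set $S$, I would first use the boundedness of the trace orbit. Every $E\in\Sigma_\lambda$ satisfies $|x_k|\le C(I_\lambda(E))$, which is a standard consequence of the Fibonacci trace map. So it is enough to show that $|x_{-1}|$ or $|x_1|$ is huge whenever $I$ is bounded, and then derive a contradiction.

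The key input is the large-coupling asymptotics for the single piece $\lambda f_1\ge 0$ on $[0,1]$ with $E$ in a compact set. Write the solution as a WKB/Riccati expression with local decay rate $\sqrt{\lambda f_1-E}$. Since the zero set of $f_1$ is nowhere dense and $f_1$ is continuous, there is an interval where $f_1\ge\delta>0$. On that interval the solution grows like $\exp(c\sqrt{\lambda})$; this is the Lyapunov-exponent estimate for periodic operators mentioned in the introduction. I want to show that $\|\monodromy_1\|\ge e^{c\sqrt\lambda}$ and, more importantly, that the relevant traces are large. The danger is cancellation: the trace can be small even when the norm is large, which happens near energies where the periodic operator with piece $\lambda f_1$ has bands. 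The plan for this step is as follows.

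\begin{enumerate}[label=(\roman*)]
\item Use the sign condition together with the fact that $\{f_1=0\}$ is nowhere dense. These imply that the regions where $\lambda f_1<E$ (the classically allowed wells) shrink as $\lambda\to\infty$, and each well is adjacent to barriers of height $\gtrsim\lambda$.
\item Deduce that $\monodromy_1$ factors as a product of near-free propagations across short wells and strongly hyperbolic barrier matrices, up to a uniformly bounded error.
\item Compute $x_{-1}$ and $x_1$ from this factorization. Writing $\monodromy_1\approx e^{\Lambda}u v^{T}$ with $\Lambda\sim c\sqrt\lambda$, we get $x_{-1}\approx\tfrac12 e^{\Lambda}v^{T}u$ and $x_1\approx\tfrac12 e^{\Lambda}v^{T}\monodromy_0u$.
\item Show that $v^Tu$ and $v^T\monodromy_0u$ cannot both be exponentially small. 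This uses the fact that $\monodromy_0$ is a free rotation by $\sqrt E$ and the directions $u,v$ are determined by the slopes at the endpoints.
\end{enumerate}

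Combining these, $I\ge \max(x_{-1},x_1)^2-O(\cdots)$ should tend to infinity. One subtlety is the term $-2x_{-1}x_0x_1$. In the regime where both traces are large I would instead bound $I$ from below using $I=(x_{-1}-x_0x_1)^2+(1-x_0^2)(x_1^2-1)-\dots$.

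I expect the main obstacle to be this cancellation analysis, uniformly in $E\in S$. The nowhere-dense hypothesis is exactly what rules out a flat zero region of positive length, which would act as a resonant well; this is the mechanism presumably behind Theorem~\ref{t:counterexample}. I would first try to prove the needed estimate via Prüfer/rotation-number control, showing that the phase accumulated across $\lambda f_1$ is locked, with $v^T\monodromy_0u$ comparable to $\sin(\sqrt E+\text{bounded phase})$ times a non-degenerate factor. If that fails, I would restrict to $E\in\Sigma_\lambda$ and exploit the recursion to exclude simultaneous smallness.
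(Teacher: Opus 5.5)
\textbf{There is a genuine gap: the only nontrivial step, the uniform blow-up of the trace, is never established.}

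Your reduction is fine. By Theorem~\ref{t.fiblocaldim} it suffices to show $\inf_{E\in S\cap\Sigma_\lambda}I_\lambda(E)\to\infty$. For that it is enough that the relevant traces blow up uniformly on $S$, since large traces force either escape of the trace orbit or a large invariant. You yourself call the cancellation analysis the main obstacle, and you offer only tentative routes: a WKB factorization, phase locking, and a fallback through the recursion. Steps (i)--(ii) are also not justified under the hypotheses. $f_1$ is merely continuous, so there is no derivative control for a WKB/Riccati ansatz with local rate $\sqrt{\lambda f_1-E}$. A nowhere dense zero set can be a fat Cantor set, so $\{\lambda f_1<E\}$ may have infinitely many components, and there is no finite well/barrier factorization with uniformly bounded error.

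\textbf{The missing idea: nonnegativity excludes cancellation altogether.} This makes steps (iii)--(iv) and any control of $\tr(\monodromy_1\monodromy_0)$ unnecessary.
\begin{itemize}
\item In the variables $(y',y)^\top$ of \eqref{eq:nonautFlow}, wherever $\lambda f_1-E\ge0$ the closed first quadrant is forward invariant, and norms of vectors in it are nondecreasing, by \eqref{eq:argderivative} and \eqref{eq:normderiv}.
\item Because $\{f_1=0\}$ is nowhere dense, $[0,1]$ can be partitioned, independently of $\lambda$, into intervals of length $<\varepsilon$ alternating with closed intervals on which $f_1>0$.
\item Across a short interval, using only $f_1\ge0$ and $E\le K$, a vector from the quadrant leaves it by an angle at most $s$ and loses at most a factor $2$ in norm.
\item Across an interval where $f_1>0$, for large $\lambda$, the angle is driven back into the cone of arguments in $(0,\pi/2-s)$. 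By comparison with a constant-coefficient system (Lemmas~\ref{lem:autCompare}--\ref{lem:C2growth}), the norm becomes arbitrarily large, uniformly in $E\le K$.
\end{itemize}
Hence both columns of $\monodromy_1$ end in the first quadrant with huge norm. Together with $\det\monodromy_1=1$, this forces $\tr\monodromy_1\to+\infty$ uniformly in $E\le K$; this is Lemma~\ref{lem:traceAsymptoticsLargelambda}. In particular, for large $\lambda$ the set $S$ contains no bands of the periodic operator with piece $\lambda f_1$, so the danger you anticipate never arises.

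\textbf{Finishing the argument.} With $|\tr\monodromy_1|>2K$ and $\|\monodromy_0\|\le M$ on $S$, either $|\tr\monodromy_0\monodromy_1|>2K^{1/3}$ and the orbit escapes, or $I\gtrsim K^2$.

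\textbf{Your fallback fails.} Bounding $I$ from below when both traces are large cannot work. At $E=\pi^2n^2$ one has $\monodromy_0=\pm\idty$, so $I(E)=0$ by \eqref{eq:FVIvanishIFFcommute} however large the traces are. That regime must be excluded by escape, not by the invariant.

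\textbf{A minor indexing slip.} Feeding your triple $(x_{-1},x_0,x_1)$ into $x_{k+1}=2x_kx_{k-1}-x_{k-2}$ generates traces of letter-swapped words. The orbit to use is $\traceMap^k(\gamma(E))$.
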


Figure~\ref{fig:f1pos} shows the lower portion of the (unbounded) spectrum of periodic approximations to two continuum Fibonacci operators.  In both cases $f_0 = 0\cdot \chi_{[0,1)}$.  On the left, $f_1 = 1 \cdot \chi_{[0,1)}$; on the right $f_1(x) = \exp(1+1/((2x-1)^2-1))$, a $C^\infty$ bump function that gives a continuous potential throughout $\R$. 
(We discuss the numerical computations used to create these plots in Section~\ref{sec:nlevp}.)

\begin{figure}[b!]
\includegraphics[width=2.85in]{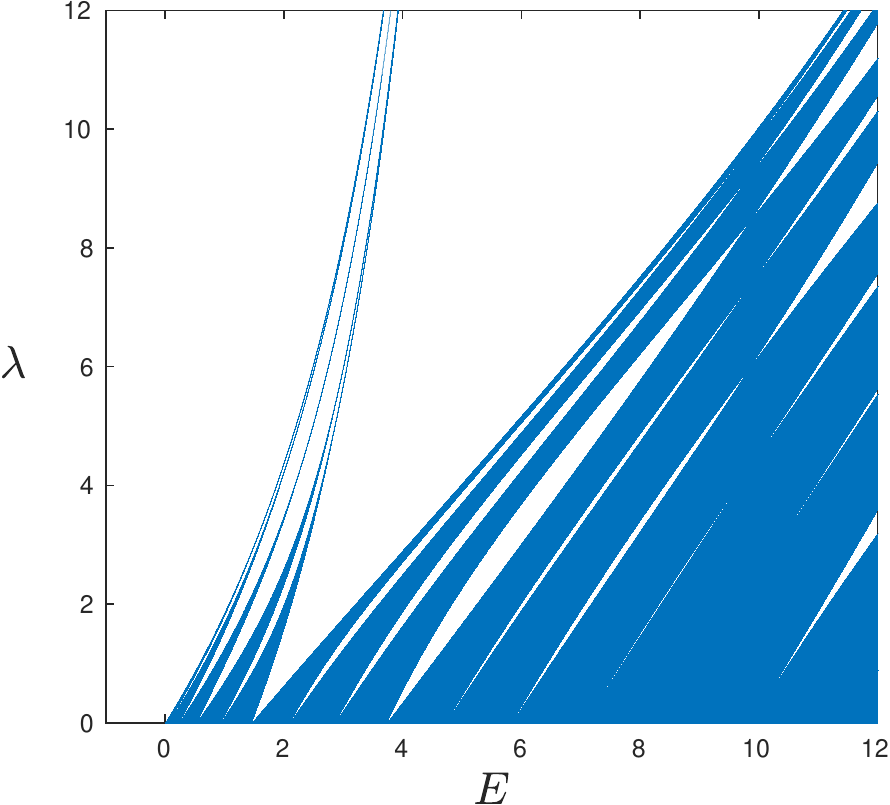}\quad
\includegraphics[width=2.85in]{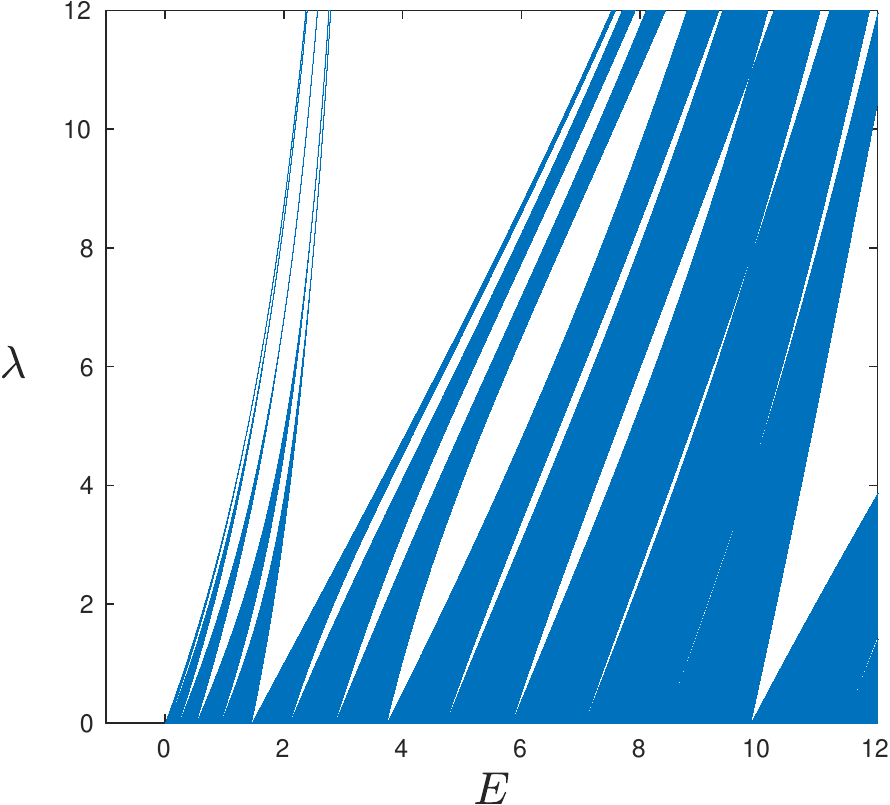}
\caption{\label{fig:f1pos}The lower portion of the spectrum for periodic approximations (period $p=13$) for two continuum Fibonacci operators with $f_0 = 0\cdot \chi_{[0,1)}$, with $f_1$ constant (left) and a $C^\infty$ bump function that is positive for $x\in(0,1)$ (right).  For each value of $\lambda$, the corresponding spectrum is a horizontal slice of the plot.}
\end{figure}

\begin{remark}\label{r.first}
The case in which the set $\{f_1 =0\}$ contains some intervals seems to be more complicated, and if \eqref{e.main} holds in that case, the proof will require some other arguments. 
Indeed, it is not hard to see that in this case the conclusion of Lemma~\ref{lem:traceAsymptoticsLargelambda} no longer holds true for $\varphi = f_1$.
\end{remark}

We give the proofs of the main theorems in Section~\ref{sec:proofs} after recalling some background in Section~\ref{sec:bg}.
The construction of a (family of) counterexamples is quite delicate and requires some surprisingly challenging asymptotics for the Lyapunov exponent and rotation number for the case of periodic Schrodinger operators in the large-coupling regime.
We expect that the asymptotics worked out for this proof may be of interest independent of the main results of the current work.
We need a particular fact about expanding directions of $\SL(2,\R)$ matrices, which we recall in Appendix~\ref{sec:funfact}.\ \ 
In Section~\ref{sec:nlevp} we briefly review Floquet theory to characterize the spectra of periodic approximations, and then show how, for piecewise constant potentials, such spectra can be described as solutions to a parameter-dependent, finite-dimensional nonlinear eigenvalue problem.

\section*{Acknowledgments}  
We thank the American Institute of Mathematics for their support and hospitality during a recent SQuaRE meeting, at which some of this work was done.

\section{Background}\label{sec:bg}

\subsection{Subshifts}
Let $\mathcal{A}$ be a finite set, called the \emph{alphabet}. Equip $\mathcal{A}$ with the discrete topology and endow the \emph{full shift} $\mathcal{A}^\Z := \{(\omega_n)_{n \in \Z} :   \omega_n \in \A \text{ for all } n \in \Z\}$ with the corresponding product topology. The \emph{shift}
\[
[T \omega]_n \eqdef \omega_{n+1},
\quad \omega \in \A^{\Z}, \; n \in \Z,
\]
defines a homeomorphism from $\A^{\Z}$ to itself. A subset $\Omega \subseteq \mathcal{A}^\Z$ is called \emph{$T$-invariant} if $T^{-1} \Omega  = \Omega$. 
Any compact $T$-invariant subset of $\A^\Z$ is called a \emph{subshift}.

We can associate potentials (and hence Schr\"odinger operators) with elements of subshifts as follows. 
For each $\alpha \in \mathcal{A}$, we fix a real-valued function $f_\alpha \in L^2([0,1))$. 
Then, for any $\omega \in \A^\Z$, we define the action of the continuum Schr\"odinger operator $H_\omega$ in $L^2(\R)$ by
\begin{equation}\label{eq:Schrod}
H_\omega
=
- \frac{\rmd^2}{\rmd x^2}+ V_\omega,
\end{equation}
where the potential $V_\omega = V_{\{f_\alpha : \alpha \in \A\}, \omega}$ is given by
\begin{equation} \label{eq:VomegaDef}
V_\omega(x)
= \sum_{n\in\Z} f_{\omega_n}(x-n),
\end{equation}
where, as before, $f_\alpha$ is extended to vanish outside $[0,1)$.
These potentials belong to $L^2_\mathrm{loc, unif}(\R)$ and hence each $H_\omega$ defines a self-adjoint operator on a dense subspace of $L^2(\R)$ in a canonical fashion.

\subsection{The Fibonacci Subshift}

In this paper, we study a special case of the foregoing construction, namely potentials generated by elements of the Fibonacci subshift. In this case, the alphabet contains two symbols, $\A \eqdef \{ 0,1 \}$. The Fibonacci substitution is the map
\[
S(0) = 1, \; S(1) = 10.
\]
This map extends by concatenation to $\A^*$, the free monoid over $\A$ (i.e., the set of finite words over $\A$), as well as to $\A^{\N}$, the collection of (one-sided) infinite words over $\A$. There exists a unique element
\[
u
=
10110 101 10110 \ldots \in \A^{\N}
\]
with the property that $u = S(u)$. It is straightforward to verify that for $n \in \N$, $S^n(1)$ is a prefix of $S^{n+1}(1)$. Thus, one obtains $u$ as the limit (in the product topology on $\mathcal{A}^\mathbb{N}$) of the sequence of finite words $\{ S^n(1) \}_{n \in \N}$. 
The Fibonacci subshift consists of all two-sided infinite words with the same local factor structure as $u$, that is,
\[
\Omega
\eqdef
\{ \omega \in \mathcal{A}^\Z : \text{every finite subword of $\omega$ is also a subword of } u \}.
\]
The reader may notice that this appears to be a different paradigm than the one introduced in \eqref{eq:fib}, but rest assured that these two definitions are compatible.\footnote{The interested reader can consult, for example, \cite[Chapter~10]{DF2024ESO2} or \cite[Chapter~2]{Lothaire2002} for detailed explanations connecting the two perspectives.}
 Given real-valued functions $f_0, f_1 \in L^2([0,1))$, we consider the family of continuum Schr\"odinger operators $\{H_\omega\}_{\omega \in \Omega}$ defined by \eqref{eq:Schrod} and \eqref{eq:VomegaDef}. 
Since $(\Omega,T)$ is a minimal dynamical system, there is a uniform closed set $\Sigma = \Sigma(f_0,f_1) \subseteq \R$ with the property that
\[
\spectrum(H_\omega)
=
\Sigma
\text{ for every }
\omega \in \Omega.
\]

Of course, one can choose $f_0 = f_1$, implying that every $V_\omega$ is a periodic potential, in which case Floquet theory reveals the spectrum to be a union of nondegenerate closed intervals, and hence, in particular, \emph{not} nowhere dense.
The main result of \cite{DFG2014} is that periodicity of the potentials $V_\omega$ is the only possible obstruction to Cantor spectrum. 
We will later specialize to the case $f_0\equiv 0$.
In this case, to ensure that each $V_\omega$ is aperiodic, it suffices to insist that $f_1 \not\equiv 0$  in $L^2$ (i.e., $f_1$ does not vanish a.e.).

\begin{theorem}[{DFG \cite[Corollary~5.5]{DFG2014}}]
Let $\Omega$ denote the Fibonacci subshift over $\A = \{0,1\}$. If the potential pieces $f_0$ and $f_1$ are chosen so that $V_\omega$ is aperiodic for one $\omega \in \Omega$ {\rm(}hence for every $\omega \in \Omega$ by minimality{\rm)}, then $\Sigma$ is an extended Cantor set\footnote{That is, a closed (but not necessarily compact) perfect and nowhere dense set.} of zero Lebesgue measure.
\end{theorem}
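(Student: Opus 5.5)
The plan follows the trace-map strategy from the discrete Fibonacci Hamiltonian, adapted to the continuum. Since this is the cited result of \cite{DFG2014}, I only indicate the route. The goal is to show that $\Sigma$ coincides with the set $\mathcal{Z}$ of energies where the Lyapunov exponent $\gamma(E)$ vanishes, and then to use Kotani theory to show $\mathcal{Z}$ is Lebesgue-null.

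\textbf{Trace map.} For $E \in \R$, let $M_j(E)$ denote the transfer matrix of $-\psi''+V\psi=E\psi$ across the piece $f_j$ on $[0,1)$, $j=0,1$. Let $M_k(E)$ (indexed by $k$) denote the transfer matrix across the word $S^k(1)$, placed on $[0,|S^k(1)|)$. The substitution $S^{k+1}(1)=S^k(1)S^{k-1}(1)$ gives the recursion $M_{k+1}=M_{k-1}M_k$, with the order fixed by the direction of propagation. The traces $x_k=\tr M_k(E)$ then obey the Fibonacci trace map $x_{k+1}=x_kx_{k-1}-x_{k-2}$. They also conserve the Fricke--Vogt invariant
\[
I(E)=x_{k+1}^2+x_k^2+x_{k-1}^2-x_{k+1}x_kx_{k-1}-4 .
\]
In the continuum this invariant depends on $E$. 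It equals the trace of the commutator $[M_0(E),M_1(E)]$ minus $2$, so it is nonnegative for real $E$. It vanishes identically in $E$ only if $M_0$ and $M_1$ commute for all $E$. Using high-energy asymptotics of the $M_j$, I would show that this forces $f_0=f_1$, contradicting aperiodicity. Since $I$ is real-analytic in $E$, the set $\{I(E)=0\}$ is therefore discrete.

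\textbf{Identifying $\Sigma$ with $\mathcal{Z}$.} Following Sütő's argument, the periodic approximants with period word $S^k(1)$ have spectra $\sigma_k=\{E:|x_k(E)|\le 2\}$. I would show that $\Sigma$ equals the set $B$ of energies whose trace orbit $(x_k(E))$ is bounded. Away from $B$ the orbit escapes superexponentially, and then $\gamma(E)>0$; on $B$ one has $\gamma(E)=0$. Combining these gives $\Sigma=\mathcal{Z}=B$. This step needs a continuum version of the Damanik--Lenz uniform-convergence argument for the cocycle over the uniquely ergodic Fibonacci subshift. That argument guarantees that positive exponent means uniform hyperbolicity, hence a spectral gap.

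\textbf{Zero measure and the Cantor property.} Each $V_\omega$ takes only finitely many local patterns and is aperiodic, so Kotani's theorem for such ergodic families (in its continuum form) gives $\mathrm{Leb}(\mathcal{Z})=0$. Hence $\mathrm{Leb}(\Sigma)=0$. A closed set of zero measure is nowhere dense. For perfectness, note that an isolated point of $\Sigma$ would be an eigenvalue of every $H_\omega$. Gordon's lemma, applied via the cube repetitions $S^k(1)^3$ present in Fibonacci sequences together with the bounded traces on $\Sigma$, rules out $L^2$ eigenfunctions. Therefore $\Sigma$ has no isolated points, and it is an extended Cantor set.

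\textbf{Main obstacle.} I expect the hardest step to be the identification $\Sigma=B$ in the unbounded continuum setting. The escape criterion from the discrete case (once $|x_k|,|x_{k+1}|>2$ and $|x_{k-1}|\le 2$, the orbit diverges) must be made uniform even though $I(E)$ is unbounded as $E\to\infty$. I would handle this by working on compact energy windows and using local uniform continuity of $E\mapsto M_j(E)$.
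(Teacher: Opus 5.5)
Your main line is correct and is essentially the route behind the cited result. The paper itself gives no proof and simply invokes \cite[Corollary~5.5]{DFG2014}. The steps of that route are:
\begin{enumerate}
\item Show $\Sigma=\mathcal Z$. The inclusion $\mathcal Z\subseteq\Sigma$ always holds, and a uniformity argument over the Fibonacci subshift forces the Lyapunov exponent to vanish on $\Sigma$.
\item Get $\operatorname{Leb}(\mathcal Z)=0$ from Kotani theory. The nontrivial input is that aperiodic potentials built from finitely many unit-length pieces are non-deterministic.
\item Conclude: a closed null set is nowhere dense, and ergodicity excludes isolated points.
\end{enumerate}

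The step you flag as the main obstacle is not needed for this statement. Because the pieces have unit length, $\omega\mapsto\transferMatrix_{[0,1]}(f_{\omega_0},E)$ is a locally constant $\SL(2,\R)$-cocycle over $(\Omega,T)$. The Damanik--Lenz/Lenz uniformity theorem for subshifts with the Boshernitzan property therefore applies verbatim and gives $\Sigma=\mathcal Z$ without going through $B$. The identification $\Sigma=B$ (Proposition~\ref{p.sigmab}) is what the dimension results need, not the Cantor/zero-measure statement.

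Your Gordon-lemma argument for perfectness works once you note that a single $\omega$ with suitably centered cubes suffices, since an isolated point would be an eigenvalue of \emph{every} $H_\omega$. Continuity of the integrated density of states, whose support is $\Sigma$, rules out isolated points more directly.

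You should delete or correct the paragraph on the invariant. It contains a false claim, and nothing in it is used afterwards.
\begin{itemize}
\item It is \emph{not} true that $I(E)\ge 0$ for all real $E$ merely because $I$ is a commutator trace minus $2$. For $\mathbf A=\begin{bmatrix}1&1\\1&2\end{bmatrix}$ and $\mathbf B=\begin{bmatrix}1&-1\\-1&2\end{bmatrix}$ in $\SL(2,\R)$ one has $\tr(\mathbf A\mathbf B\mathbf A^{-1}\mathbf B^{-1})=-2$. Nonnegativity everywhere is special to the discrete model. In the continuum one only knows $I\ge 0$ on $\Sigma$, and that is a theorem (Proposition~\ref{p.nonneginv}). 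The paper explicitly says negativity off the spectrum cannot be ruled out a priori.
\item $\tr(\mathbf A\mathbf B\mathbf A^{-1}\mathbf B^{-1})=2$ gives only a common eigenvector, not commutation. Passing from $I\equiv 0$ to $f_0=f_1$ requires the elliptic-interval-plus-analyticity argument and then Borg--Marchenko, as in Proposition~\ref{prop:IEnonconstant}. It is not clear that high-energy asymptotics alone would identify $f_0$ with $f_1$.
\end{itemize}
None of this, nor the discreteness of $\{I=0\}$, enters the proof of zero measure or of the Cantor property, so the argument survives once the paragraph is removed.
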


Let us also inductively define $\ell_k>0$ and $f_{k} \in L^2([0,\ell_k))$ by
\begin{align} \label{eq:ellkDef}
\ell_0 & = \ell_1  = 1, \quad \ell_k = \ell_{k-1} + \ell_{k-2} \\
\label{eq:fkDef}
f_{k}(x)
& =
\begin{cases}
f_{k-1}(x), & 0 \leq x < \ell_{k-1}; \\
f_{k-2}(x-\ell_{k-1}), & \ell_{k-1} \leq x < \ell_{k}.
\end{cases}
\end{align}

\subsection{Trace Map, Invariant, and Local  Dimension of the Spectrum}\label{ss.definitions} 

The spectrum (and many spectral characteristics) of the continuum Fibonacci model can be encoded in terms of an associated polynomial diffeomorphism of $\R^3$, called the \emph{trace map}. 
Let us make this correspondence explicit, following \cite{DFG2014, FillmanMei2018AHP}. 

To begin, we need to set up some notation.
Consider the differential equation
\begin{equation} \label{eq:TimeIndepSchrod}
-y''(x) + f(x) y(x)
=
E y(x),\quad
E \in \C, \quad x \in I,
\end{equation}
where $I \subseteq \R$ is an interval and $f \in L^2_{\loc}(I)$.
Given $a \in I$, we write $u_{f,E,a},v_{f,E,a}$ for the solutions of \eqref{eq:TimeIndepSchrod} satisfying the initial conditions 
\begin{equation}
u_{f,E,a}(a) = v_{f,E,a}'(a)=1 \text{ and } u_{f,E,a}'(a) = v_{f,E,a}(a)=0.
\end{equation}
For $a,b \in I$, we then define the transfer matrix $\transferMatrix_{[a,b]}(f,E)$ by
\begin{equation} \label{eq:TransMatDef}
    \transferMatrix_{[a,b]}(f,E) \eqdef
    \begin{bmatrix}
        v'_{f,E,a}(b)
        & u'_{f,E,a}(b)  \\
         v_{f,E,a}(b) &  
        u_{f,E,a}(b)
    \end{bmatrix},
\end{equation}
and note that $\transferMatrix_{[a,a]}(f,E) = \idty$, $\transferMatrix_{[b,c]}(f,E)\transferMatrix_{[a,b]}(f,E)= \transferMatrix_{[a,c]}(f,E)$ (by existence and uniqueness of solutions of \eqref{eq:TimeIndepSchrod}), and $\det \transferMatrix_{[a,b]}(f,E) = 1$ for any $a,b,c,f,E$.

Returning to the Fibonacci setting, we define the monodromy matrices by
\begin{align*}
\monodromy_k(E)
& {\eqdef}
\transferMatrix_{[0,\ell_k]}(f_k,E),
\quad
k \in \Z_{\geq 0}, \; E \in \C,
\end{align*}
with $\ell_k$ and $f_k$ as in \eqref{eq:ellkDef}--\eqref{eq:fkDef}.
Their half-traces are denoted by
\[
x_{k}(E)
\eqdef
\frac{1}{2}
\tr(\monodromy_k(E))
=
\frac{1}{2}\left( u_{f_k,E,0}(\ell_k) + v'_{f_k,E,0}(\ell_k) \right),
\quad
k \in \Z_{\geq 0}, \; E \in \R.
\]
From the definitions (and existence and uniqueness of solutions of \eqref{eq:TimeIndepSchrod}), we note that
\[
\monodromy_{k}(E)
=
\monodromy_{k-2}(E)\monodromy_{k-1}(E),
\]
which, with the help of the Cayley--Hamilton theorem, implies
\begin{equation} \label{eq:traceRecursion}
    x_{k+1}  = 2x_kx_{k-1} - x_{k-2}
\end{equation}
for all relevant $k$.
We can encode this recursion by a polynomial map $\R^3 \to \R^3$ as follows: the \emph{trace map} is defined by
$$
\traceMap(x,y,z)
\eqdef
(2 xy-z,x,y),
\quad
x,y,z \in \R,
$$
and, in view of \eqref{eq:traceRecursion}, one has
\begin{equation}
    \traceMap^k(x_2,x_1,x_0)
    = (x_{k+2}, x_{k+1}, x_k),\quad
    k \geq 0.
\end{equation}
Thus, the function $\gamma:\R \to \R^3$ given by $\gamma(E) \eqdef (x_{2}(E), x_1(E), x_0(E))$ is known as the \emph{curve of initial conditions}.

The map $\traceMap$ is known to have a first integral given by the so-called \emph{Fricke--Vogt invariant}, defined by
$$
I(x,y,z) \eqdef x^2 + y^2 + z^2 - 2 xyz -1,
\quad
x,y,z \in \R.
$$
More precisely, $I\circ \traceMap = I$, so $\traceMap$ preserves the level surfaces of $I$:
$$
S_V
\eqdef
\{ (x,y,z) \in \R^3 : I(x,y,z) = V \},
\quad V \in \R.
$$
Consequently,
 every point of the form $\traceMap^k(x_{2}(E), x_1(E), x_0(E))$ with $k \in \Z_{\geq 0}$ lies on the surface $S_{I(\gamma(E))}$. 
 For the sake of convenience, we put
$$
I(E)
\eqdef
I(\gamma(E))
=
I(x_{2}(E), x_1(E), x_0(E)),
$$
with a minor abuse of notation. Putting everything together:
\begin{equation} \label{eq:tracemapInvariant}
   I(x_{k+1}(E), x_k(E), x_{k-1}(E)) = I(\gamma(E))  
\end{equation}
for every $E$ and every relevant $k$.

When $V<0$, the set $S_V$ has five connected components: one compact connected component that is diffeomorphic to the 2-sphere $S^2$, and four unbounded connected components, each of which is diffeomorphic to the open unit disk. 
When $V = 0$, each of the four unbounded components meets the compact component, forming four conical singularities. 
As soon as $V >0$, the singularities resolve; for such $V$, the surface $S_V$ is smooth, connected, and diffeomorphic to the four-times-punctured 2-sphere.

The trace map is important in the study of operators of the type \eqref{eq:Schrod}, as its dynamical spectrum, defined by
$$
B
\eqdef
\set{ E \in \R : \{ \traceMap^k(\gamma(E)) : k \in \Z_{\geq 0} \} \text{ is bounded} },
$$
encodes the operator-theoretic spectrum of $H$, which was first proved by S\"{u}t\H{o} in the discrete setting \cite{Suto1989JSP}.

\begin{prop}[{DFG \cite[Proposition~6.3]{DFG2014}}]
\label{p.sigmab}
$\Sigma = B$.
\end{prop}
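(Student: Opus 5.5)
We prove the two inclusions separately, following S\"ut\H{o}'s argument for the discrete Fibonacci Hamiltonian. The tools are Floquet theory for the periodic approximants, the trace recursion \eqref{eq:traceRecursion}, and the invariant \eqref{eq:tracemapInvariant}.

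For each $k$, let $\Sigma_k$ denote the spectrum of the operator with the $\ell_k$-periodic potential obtained by repeating $f_k$. By Floquet theory,
\[
\Sigma_k = \{E \in \R : |x_k(E)| \le 1\}.
\]

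We first record the escape criterion. For $E$ real, if for some $k \ge 1$ we have $|x_{k+1}(E)| > 1$ and $|x_{k}(E)| > 1$, then $|x_{j}(E)|$ grows superexponentially for $j > k$, so $E \notin B$. This is the same argument as in the discrete case and uses only the recursion \eqref{eq:traceRecursion}. The energy dependence of $I(E)$ plays no role here, because the argument is pointwise in $E$. As a consequence, for $E \in B$, at least one of any two consecutive half-traces has modulus at most $1$. Combined with the invariant \eqref{eq:tracemapInvariant}, this shows that on $B$ all $|x_k(E)|$ are bounded by a constant $C(E)$ depending only on $I(E)$. Since $I$ is continuous, this bound is locally uniform in $E$.

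To show $B \subseteq \Sigma$, fix $E \in B$. By the dichotomy above, $E \in \Sigma_k \cup \Sigma_{k+1}$ for every $k$, so $E$ lies in $\Sigma_k$ for infinitely many $k$. Every $\omega \in \Omega$ is a limit of the periodic words $(S^k(1))^\infty$ in the sense of local factors, so the periodic potentials converge to $V_\omega$ locally uniformly in $L^2_{\loc}$. Strong resolvent convergence then gives
\[
\Sigma \subseteq \bigcap_{n} \overline{\bigcup_{k\ge n}\Sigma_k},
\]
but this is the wrong direction for our purpose. Instead we build Weyl sequences. For $E \in \Sigma_k$, the Floquet solution of period $\ell_k$ (or its antiperiodic analogue) is bounded, with $\|\monodromy_k\|$ controlled. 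Using the uniform bounds $|x_j(E)| \le C(E)$, which yield power-law bounds on $\|\monodromy_j(E)\|$ via the standard Iochum--Testard/Damanik--Lenz type argument, we obtain polynomially bounded generalized eigenfunctions for $H_\omega$. Sch'nol's theorem then gives $E \in \Sigma$. Alternatively, and more cleanly, we can use the Gordon-type/three-block argument: the bounded traces give bounded solutions, hence $E \in \Sigma$ by Sch'nol.

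For the reverse inclusion $\Sigma \subseteq B$, suppose $E \notin B$. Then $E$ satisfies the escape criterion at some $k$, and the criterion is an open condition. Hence on a neighborhood $U$ of $E$, $|x_j| \to \infty$ uniformly, so $U$ is disjoint from $\Sigma_j$ for all large $j$. We now use that $\Sigma$ is contained in the set of accumulation points of $\Sigma_j$: by strong resolvent convergence of the periodic approximants to $H_\omega$ for a suitable $\omega$, every spectral point of the limit is approximated by spectral points of the approximants. This gives $U \cap \Sigma = \emptyset$.

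The main obstacle is the inclusion $B \subseteq \Sigma$. In the continuum setting we must turn bounded traces into control on solutions uniformly across the whole line. This requires bounding $\|\monodromy_j(E)\|$, not just the traces, and the unbounded operator $H_0$ rules out the simplest discrete estimates. I would handle this with the partition-based norm bound from the discrete theory, which carries over verbatim to products of $\SL(2,\R)$ matrices with bounded half-traces, and then invoke Sch'nol's theorem.
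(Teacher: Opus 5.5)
The paper does not prove this proposition; it quotes it from \cite{DFG2014}. I have therefore checked your argument on its own terms. There is a genuine gap: your escape criterion is false as stated.

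\textbf{The escape criterion.} You claim that $|x_k|>1$ and $|x_{k+1}|>1$ force superexponential growth ``using only the recursion.'' This fails. Take $\alpha=(\sqrt5-1)/2$ and $c\neq0$, and set $x_k=\cosh\bigl(c(-\alpha)^k\bigr)$. Since $e_k=c(-\alpha)^k$ satisfies $e_{k+1}=e_k+e_{k-1}$, and $2\cosh a\cosh b=\cosh(a+b)+\cosh(a-b)$, this sequence satisfies \eqref{eq:traceRecursion}. Every term exceeds $1$, yet $x_k\to1$.

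In Sütő's discrete setting the unconditional version holds only because of the anchor $x_{-1}=1$. That anchor lets you run back to the last index with $|x|\le1$, where a three-term criterion applies. In the continuum model, $x_{-1}(E)=\tfrac12\tr\bigl(\monodromy_1(E)\monodromy_0(E)^{-1}\bigr)$ is unconstrained. The orbit above is exactly what you get when $\monodromy_0(E)$ and $\monodromy_1(E)$ are commuting hyperbolic matrices with $\monodromy_1=\monodromy_0^{-\alpha}$. So your dichotomy ``$E\in\Sigma_k\cup\Sigma_{k+1}$ for every $k$'' is unjustified on $B$, and it is false along such an orbit.

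\textbf{The inclusion $B\subseteq\Sigma$.} This direction survives, because you never needed the dichotomy: $\sup_k|x_k(E)|<\infty$ is the definition of $B$. Your main route is sound:
\begin{itemize}
\item the identity $\monodromy_{k+1}=2x_k\monodromy_{k-1}-\monodromy_{k-2}^{-1}$ gives $\|\monodromy_k(E)\|\le C^k$;
\item a block decomposition for a suitable $\omega$ then gives power-law bounds on $(u,u')$ on both half-lines;
\item the easy direction of Sch'nol's theorem finishes the argument.
\end{itemize}
Drop the ``Gordon/three-block'' alternative, though. Gordon-type arguments only exclude decaying solutions, and bounded traces do not yield bounded solutions.

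\textbf{The inclusion $\Sigma\subseteq B$.} Here the gap is real. Your criterion is an open condition, but it does not force $|x_j(E')|>1$ for large $j$ at nearby energies $E'$. So the step ``$U\cap\Sigma_j=\emptyset$ for all large $j$'' is unjustified.

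The fix is to use the three-term criterion $|x_k|>1$, $|x_{k+1}|>1$, $|x_{k+1}|>|x_{k-1}|$. It is open in $E$. By induction, $|x_{j+2}|>|x_{j+1}|(2|x_j|-1)\ge|x_{j+1}||x_j|$ for all $j\ge k$. Hence $|x_j|>1$ for every $j\ge k$ throughout a neighborhood of $E$.

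Showing that an unbounded orbit meets this criterion needs the invariant, not just the recursion. Pick a record index $m+1$ (with $m\ge1$) such that $|x_{m+1}|>\max_{j\le m}|x_j|$ and $|x_{m+1}|>2+\sqrt{\max(1+I(E),0)}$.
\begin{itemize}
\item If $|x_m|>1$, the criterion holds at $m$.
\item If $|x_m|\le1$, then $I(x_{m+2},x_{m+1},x_m)=I(E)$ gives $(|x_{m+2}|-|x_{m+1}|)^2\le1+I(E)$. Hence $|x_{m+2}|>1\ge|x_m|$, and the criterion holds at $m+1$.
\end{itemize}
With this in place, your strong resolvent convergence argument completes the inclusion. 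Apply it to an $\omega$ obtained by shifting and compactness, not to ``every'' $\omega$.
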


There are several substantial differences between the continuum setting and the discrete setting. First, in the discrete case, the Fricke--Vogt invariant is constant (viewed as a function of $E$, $I = I(E)$). However, the invariant may enjoy nontrivial dependence on $E$ in the continuum setting, which is demonstrated by examples in \cite{DFG2014}. This dependence is related to new phenomena that emerge in the continuum setting and make its study worthwhile.
Moreover, we will show in Proposition~\ref{prop:IEnonconstant} that such dependence is an unavoidable feature of the continuum setting: as soon as the potentials are aperiodic, the function $I$ \emph{must} be nonconstant.

Second, the Fricke--Vogt invariant is always nonnegative in the discrete setting (even away from the spectrum), but one cannot \textit{a priori} preclude negativity of $I$ in the continuum setting. However, it is proved in \cite{DFG2014} that any energies for which $I(E) < 0$ must lie in the resolvent set of the corresponding continuum Fibonacci Hamiltonian.

\begin{prop}[{DFG \cite[Proposition~6.4]{DFG2014}}] \label{p.nonneginv}
For every $E \in \Sigma$, one has $I(E) \ge 0$.
\end{prop}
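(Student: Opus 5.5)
The statement to prove is Proposition~\ref{p.nonneginv}: if $E\in\Sigma$, then $I(E)\ge 0$. We argue by contraposition. Suppose $I(E)<0$; we show that the orbit $\{\traceMap^k(\gamma(E))\}_{k\ge0}$ is unbounded. By Proposition~\ref{p.sigmab} this gives $E\notin\Sigma$.

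The starting point is the geometry of $S_V$ for $V<0$. Here $S_V$ has one compact component $\mathcal{K}_V$, contained in the cube $[-1,1]^3$, and four unbounded components. The point $\gamma(E)$ lies on one of these components, and since $\traceMap$ is a homeomorphism of $S_V$, it permutes them.

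The first key step is to rule out that $\gamma(E)$ lies on the compact component. Every point of $\mathcal{K}_V$ has all coordinates in $[-1,1]$, and in fact in $(-1,1)$, since $|x|=1$ would force $I=(y\mp z)^2\ge0$. So it suffices to show that $|x_0(E)|<1$, $|x_1(E)|<1$ and $|x_2(E)|<1$ cannot all hold together with $I<0$. For real $E$ the matrices $\monodromy_k(E)$ lie in $\SL(2,\R)$. The standard identity
\[
I(x,y,z)=\tfrac14\tr\bigl(ABA^{-1}B^{-1}\bigr)-\tfrac12,
\]
with $x=\tfrac12\tr(AB)$, $y=\tfrac12\tr A$, $z=\tfrac12\tr B$, gives
\[
I(E)=\tfrac14\tr[\monodromy_1,\monodromy_0]-\tfrac12 ,
\]
since $\monodromy_2=\monodromy_0\monodromy_1$. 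So $I<0$ means $\tr[\monodromy_1,\monodromy_0]<2$. For $\SL(2,\R)$ matrices $A,B$ with $|\tr A|,|\tr B|<2$, i.e.\ both elliptic, one can conjugate $A$ to a rotation by some angle $\theta$. A direct computation then gives
\[
\tr[A,B]-2=\sin^2\theta\,\Bigl(\text{a sum of squares in the entries of } B-\text{rotation part}\Bigr)\ge 0 .
\]
Equivalently, $\tr[A,B]\ge2$ for real matrices whenever $A$ is elliptic. This is the classical fact that a real commutator with an elliptic entry has trace at least $2$. It contradicts $I<0$, so $\gamma(E)$ is not on $\mathcal{K}_V$.

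It remains to show that orbits on the four unbounded components escape to infinity. Since $\traceMap^{-1}$ also preserves $S_V$, the compact component is invariant. The three sign patterns $(+,+,+)$, $(+,-,-)$, etc.\ label the unbounded components, which $\traceMap$ permutes with period dividing $6$. On these components $|x|,|y|,|z|\ge1$ and the signs are fixed. I would show that $\traceMap^6$, or more concretely $|x_{k+1}|$, grows. If $|x_k|,|x_{k-1}|\ge1$ with a consistent sign product, then
\[
|x_{k+1}|=|2x_kx_{k-1}-x_{k-2}|\ge 2|x_k||x_{k-1}|-|x_{k-2}| .
\]
Combining this with the invariant gives $|x_{k+1}|>|x_k|$ strictly, with a definite gap. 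The cleanest route is Sütő/Casdagli's escape criterion: if $|x_k|>1$ and $|x_{k+1}|>|x_k x_{k-1}|$, then $|x_{k}|\to\infty$ super-exponentially. One verifies that on the unbounded components of $S_V$ with $V<0$ this criterion is triggered after at most a few steps, because the sign constraint forces $2x_kx_{k-1}$ and $x_{k-2}$ to have opposite effective signs.

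The main obstacle is this last step. One has to check the escape criterion carefully using only $I<0$ and the sign structure, and possibly use the fact that the minimum of $|x|$ on an unbounded component is strictly greater than $1$ when $V<0$ (again because $|x|=1$ forces $I\ge0$). Once unboundedness is established, Proposition~\ref{p.sigmab} yields $E\notin\Sigma$.
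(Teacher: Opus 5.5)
Your first step is correct: since $\monodromy_{k-1}(E),\monodromy_k(E)\in\SL(2,\R)$, the identity $\tr(ABA^{-1}B^{-1})-2=\sin^2\theta\,\bigl((a-d)^2+(b+c)^2\bigr)$ for $A$ conjugate to a rotation, together with $I=(y\mp z)^2$ when $|x|=1$, shows that $I(E)<0$ forces $|x_k(E)|>1$ for \emph{every} $k$. So the compact component is excluded pointwise in $E$, with no measure-theoretic input.

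The gap is the second step, which you only assert, and the mechanism you propose for it is wrong. With all moduli $>1$, $I<0$ forces $x_{k+1}x_kx_{k-1}>0$ for each consecutive triple. There are four unbounded components, and $\traceMap$ permutes them cyclically. So $2x_kx_{k-1}$ and $x_{k-2}$ have the \emph{same} sign, and $a_k:=|x_k|$ satisfies $a_{k+1}=2a_ka_{k-1}-a_{k-2}$. The sign structure thus produces cancellation, not reinforcement, which is exactly why escape is not automatic.

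Several of your other claims also fail:
\begin{enumerate}[label={\rm(\roman*)}]
\item $a_{k+1}>a_k$ is false in general.
\item There is no ``at most a few steps'' mechanism. Since $R\traceMap R=\traceMap^{-1}$ with $R(x,y,z)=(z,y,x)$, running an escaping orbit backwards yields points for which $a_{k-2}>a_ka_{k-1}$, so $(a_k)$ decreases, for arbitrarily many initial steps.
\item The infimum of $|x|$ over an unbounded component is $1$. Take $x=1+\delta$ and $y=z$ with $y^2=1+\frac{\delta}{2}+\frac{|V|}{2\delta}$. So the auxiliary fact you mention is false. What is true is only that $S_V$ stays away from the single point $(1,1,1)$, where $I=0$.
\end{enumerate}

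That last observation is the missing idea, and it closes the argument using essentially your own criterion. First, $I(a_{k+1},a_k,a_{k-1})=I(x_{k+1},x_k,x_{k-1})=V$, because the product of the coordinates is positive. Now split into two cases.
\begin{enumerate}[label={\rm(\alph*)}]
\item Suppose $a_{k-2}\le a_ka_{k-1}$ for some $k\ge2$. Then $a_{k+1}=2a_ka_{k-1}-a_{k-2}\ge a_ka_{k-1}>a_{k-1}$, hence $a_{k-1}<a_{k+1}a_k$. Inductively $a_{n+1}\ge a_na_{n-1}$ for all $n\ge k$. Since $\log a_{k-1},\log a_k>0$, this gives $\log a_n\to\infty$.
\item Suppose instead $a_{k-2}>a_ka_{k-1}$ for all $k\ge2$. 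Since all $a_j>1$, this gives $a_{k-2}>\max(a_{k-1},a_k)$, so $(a_j)$ is strictly decreasing with a limit $L\ge1$. The limit satisfies $L\ge L^2$, so $L=1$. But then $V=I(a_{k+1},a_k,a_{k-1})\to I(1,1,1)=0$, contradicting $V<0$.
\end{enumerate}
Hence the trace orbit is unbounded, and Proposition~\ref{p.sigmab} gives $E\notin\Sigma$.
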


To study the fractal dimension of the spectrum, we will use the following theorem from \cite{DFG2014}, which relates local fractal characteristics near an energy $E$ in the spectrum to the value of the invariant at $E$.

\begin{theorem}[{DFG \cite[Theorem~6.5]{DFG2014}}] 
\label{t.fiblocaldim}
There exists a continuous map $D : [0,\infty) \to (0,1]$ that is real-analytic on $\R_+$ with the following properties:
\begin{itemize}

\item[{\rm (i)}] $\dim_\Hd^\loc(\Sigma;E) = D(I(E))$ for each $E \in \Sigma$;

\item[{\rm (ii)}] We have $D(0) = 1$ and $1 - D(I) \sim \sqrt{I}$ as $I \downarrow 0$;

\item[{\rm (iii)}] We have
$$
\lim_{I \to \infty} D(I) \cdot \log I = 2 \log (1 + \sqrt{2}).
$$

\end{itemize}
\end{theorem}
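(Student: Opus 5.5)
The plan is to transfer the problem to the hyperbolic dynamics of $\traceMap$ on the invariant surfaces $S_V$ and to reuse the dimension theory already available for the discrete Fibonacci Hamiltonian. For $V>0$, the restriction of $\traceMap$ to $S_V$ has a nonwandering set $\Lambda_V$ that is a locally maximal, compact, hyperbolic invariant set (Casdagli for large $V$, Cantat and Damanik--Gorodetski for all $V>0$). The bounded orbits on $S_V$ are exactly those on the stable set $W^s(\Lambda_V)$. I would then define
\[
D(V) \eqdef \dim_\Hd\bigl(W^s_{\loc}(x,\Lambda_V)\cap\Lambda_V\bigr),
\]
the transversal dimension of the stable lamination; by the symmetry of $\traceMap$ this equals $\tfrac12\dim_\Hd\Lambda_V$. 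Since $\traceMap|_{S_V}$ is area-preserving and real-analytic in $V$, the Manning--McCluskey formula expresses $D(V)$ as the zero of a pressure function depending analytically on $V$. This gives real-analyticity of $D$ on $\R_+$. Properties (ii) and (iii) are statements about $D$ alone, not about the continuum operator. I would quote them from the discrete theory: $1-D(I)\sim\sqrt{I}$ from Damanik--Gorodetski's small-coupling analysis near the Markov surface $S_0$, and the limit in (iii) from the large-coupling asymptotics of Damanik--Embree--Gorodetski--Tcheremchantsev and Damanik--Gorodetski. Finally I set $D(0)=1$ and check continuity at $0$ from (ii).

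For (i), fix $E\in\Sigma$, so $I(E)\ge0$ by Proposition~\ref{p.nonneginv}, and assume first $I(E)=V_0>0$. By Proposition~\ref{p.sigmab}, near $E$ the spectrum is $\{E' : \gamma(E')\in\bigcup_{V}W^s(\Lambda_V)\}$. The complication relative to the discrete case is that $\gamma$ does not lie in a single surface, since $I(\gamma(E'))$ varies with $E'$. Hyperbolicity persists in $V$, so the stable manifolds $W^s(\Lambda_V)$ for $V$ near $V_0$ fit together into a lamination $\mathcal W$ of an open subset of $\R^3$ with two-dimensional leaves. Each leaf is a one-parameter (in $V$) family of stable curves, and the holonomies of $\mathcal W$ are $C^1$. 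This last point is a normal-hyperbolicity statement for the skew product $\traceMap$ over the trivial dynamics in $V$. I would then show that $\gamma$ is transversal to $\mathcal W$ at $\gamma(E)$. With transversality in hand, a $C^1$ holonomy carries $\Sigma$ near $E$ onto a neighborhood in a transversal of $\mathcal W$ inside a single surface $S_{V'}$. For $V'$ close to $V_0$ that transversal has Hausdorff dimension $D(V')$. Letting the neighborhood shrink and using continuity of $D$ gives $\dim_\Hd^\loc(\Sigma;E)=D(I(E))$.

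I expect the main obstacle to be transversality of $\gamma$ to $\mathcal W$. In the discrete case it follows from an explicit computation on a line. Here I would argue by contradiction: a tangency on a dense set would force a whole interval of energies to lie in $B=\Sigma$, contradicting zero Lebesgue measure of $\Sigma$ (Corollary~5.5 of \cite{DFG2014}). More robustly, I would combine real-analyticity of $\gamma$ with the fact that a leaf containing an analytic arc must contain the whole arc, so isolated tangencies can be avoided. The degenerate case $I(E)=0$ is handled separately. The upper bound $\dim_\Hd^\loc(\Sigma;E)\le1=D(0)$ is trivial. For the lower bound, choose energies $E_n\to E$ in $\Sigma$ with $I(E_n)>0$; these exist because $I$ is nonconstant and analytic, so its zeros are isolated. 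The already-proved case gives local dimension $D(I(E_n))\to1$ at these points, which forces local dimension $1$ at $E$.
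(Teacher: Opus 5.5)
The paper gives no proof of this theorem; it is quoted from \cite{DFG2014}, so your proposal has to be judged on its own. The parts that concern $D$ alone are fine. These are: defining $D(V)$ through the Cantor slices of $\Lambda_V$, getting analyticity from the pressure formula, reading off (ii) and (iii) from the discrete Fibonacci asymptotics via $I=\lambda^2/4$, and handling $I(E)=0$ by approximation with nearby spectral points where $I>0$ (in the aperiodic case; for $f_0=f_1$ it is trivial).

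\emph{Gap 1: the holonomy claim in (i).} The argument for $I(E)>0$ rests on the claim that the center-stable lamination $\mathcal W$ has $C^1$ holonomies. That claim is false, and it contradicts the statement you are proving. Take transversals $\tau'\subset S_{V'}$ and $\tau''\subset S_{V''}$. The holonomy along $\mathcal W$ maps $\tau'\cap W^s(\Lambda_{V'})$, which has dimension $D(V')$, onto $\tau''\cap W^s(\Lambda_{V''})$, which has dimension $D(V'')$. Since $D$ is real-analytic and nonconstant (it tends to $1$ at $0$ and to $0$ at $\infty$), these dimensions differ for suitable $V',V''$ in every slab. So the holonomy is not even bi-Lipschitz. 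Taken literally, your argument gives $\dim_\Hd^\loc(\Sigma;E)=D(V')$ for every $V'$ near $I(E)$, which would make $D$ locally constant. Normal hyperbolicity gives $C^1$ leaves, but holonomy across different levels of $I$ is only H\"older.

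What is needed instead is a quantitative squeeze: as a neighborhood $U$ of $E$ shrinks, $\dim_\Hd(\Sigma\cap U)$ should lie within $o(1)$ of $[\min_{I(U)}D,\max_{I(U)}D]$. One way is to cover $\Sigma\cap U$ by the $\gamma$-preimages of level-$n$ Markov cylinders.
\begin{itemize}
\item By transversality and bounded distortion, their lengths are comparable to inverse unstable derivatives of $\traceMap^n$ along orbits in the surfaces $S_V$, $V\in I(U)$.
\item These agree with the corresponding quantities on $S_{I(E)}$ up to factors $e^{\pm n\epsilon}$, with $\epsilon\to0$ as $U$ shrinks.
\item Hence the zero of the pressure moves by $o(1)$, and continuity of $D$ gives (i).
\end{itemize}

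\emph{Gap 2: transversality.} You rightly identify transversality as the crux, but neither of your arguments establishes it.
\begin{itemize}
\item A tangency of $\gamma$ with a leaf at a single point does not put any interval of energies into $B$. So zero Lebesgue measure of $\Sigma$ does not rule it out.
\item Analyticity only excludes $\gamma$ containing an arc of a leaf. It does not exclude tangency with one of the uncountably many leaves, nor accumulation of tangencies coming from different leaves.
\end{itemize}
If you could show tangencies are isolated, countable stability of Hausdorff dimension plus the squeeze above would still yield (i); but nothing in your proposal shows that.

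The missing input is likely Floquet theory for the periodic approximants. The discriminant of a periodic Schr\"odinger operator (continuum included) has no critical points inside its bands, so each $x_k$ is strictly monotone on every component of $\{|x_k|<1\}$. A tangency of $\gamma$ with a center-stable leaf makes $\traceMap^k\circ\gamma$ fold along the unstable direction. One would then show this forces, for suitable $k$, a critical point of $x_k$ at an energy where $|x_k|<1$, which is impossible.
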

\noindent Thus, to study the local fractal dimensions of the spectrum, it suffices to understand the invariant $I$.

Let us return to one of the difficulties mentioned above: in the current setting the invariant can in principle be nonconstant (as a function of the energy).
In fact, not only is it possible that $I$ is nonconstant; it is nonconstant if and only if $f_0 \neq f_1$ (i.e., the potentials $V_\omega$ are aperiodic).

\begin{prop} \label{prop:IEnonconstant}
With setup as above,    $I(E)$ is constant if and only if $f_0=f_1$, in which case it vanishes identically.
\end{prop}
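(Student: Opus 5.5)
The plan is to first identify $I(E)$ with a commutator trace, then handle the two directions separately. The periodic direction is a direct computation; the converse goes through high-energy asymptotics.

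\emph{The Fricke identity.} Put $A=\monodromy_0(E)$ and $B=\monodromy_1(E)$, so that $\monodromy_2(E)=AB$. The Fricke trace identity in $\SL(2,\C)$ reads
\[
\tr(A)^2+\tr(B)^2+\tr(AB)^2-\tr(A)\tr(B)\tr(AB)-2=\tr(ABA^{-1}B^{-1}).
\]
Rewritten with half-traces, this gives
\[
I(E)=\tfrac14\bigl(\tr(ABA^{-1}B^{-1})-2\bigr).
\]
If $f_0=f_1$, then $A=B$, the commutator is $\idty$, and so $I\equiv 0$. One can also check this by hand: with $x_0=x_1=x$ and $x_2=2x^2-1$, the expression for $I$ collapses to $0$. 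This proves the ``if'' direction together with the vanishing claim.

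\emph{The constant must be $0$.} For the converse, suppose $I(E)\equiv c$. Each $x_k(E)$ is entire in $E$, so $I$ is entire. Write $E=k^2$ with $k\to+\infty$. The standard variation-of-parameters estimate gives
\[
\transferMatrix_{[0,1]}(f_j,E)=R_k+O(k^{-1}),
\]
where $R_k$ is the free transfer matrix, conjugate to a rotation by angle $k$. After conjugating by $\operatorname{diag}(k^{-1/2},k^{1/2})$, both matrices are rotations plus $O(k^{-1})$, and rotations commute. Hence $\tr[A,B]\to 2$, so $I(E)\to 0$ and $c=0$. Thus, if $f_0\neq f_1$, it suffices to show that $I$ does not vanish identically.

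\emph{Second-order term.} Next I would push the Born series for each $\transferMatrix_{[0,1]}(f_j,k^2)$ to second order in $k^{-1}$. Write each matrix as $R_k(\idty+k^{-1}P_j(k)+O(k^{-2}))$ in the conjugated frame. Here $P_j(k)$ is built from $\int_0^1 f_j$ and $\int_0^1 f_j(x)e^{\pm 2ikx}\,\rmd x$. Expanding the commutator, the first-order terms cancel in the trace. Up to a trigonometric prefactor in $k$, the $k^{-2}$ term should be a quadratic expression of the form
\[
\Bigl|\int_0^1 (f_0-f_1)(x)e^{2ikx}\,\rmd x\Bigr|^2
\]
(possibly mixed with the zero mode). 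So I expect $I(k^2)=k^{-2}Q(k)+O(k^{-3})$, with $Q$ explicit.

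\emph{Nondegeneracy, the main obstacle.} Along real $k$, Riemann--Lebesgue makes $Q(k)\to0$ as well, so the error term competes with the leading term and a real-axis argument is insufficient. I would instead let $k$ be complex, $k=\sigma+i\tau$ with $\tau$ large. In that regime the Fourier integrals grow like $e^{2\tau\cdot(\text{support})}$, and the Born-series error can be bounded relative to the same exponential factor. The identity $I\equiv0$ then forces the entire function $k\mapsto\int_0^1(f_0-f_1)e^{2ikx}\,\rmd x$ to vanish, first along a ray and then everywhere. By Fourier uniqueness, $f_0=f_1$.

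If this becomes too delicate, a fallback is the algebraic route. Vanishing $I$ is equivalent to $\tr[A,B]=2$, which means $A$ and $B$ share a common eigenvector for every $E$. I would then exploit the asymptotics of the Weyl solutions of $f_0$ and $f_1$ as $E\to-\infty$, together with Borg-type uniqueness, to deduce that the two pieces coincide.
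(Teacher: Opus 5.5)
\textbf{Verdict: there is a genuine gap in the converse direction.} Your ``if'' direction is correct. Your high-energy argument that a constant value of $I$ must be $0$ is essentially what the paper imports from \cite[Section~3]{FillmanMei2018AHP}.

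\textbf{The main route (Born series, complex $k$).} Your leading-order guess is right. Since $\tr(ABA^{-1}B^{-1})-2=-\det(AB-BA)$, one finds that $I(k^2)$ is proportional to $k^{-2}\sin^2 k\,\hat g(2k)\hat g(-2k)$ plus higher order, with $g=f_0-f_1$. But the step ``$I\equiv 0$ forces $\hat g$ to vanish along a ray'' does not follow. From $I\equiv 0$ you only learn that the leading term equals minus the remainder. To conclude that it vanishes, you need the remainder to be $o$ of the leading term along the ray. That requires a \emph{lower} bound on $|\hat g(2k)\hat g(-2k)|$ comparable to the exponential majorant of the remainder, and no such bound is available. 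The remainder terms are multilinear integrals in which $g$ appears alongside $f_0$ or $f_1$ themselves; only their total vanishes when $f_0=f_1$. So their exponential type as $|\operatorname{Im}k|\to\infty$ is set by the supports of $f_0,f_1$, while that of $\hat g$ is set by the support of $g$. If $g$ lives on a proper subinterval, or vanishes to high order at an end of its support, the $k^{-2}$ term need not dominate. Making this rigorous amounts to a support-shrinking argument, i.e.\ reproving a local Borg--Marchenko theorem.

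\textbf{The fallback.} It names the right ingredients but skips the step that matters. Knowing that $\monodromy_0(E)$ and $\monodromy_1(E)$ share an eigenvector at each $E$ does not make them commute: two hyperbolic matrices can share one eigenvector without commuting. So you do not yet have an identity to feed into a uniqueness theorem.

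\textbf{How the paper bridges this.}
\begin{enumerate}
\item For $E$ in the interior of a band of the $1$-periodic extension of $f_0$, $\monodromy_0(E)$ is elliptic with eigenvectors $v,\bar v$.
\item Since $\monodromy_1(E)$ is real and shares one of them, it shares both. Hence the two matrices are simultaneously diagonalizable and commute there.
\item The commutator $\monodromy_0\monodromy_1-\monodromy_1\monodromy_0$ is entire in $E$ and vanishes on an interval, hence everywhere. This says $\transferMatrix_{[0,2]}(f_0\star f_1,E)\equiv\transferMatrix_{[0,2]}(f_1\star f_0,E)$.
\item Borg--Marchenko then gives $f_0\star f_1=f_1\star f_0$, so $f_0=f_1$.
\end{enumerate}

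\textbf{Completing your $E\to-\infty$ idea.} This can also be made to work, but again only through analyticity. Below both periodic spectra, the common-eigenvector condition says that the product of the four real-analytic differences $m^{(0)}_{\sigma}-m^{(1)}_{\sigma'}$ ($\sigma,\sigma'\in\{+,-\}$) vanishes. Here $m^{(j)}_\pm$ are the half-line Weyl functions of the periodic extensions. Hence one factor vanishes identically. The asymptotics $m_\pm\approx\mp\sqrt{-E}$ exclude the mixed factors, and half-line Borg--Marchenko finishes the argument.
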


\begin{proof}
With the help of the Cayley--Hamilton theorem, one can check that
\begin{equation} \label{eq:FVIintermsofCommutator}
    I(\tfrac12 \tr \mathbf{A},
    \tfrac12 \tr \mathbf{B},
    \tfrac12 \tr \mathbf{AB})
    = \tfrac14(\tr(\mathbf{A}^{-1}\mathbf{B}^{-1}\mathbf{AB}) - 2), \quad \mathbf{A}, \mathbf{B} \in \SL(2,\R).
\end{equation}
In particular, for $\mathbf{A}, \mathbf{B} \in \SL(2,\R)$,
\begin{equation} \label{eq:FVIvanishIFFcommute}
        I(\tfrac12 \tr \mathbf{A}, \tfrac12 \tr \mathbf{B}, \tfrac12\tr \mathbf{AB}) = 0
        \text{ whenever }  \mathbf{AB}=\mathbf{BA}.
    \end{equation}
On one hand, if $f_0=f_1$, then  $\monodromy_0(E) = \monodromy_1(E)$, which certainly commute for every $E$, leading to $I \equiv 0$.

On the other hand, if $I$ is constant, then, on account of \cite[Section~3]{FillmanMei2018AHP}, that constant must be zero.
From here, the proof that $f_0=f_1$ is similar to and inspired by the proof of   \cite[Theorem~2.1]{BDFGVWZ2019JFA}.
To keep this paper self-contained, we give the details. Consider $E \in \R$. 
Due to \eqref{eq:FVIintermsofCommutator}, it follows that $\tr \big( \monodromy_0(E)^{-1} \monodromy_1(E)^{-1} \monodromy_0(E) \monodromy_1(E) \big) = 2$, which (since all matrices in question belong to $\SL(2,\R)$) implies that 
\[
[\monodromy_0(E), \monodromy_1(E)]
\eqdef \monodromy_0(E)\monodromy_1(E) - \monodromy_1(E)\monodromy_0(E)
\]
has a nontrivial kernel and therefore $\monodromy_0(E)$ and $\monodromy_1(E)$ have at least one common eigenvector by a standard argument in linear algebra (cf.\ the proof of \cite[Theorem~40.5]{Prasolov1994linear}). Now, for every $E$ for which $\monodromy_0$ is elliptic, its eigenvectors can be chosen to be complex conjugates of one another and hence $\monodromy_0(E)$ and $\monodromy_1(E)$ are simultaneously diagonalizable for all such $E$.\ \ 
Consequently, 
\begin{equation}
    \monodromy_0(E)\monodromy_1(E) - \monodromy_1(E)\monodromy_0(E)
\end{equation}
is an analytic function of $E$ that vanishes on a nondegenerate closed interval, hence vanishes identically.

Equivalently, $\transferMatrix_{[0,2]}(f_0\star f_1,E) \equiv \transferMatrix_{[0,2]}(f_1 \star f_0,E)$, where we write $f_0 \star f_1$ for the \emph{concatenation} of $f_0$ and $f_1$ (compare \eqref{eq:fkDef}).
By the Borg--Marchenko theorem (cf.\ \cite{Bennewitz, Borg, Marchenko}), 
we have $f_0 \star f_1 \equiv f_1 \star f_0$, whence $f_0=f_1$.
\end{proof}

\section{Proofs of Main Results}
\label{sec:proofs}

Let us begin with a preparatory result about nonnegative potentials in the large-coupling regime.
In the lemma below, we emphasize that the conclusion holds as long as the potential $\varphi$  does not vanish on any nontrivial interval (however, it is otherwise permitted to vanish on a set of positive measure).

\begin{lemma} \label{lem:traceAsymptoticsLargelambda}
Suppose $\varphi:[a, b]\to [0,\infty)$ is continuous  and $\{x : \varphi(x)=0\}$ is nowhere dense in $[a,b]$. 
Let $\monodromy_{E, \lambda} = \transferMatrix_{[a,b]}(\lambda \varphi,E)$ be the monodromy matrix related to the equation
\begin{equation}\label{e.initial}
-y''(x)+\lambda\varphi(x)y(x)=Ey(x), \ x\in [a,b], 
\end{equation}
as defined in \eqref{eq:TransMatDef}. Then for any $K\in \R$ we have
$$
\tr \monodromy_{E,\lambda}\to \infty\ \text{as}\ \lambda\to \infty,
$$
uniformly in $E \leq K$.
In particular, $\monodromy_{E,\lambda}$ is hyperbolic for $\lambda$ large enough.
\end{lemma}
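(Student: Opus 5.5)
Because $\varphi \ge 0$ and $E \le K$, the effective potential $q_\lambda := \lambda\varphi - E$ satisfies $q_\lambda \ge \lambda\varphi - K$, so the plan is to use convexity and comparison arguments for the solutions $u = u_{\lambda\varphi,E,a}$ and $v = v_{\lambda\varphi,E,a}$. We have $\tr \monodromy_{E,\lambda} = u(b) + v'(b)$. The goal is to show that both $u(b)$ and $v'(b)$ are positive and that at least one of them tends to $+\infty$ uniformly in $E \le K$. Note that we cannot argue crudely with $q_\lambda \ge 0$, because $\varphi$ may vanish on a set of positive measure and $K$ may be large. We therefore work with a fixed finite collection of subintervals on which $\varphi$ is bounded below. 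Since $\{\varphi = 0\}$ is closed and nowhere dense, any interval $J \subset [a,b]$ contains a closed subinterval $J'$ with $\varphi \ge c_{J'} > 0$ on $J'$.

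\textbf{Step 1 (positivity and monotonicity).} Partition $[a,b]$ into $N$ short intervals of length $\delta$, with $\delta$ chosen so that $K\delta^2$ is small. In each one choose a subinterval $J_i$ of length $\eta_i$ on which $\varphi \ge c_i > 0$. For $\lambda$ large we have $q_\lambda \ge \lambda c_i - K > 0$ on $J_i$. I want an invariant cone: if $y(x_0) > 0$ and $y'(x_0) \ge -\epsilon\, y(x_0)$, then $y$ stays positive and the cone condition persists. On each $J_i$, the equation $y'' = q_\lambda y$ with large $q_\lambda$ drives $y'/y$ up to order $\sqrt{\lambda c_i}$; the Riccati equation for $w = y'/y$ gives $w' = q_\lambda - w^2$. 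On the complementary pieces of length at most $\delta$, $q_\lambda \ge -K$, so $w' \ge -K - w^2$, and $w$ can drop by at most $O(K\delta)$ when it starts nonnegative. It cannot blow up to $-\infty$ across a gap of length $\delta < \pi/(2\sqrt{K})$.

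\textbf{Step 2 (growth).} For $u$, we start with $w(a) = 0$. For $v$, we have $v(a) = 0$ and $v'(a) = 1$, so $w = +\infty$ initially, and $v$ becomes positive immediately. In both cases the Riccati comparison keeps $w \ge -CK\delta$ throughout. Across each $J_i$, the lower bound $w' \ge \lambda c_i - K - w^2$ pushes $w$ to at least about $\sqrt{\lambda c_i}/2$ after a distance $o(1)$. Hence $\log y$ increases by at least about $\eta_i\sqrt{\lambda c_i}/4$, while on the gaps it decreases by at most $O(K\delta^2)$. It follows that $y(b) \ge \exp\bigl(c\sqrt{\lambda}\bigr) \to \infty$, uniformly in $E \le K$, and $y'(b) \ge -CK\delta\, y(b)$. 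Applied to $u$, this gives $u(b) \to \infty$. Applied to $v$, we get $v'(b) = w(b)\, v(b)$. To ensure $w(b) > 0$, I place the last interval $J_N$ near $b$, so that immediately before $b$ we have $w \gtrsim \sqrt{\lambda}$. After a gap of length at most $\delta$, $w$ remains positive. Therefore $v'(b) > 0$, and in fact $v'(b) \to \infty$. Thus $\tr \monodromy_{E,\lambda} \ge u(b) \to \infty$ uniformly, and in particular $\tr \monodromy_{E,\lambda} > 2$, so the matrix is hyperbolic.

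\textbf{Main obstacle.} The delicate part is the uniform Riccati bookkeeping on the gaps, where $\lambda\varphi$ may be tiny but $E$ may be as large as $K$. This means choosing $\delta$ depending only on $K$, and then choosing $J_i$, $c_i$ and the threshold on $\lambda$, all independent of $E$. For $E \to -\infty$ there is no difficulty, since that only increases $q_\lambda$ and helps every estimate. I would handle this first by a Sturm comparison with the constant potential $-K$ on each gap, which gives explicit cosine and sine bounds.
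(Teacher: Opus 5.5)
Your proposal is correct and is essentially the paper's argument in different coordinates. Your Riccati variable $w=y'/y$ is exactly $\cot\theta$ for the paper's angle $\theta$ of $(y',y)^\top$, and your cone $\{y>0,\ y'\ge-\epsilon y\}$ is the paper's $V_s$. Your alternating partition into short gaps (controlled only by $E\le K$) and subintervals where $\varphi$ is bounded below (where the cone is restored and $e^{c\sqrt{\lambda}}$ growth occurs) is precisely the content of the paper's four claims. Tracking $\log u$ directly and separately securing $v'(b)>0$ via a good interval near $b$ makes the passage from cone control to $\tr\monodromy_{E,\lambda}=u(b)+v'(b)\to\infty$ more explicit than the paper's closing sentence. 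One small correction: gaps between consecutive $J_i$ can have length up to $2\delta$, not $\delta$, but this only changes constants.
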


To prove Lemma~\ref{lem:traceAsymptoticsLargelambda}, we break some of the main comparison estimates into three further lemmata.

\begin{lemma} \label{lem:autCompare}
Suppose 
\[\mathbf{A}(x)=\begin{bmatrix}
  a_{11}(x) & a_{12}(x) \\
  a_{21}(x) & a_{22}(x) \\
\end{bmatrix}, \quad x \in [a,b]
\]
is a continuous matrix-valued function with nonnegative entries and that
\[\mathbf{B}=\begin{bmatrix}
  b_{11} & b_{12} \\
  b_{21} & b_{22} \\
\end{bmatrix}\]
is a constant matrix with nonnegative entries satisfying $b_{ij}\le a_{ij}(x)$, $i,j=1,2$ for all $x\in [a,b]$. Let $w\in \R^2$ be a nonzero vector with $w_1\ge 0, w_2\ge 0$, and consider solutions $y$ and $z$ of the Cauchy problems
\[  \frac{\rmd y}{\rmd x}
=\mathbf{A}(x)y, \qquad \frac{\rmd z}{\rmd x} = \mathbf{B}z, \qquad y(a)=z(a)=w. \] 
Then $y_i(x)\ge z_i(x)$, $i=1,2$, for all $x\in [a,b]$.     
\end{lemma}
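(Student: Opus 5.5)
The plan is to compare both solutions with the solution of an auxiliary linear system that differs from both. Comparing $y$ with $z$ directly is awkward, because off-diagonal nonnegativity alone gives a comparison principle for the difference $y-z$ only once we know the driving term has the right sign.

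\emph{Step 1: nonnegativity of $z$ and $y$.} First we check that $z_i(x)\ge 0$. Write $\mathbf{B}=\mathbf{D}+\mathbf{N}$, with $\mathbf{D}$ diagonal and $\mathbf{N}$ having nonnegative entries. Then $e^{\mathbf{B}(x-a)}$ has nonnegative entries: for instance, $e^{\mathbf{B}t}=e^{-ct}e^{(\mathbf{B}+c\idty)t}$, where $\mathbf{B}+c\idty$ is entrywise nonnegative for $c\ge 0$, and the power series of its exponential has nonnegative terms. (Since all entries of $\mathbf{B}$ are already nonnegative, even $c=0$ works.) Hence $z=e^{\mathbf{B}(x-a)}w$ is componentwise nonnegative. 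The same holds for $y$ via the Picard iteration $y^{(n+1)}(x)=w+\int_a^x \mathbf{A}(s)y^{(n)}(s)\,\rmd s$. Starting from $y^{(0)}=w\ge 0$, every iterate is nonnegative, because $\mathbf{A}$ has nonnegative entries. The iterates converge uniformly to $y$ on $[a,b]$ by continuity of $\mathbf{A}$.

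\emph{Step 2: monotone comparison via Picard iteration.} This is the main step. We run Picard iterations for both systems in parallel:
\[
y^{(n+1)}(x)=w+\int_a^x \mathbf{A}(s)y^{(n)}(s)\,\rmd s, \qquad z^{(n+1)}(x)=w+\int_a^x \mathbf{B}z^{(n)}(s)\,\rmd s,
\]
with $y^{(0)}=z^{(0)}=w$. We show by induction that $y^{(n)}\ge z^{(n)}\ge 0$ componentwise. Suppose this holds for $n$. Then
\[
\mathbf{A}(s)y^{(n)}(s)-\mathbf{B}z^{(n)}(s)=(\mathbf{A}(s)-\mathbf{B})y^{(n)}(s)+\mathbf{B}\,(y^{(n)}(s)-z^{(n)}(s)).
\]
Both terms are componentwise nonnegative: $\mathbf{A}-\mathbf{B}\ge 0$ entrywise and $y^{(n)}\ge 0$ in the first, and $\mathbf{B}\ge 0$ and $y^{(n)}-z^{(n)}\ge 0$ in the second. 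Integrating from $a$ to $x$ gives $y^{(n+1)}\ge z^{(n+1)}$, and nonnegativity of $z^{(n+1)}$ follows in the same way.

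\emph{Step 3: passage to the limit.} Since Picard iterates of linear systems with continuous coefficients converge uniformly on the compact interval $[a,b]$, passing to the limit preserves the inequality, so $y_i(x)\ge z_i(x)$ for all $x\in[a,b]$.

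The only real care needed is in Step 2, namely using the nonnegativity of the iterates to handle the term $(\mathbf{A}-\mathbf{B})y^{(n)}$. This is why all entries, including the diagonal ones, are assumed nonnegative. The iteration scheme avoids any differential-inequality argument at the boundary of the positive cone.
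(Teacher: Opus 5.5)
Your proof is correct, but it is organized differently from the paper's. After normalizing $a=0$, the paper compares $y$ and $z$ directly. It uses $z(x)=e^{x\mathbf{B}}w\ge 0$ and writes the equation for the difference as $(y-z)'=\mathbf{A}(x)(y-z)+(\mathbf{A}(x)-\mathbf{B})z$ with $(y-z)(0)=0$. The forcing term is then nonnegative, and nonnegativity of $y-z$follows from positivity of the flow of $\mathbf{A}(\cdot)$.

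You instead split $\mathbf{A}y^{(n)}-\mathbf{B}z^{(n)}=(\mathbf{A}-\mathbf{B})y^{(n)}+\mathbf{B}(y^{(n)}-z^{(n)})$ and run the two Picard iterations in parallel. This is the mirror image of the paper's decomposition: the forcing is placed on $y$ and the propagation on $\mathbf{B}$.

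The paper's version is two lines, but it leaves implicit the fact that a linear system with nonnegative coefficients, nonnegative forcing and zero initial data has a nonnegative solution. One would justify that by Duhamel's formula with a nonnegative propagator, i.e., essentially by a Picard iteration like yours. Your version proves everything from scratch with a single induction and no differential inequality.

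Two small remarks. First, Step 1 is redundant, since the induction in Step 2 already gives $y^{(n)}\ge z^{(n)}\ge 0$. Second, your opening paragraph says comparing $y$ with $z$ directly is awkward and that you will compare both with an auxiliary system. That does not describe what you actually do. The paper's proof also shows the direct comparison is harmless: once the forcing is written as $(\mathbf{A}-\mathbf{B})z$, its sign is known because $z=e^{x\mathbf{B}}w\ge 0$ explicitly.
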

 \begin{proof}
Consider without loss the case $a=0$. Notice that the Cauchy problem for $z$ is solved by $z(x) = e^{xB}w$ and that the Cauchy problem for $y-z$ can be written as
\[ 
\frac{\rmd}{\rmd  x}(y-z) = \mathbf{A}(x)(y-z)+ (\mathbf{A}(x)-\mathbf{B})z,\quad (y-z)(0)=0.
\]
By the assumptions on $\mathbf{A}(\cdot)$, $\mathbf{B}$, and $w$, it follows that $y(x) - z(x)$ has nonnegative entries for all $x$.
 \end{proof}

\begin{lemma} \label{lem:normgrowth}
Suppose $m \geq 1$ and let $y(x)= [y_1\ y_2]^\top$ be a solution of
\begin{equation} \label{eq:schrodingerFlowConstm}
\begin{bmatrix}
  y_1' \\
  y_2' \\
\end{bmatrix}=\begin{bmatrix}
 0 & m \\
  1 & 0 \\
\end{bmatrix}\begin{bmatrix}
  y_1 \\
  y_2 \\
\end{bmatrix}
\end{equation}
such that $y_1(a)\ge 0$ and $y_2(a)\ge 0$.
Then, 
\begin{equation}
    \frac{\rmd^2}{\rmd x^2}(y_1^2+y_2^2)\ge 2(1+m)(y_1^2+y_2^2).
\end{equation}
\end{lemma}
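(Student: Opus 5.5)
Write $N(x) = y_1^2 + y_2^2$ and differentiate twice using the system \eqref{eq:schrodingerFlowConstm}, namely $y_1' = m y_2$ and $y_2' = y_1$. The first derivative is
\[
N' = 2y_1y_1' + 2y_2y_2' = 2(m+1)y_1y_2 .
\]
Differentiating once more gives
\[
N'' = 2(m+1)\bigl(y_1'y_2 + y_1y_2'\bigr) = 2(m+1)\bigl(my_2^2 + y_1^2\bigr).
\]
Since $m \geq 1$, we have $my_2^2 + y_1^2 \geq y_1^2 + y_2^2$. Therefore $N'' \geq 2(1+m)N$.

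This pointwise inequality holds for every solution and for every $x$. In particular, the sign hypothesis $y_1(a), y_2(a) \geq 0$ is not needed for the bound as stated.

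That hypothesis matters elsewhere. By Lemma~\ref{lem:autCompare} with $\mathbf{A} = \mathbf{B}$, it ensures that the components stay nonnegative. Then $N' \geq 0$, so the norm is nondecreasing, and the differential inequality yields exponential growth of the norm in later use.

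The computation is routine, and there is no real obstacle. The only point needing care is the use of $m \ge 1$ to compare $my_2^2$ with $y_2^2$.
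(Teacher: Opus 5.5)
Your computation is correct and is essentially the paper's: the paper expands $R''$ term by term as $2\bigl(m^2y_2^2+my_1^2+y_1^2+my_2^2\bigr)$, which equals your $2(m+1)(y_1^2+my_2^2)$, and then uses $m\ge 1$ in the same way; you are also right that the sign hypothesis plays no role in this inequality. One small correction to your side remark: Lemma~\ref{lem:autCompare} with $\mathbf{A}=\mathbf{B}$ is vacuous, since it only gives $y_i\ge y_i$, so to get nonnegativity of the components you should instead compare with $\mathbf{B}=0$, or simply note that $e^{(x-a)\mathbf{B}}$ has nonnegative entries for $x\ge a$.
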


\begin{proof}
Denote $R = y_1^2+y_2^2$ and note from direct computations using \eqref{eq:schrodingerFlowConstm} that  
\begin{align}
\nonumber
R'' = 2\big[(y_1')^2+y_1^{}y_1'' + (y_2')^2 + y_2^{}y_2'')\big]
& = 2\big(m^2y_2^2 + my_1^2 + y_1^2+ m y_2^2 \big) \\
&   
    \geq 2(1+m)(y_1^2+y_2^2),
\end{align}
as promised.
\end{proof}

\begin{lemma} \label{lem:C2growth}
Suppose $k>0$ and $R(x)$, $x\in [a,b]$, is a $C^2$-function such that 
\begin{equation}
\begin{cases} 
    R'' \ge kR,  \\[1mm]
    R' \ge 0, \\[1mm]
    R(a)=R_0>0. 
\end{cases} 
\end{equation}
Then $R(x)\ge \frac{R_0}{2}\left(e^{\sqrt{k}(x-a)}+e^{-\sqrt{k}(x-a)}\right)$ for all $x\in [a,b]$.  
\end{lemma}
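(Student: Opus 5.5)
The plan is a comparison argument: compare $R$ with the solution of the extremal ODE $S''=kS$, $S(a)=R_0$, $S'(a)=0$, namely $S(x)=\frac{R_0}{2}\bigl(e^{\sqrt{k}(x-a)}+e^{-\sqrt{k}(x-a)}\bigr)=R_0\cosh(\sqrt{k}(x-a))$. The hypothesis $R'\ge 0$ will be used in two ways. It gives $R'(a)\ge 0 = S'(a)$. It also gives $R(x)\ge R_0>0$ throughout, so the inequality $R''\ge kR$ forces $R''>0$ and positivity is never lost.

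Set $w=R-S$. Then $w(a)=0$, $w'(a)=R'(a)\ge 0$, and
\[
w''=R''-S''\ge kR-kS=kw .
\]
The task reduces to showing that $w\ge 0$ on $[a,b]$. To see this, introduce the auxiliary quantity $g=w'-\sqrt{k}\,w$. A short computation gives
\[
g'+\sqrt{k}\,g=w''-kw\ge 0 ,
\]
so $(e^{\sqrt{k}(x-a)}g)'\ge 0$. Since $g(a)=w'(a)\ge 0$, this yields $g\ge 0$ on $[a,b]$, that is, $w'\ge \sqrt{k}\,w$. Then $(e^{-\sqrt{k}(x-a)}w)'\ge 0$, and with $w(a)=0$ this gives $w\ge 0$. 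This is the desired bound $R\ge R_0\cosh(\sqrt{k}(x-a))$.

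This first-order factorization $\bigl(\tfrac{\rmd}{\rmd x}+\sqrt{k}\bigr)\bigl(\tfrac{\rmd}{\rmd x}-\sqrt{k}\bigr)$ is the step I expect to require the most care. Its purpose is to avoid any maximum-principle or sign issues with second-order differential inequalities. Each factor only involves integrating factors with positive exponentials, so the signs propagate cleanly from the initial data at $a$. I will also check that the argument only uses $R\in C^2$, $R'(a)\ge 0$, and $R''\ge kR$. The monotonicity $R'\ge 0$ on the whole interval is then not strictly needed beyond the value at $a$; it is harmless, and it reflects how the lemma will be applied with Lemma~\ref{lem:normgrowth}, where the nonnegative initial vector keeps the solution in the positive quadrant and so makes $R=y_1^2+y_2^2$ nondecreasing.
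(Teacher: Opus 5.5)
Your proof is correct, but it handles the degenerate initial data differently from the paper. The paper compares $R$ with the solution $S_\varepsilon$ of $S''=kS$, $S(a)=R_0-\varepsilon$, $S'(a)=0$. Then $T=R-S_\varepsilon$ starts strictly positive: $T(a)=\varepsilon$ and $T'(a)\ge 0$. While $T>0$ one has $T''\ge kT>0$, so $T'\ge T'(a)\ge 0$ and $T\ge\varepsilon$, and a continuity argument makes this hold on all of $[a,b]$. Finally the paper lets $\varepsilon\to 0$. The shift by $\varepsilon$ is there to avoid the data $T(a)=T'(a)=0$, where this sign bootstrap does not close: it does not stop $T$ from dipping below zero after a point where it vanishes to first order.

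Your factorization $w''-kw=\bigl(\tfrac{\rmd}{\rmd x}+\sqrt{k}\bigr)\bigl(\tfrac{\rmd}{\rmd x}-\sqrt{k}\bigr)w$ avoids this issue. Two integrating-factor steps carry the signs of $g(a)=w'(a)\ge 0$ and $w(a)=0$ across the interval, with no perturbation, continuity argument, or limit. It also yields a slightly stronger conclusion: $(R-S)'\ge\sqrt{k}\,(R-S)$, so $e^{-\sqrt{k}(x-a)}(R-S)$ is nondecreasing.

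As for trade-offs, the paper's barrier-type argument is the more generic tool, since it does not depend on a constant-coefficient factorization. Yours is shorter and fully explicit for constant $k$. You are right that the hypothesis $R'\ge 0$ is only used through $R'(a)\ge 0$, and the same is true of the paper's proof. Your remark that $R\ge R_0$ forces $R''>0$ is true but plays no role in your argument.
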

\begin{proof}
For a small $\varepsilon>0$, consider a related system $S'' = kS$ with initial conditions $S(a)=R_0-\varepsilon>0$ and $S'(a) =0$. Then we have
$$
S(x) = \frac{R_0-\varepsilon}{2}\left(e^{\sqrt{k}(x-a)} +
        e^{-\sqrt{k}(x-a)}\right).
$$
If $T(x)=R(x)-S(x)$, then we have $T''\ge kT$, $T'(a)\ge 0$, $T(a)=\varepsilon>0$, from which we see that $T(x)\ge \varepsilon>0$ for all $x\in [a,b]$. Therefore, 
$$
R(x)\ge \frac{R_0-\varepsilon}{2}\left(e^{\sqrt{k}(x-a)} +
        e^{-\sqrt{k}(x-a)}\right).
$$
Since $\varepsilon>0$ can be taken arbitrarily small, this implies the desired estimate.
\end{proof}

We now use these results to prove the first main technical lemma.

\begin{proof}[Proof of Lemma~\ref{lem:traceAsymptoticsLargelambda}]
Consider the nonautonomous flow on $\R^2$ given by
\begin{equation} \label{eq:nonautFlow}
    \frac{\rmd}{\rmd x}\begin{bmatrix}  y_1 \\ y_2 \end{bmatrix}=\begin{bmatrix}  0 & \lambda \varphi(x)-E\\  1 & 0\end{bmatrix} \begin{bmatrix}  y_1 \\ y_2 \end{bmatrix},
\end{equation}
and notice that $y$ solves \eqref{e.initial} if and only if $(y',y)^\top$ solves \eqref{eq:nonautFlow}.
Then the Dirichlet solution (that is, the solution that satisfies $y(a) = 0$, $y'(a) = 1$) corresponds to the dynamics of the vector $(1,0)^\top$, and the Neumann  solution ($y(a)=1$, $y'(a)=0$) corresponds to the dynamics of the vector $(0,1)^\top$. 

For any vector $u=(u_1, u_2)^\top \in \R^2$, and any $x_1<x_2$ from $[a,b]$, denote by 
\begin{equation} 
F_{[x_1, x_2]}(u) \eqdef  \transferMatrix_{[x_1,x_2]}(\lambda\varphi,E)u \in \R^2
\end{equation} 
the solution of  \eqref{eq:nonautFlow} at the moment~$x_2$, for initial conditions $y_1(x_1)=u_1, y_2(x_1)=u_2$.

For $t \in (-\pi/2,\pi/2)$, define the cone
$$
V_t \eqdef \left\{v\in \R^2\setminus\{0\} :  \arg v\in \left(0, \frac{\pi}{2} + t\right)\right\}.
$$
Fix $s>0$ small.
We will split the proof into several claims. 

\vspace{3pt}

\begin{claim} There exists $\varepsilon = \varepsilon(\varphi, K, s)>0$ such that for any {$E\leq K$} and any $\lambda\ge 0$, the following hold:
\begin{enumerate}[label={\rm(\alph*)}]
\item For any vector $w \in V_0$ and any $[x_1, x_2]\subseteq [a, b]$ with $|x_2-x_1|\le \varepsilon$, one has $F_{[x_1, x_2]}(w)\in V_s$.

\item For any vector $w\in V_{-s}$ and any $[x_1, x_2]\subseteq [a, b]$ with $|x_2-x_1|\le \varepsilon$, one has $F_{[x_1, x_2]}(w) \in V_0$.

\item In both settings, 
$|F_{[x_1, x_2]}(w)|>\frac{1}{2}|w|$.
\end{enumerate}
\end{claim}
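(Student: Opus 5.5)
The plan is to work in polar coordinates for the flow \eqref{eq:nonautFlow}, writing $y=(y_1,y_2)^\top=|y|(\cos\theta,\sin\theta)^\top$ and $m(x)=\lambda\varphi(x)-E$. The one piece of information used about $m$ is a one-sided bound. Since $\varphi\ge 0$, $\lambda\ge0$ and $E\le K$, we have $m(x)\ge -K_+$ with $K_+=\max(K,0)$. There is no upper bound on $m$, so the argument must only ever use inequalities in the direction where large positive $m$ helps. This is what makes $\varepsilon$ independent of $\lambda$.

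\emph{Step 1 (angles, parts (a) and (b)).} A direct computation gives
\[
\theta'=\frac{y_1y_2'-y_2y_1'}{|y|^2}=\cos^2\theta-m\sin^2\theta\le \cos^2\theta+K_+\sin^2\theta\le 1+K_+=:C .
\]
On the lower boundary ray $\theta=0$ we have $\theta'=1>0$, so trajectories cannot leave the cone through the positive $y_1$-axis. The only way out is through the upper ray $\theta=\pi/2+t$, and $\theta$ increases at rate at most $C$. Choosing $\varepsilon\le s/(2C)$ has two consequences. Starting in $V_0$, the angle stays in $(0,\pi/2+s)$ for time $\varepsilon$, which gives (a). Starting in $V_{-s}$, the angle stays in $(0,\pi/2)$, which gives (b). Large positive $m$ only pushes $\theta$ down, and the barrier at $\theta=0$ keeps the trajectory inside the cone, so no upper bound on $m$ is needed.

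\emph{Step 2 (norm, part (c)).} I will not estimate $\frac{\rmd}{\rmd x}\log|y|=(1+m)\sin\theta\cos\theta$ directly. When $\theta>\pi/2$ this derivative is negative and proportional to the unbounded $m$, which would force $\varepsilon$ to depend on $\lambda$. Instead I control the coordinates separately, using only the bounds available inside $V_s$ from Step~1.
\begin{itemize}
\item First, $y_2'=y_1\ge -\tan(s)\,y_2$ holds along the whole trajectory, because $\theta\le\pi/2+s$ gives $y_1\ge-\tan(s)\,y_2$. Hence $y_2(t)\ge e^{-\varepsilon\tan s}\,y_2(t_0)$ for any $t_0\le t$ in $[x_1,x_2]$.
\item Second, while $y_1\ge0$, the coordinate $y_2$ is nondecreasing, and $y_1'=my_2\ge -K_+y_2$.
\end{itemize}
In case (b), Step~1 shows $y_1\ge0$ throughout. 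Then either $y_2(x_2)\ge |w|/2$, and we are done, or $y_2\le|w|/2$ on the whole interval. In the second situation $y_1(x_2)\ge y_1(x_1)-K_+\varepsilon|w|/2$ and $y_2(x_2)\ge y_2(x_1)$. This yields $|y(x_2)|>|w|/2$ once $K_+\varepsilon$ is small.

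In case (a), if $y_1$ never becomes negative the same argument applies. Otherwise, let $t_0$ be the first time with $y_1(t_0)=0$. The previous argument on $[x_1,t_0]$ gives $y_2(t_0)=|y(t_0)|\ge(1-\delta)|w|$, with $\delta\to0$ as $K_+\varepsilon\to0$. The exponential lower bound on $y_2$ then gives $|y(x_2)|\ge y_2(x_2)\ge e^{-\varepsilon\tan s}(1-\delta)|w|>|w|/2$. Note that for $t\ge t_0$ we always have $y_2>0$ inside $V_s$, so this bound is valid even if the trajectory re-enters $\{y_1\ge0\}$.

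Finally, I will fix $\varepsilon=\varepsilon(K,s)$ small enough for both steps; in fact it does not depend on $\varphi$ at all. I expect the main obstacle to be exactly part (c) in setting (a). There the sign of $\log|y|'$ is governed by the unbounded $m$, and the way around it is to bound the norm below by $y_2$ rather than follow $|y|$ itself.
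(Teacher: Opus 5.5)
Your proof is correct. For (a) and (b) it is the paper's argument: the one-sided bound $\theta'\le\max(1,K)$, the barrier $\theta'=1$ at $\theta=0$, and $\varepsilon\lesssim s/K$. Your observation that $\varepsilon$ does not actually depend on $\varphi$ is also true of the paper's proof.

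For (c) you work in Cartesian coordinates, whereas the paper stays in polar coordinates throughout.

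On the region $\{\theta\le\pi/2\}$, the paper simply uses $(\log r)'=(1+m)\sin\theta\cos\theta\ge -K$. This needs only the lower bound on $m$, so your monotonicity/dichotomy argument there is more roundabout than necessary.

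On excursions into $(\pi/2,\pi/2+s)$, the paper rewrites $(\log r)'=\cot\theta-\cot\theta\,\theta'$ and integrates over each excursion interval. The $\rmd\theta$-part vanishes on complete excursions and is nonnegative on the final one, which leaves a lower bound of $-|\cot(\pi/2+s)|\,|I_i|$. Your bound $y_2'=y_1\ge-\tan(s)\,y_2$ is the same identity in disguise, since $r=y_2/\sin\theta$ and $(\log y_2)'=\cot\theta$. Passing to $|y|\ge y_2$ after the first zero of $y_1$ spares you the excursion decomposition and its boundary terms. The paper's version, in exchange, gives a single multiplicative bound of the form $r(x_2)\ge e^{-C\varepsilon}r(x_1)$ with no case split.

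There is one small slip to fix. In case (a), your dichotomy at threshold $|w|/2$ only gives $y_2(t_0)\ge|w|/2$ in its first branch. After the factor $e^{-\varepsilon\tan s}$ this falls below $|w|/2$, so it does not yield $y_2(t_0)\ge(1-\delta)|w|$ as you claim. Either of two fixes works:
\begin{itemize}
\item Run the dichotomy with threshold $|w|$. Then either $y_2(t_0)>|w|$, or $y_2\le|w|$ on $[x_1,t_0]$, which forces $y_1(x_1)\le K_+\varepsilon|w|$ and hence $|y(t_0)|=y_2(t_0)\ge y_2(x_1)\ge(1-K_+\varepsilon)|w|$.
\item Use $(\log|y|)'\ge-K_+/2$ on $\{y_1\ge0\}$ directly.
\end{itemize}
Either way $|y(t_0)|\ge(1-K_+\varepsilon)|w|$, and the rest of your argument goes through.
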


\begin{claimproof} 
Assume without loss that $K \geq 1$.
Consider $w \in V_0$ and the solution $y = (y_1,y_2)^\top$ of \eqref{eq:nonautFlow} with $y(x_1)=w$.
Write $ y(x)= (y_1(x), y_2(x))^\top$ in polar coordinates as
\begin{equation}
    y_1(x) = r(x)\cos\theta(x), \quad y_2(x) = r(x) \sin\theta(x),
\end{equation} 
with $r \geq 0$ and $\theta$ chosen continuously in $x$ with $\theta(x_1) \in (0,\pi/2)$.
Note that
\begin{align} \label{eq:argderivative}
   \frac{\rmd \theta}{\rmd x}
    = \frac{y_1^{}y_2'- y_1'y_2^{}}{y_1^2 + y_2^2}
    = \frac{y_1^2- (\lambda \varphi -E) y^2_2}{y_1^2 + y_2^2}
    =\cos^2\theta-(\lambda\varphi-E)\sin^2\theta.
\end{align}
By the choice of $K$, $\theta'(x)
        \leq K$
uniformly in $x$.
Consequently, if $|x_2-x_1|\leq  \varepsilon \leq s/(2K)$,
\[
\arg F_{[x_1,x_2]}(w)
=\theta(x_2)
\leq \arg w + K\varepsilon 
< \arg w + s.\]
Moreover, \eqref{eq:argderivative} also implies that $
\theta'(x) > 0$ if $\theta(x)>0$ is small enough.
Putting these two observations together shows that any $\varepsilon > 0$ with $\varepsilon \leq s/(2K)$ satisfies~(a) and~(b).
\medskip

To address part~(c), first observe that
\begin{align} \label{eq:normderiv}
      \frac{\rmd r}{\rmd x}
      = \frac{1}{r}(y_1^{}y_1' + y_2^{}y_2')
      = r\cos\theta\sin\theta (\lambda \varphi - E + 1).
\end{align}
Thus, for any $x$ for which $\theta(x) \in [0,\pi/2]$, one has
\begin{equation} \label{eq:dLogRbound}
    \frac{\rmd}{\rmd x}\log r = \frac{1}{r} \frac{\rmd r}{ \rmd x} \geq -K.
\end{equation}

To deal with the case $\theta(x) \in (\pi/2,\pi/2+s)$, notice first that combining \eqref{eq:argderivative} and  \eqref{eq:normderiv} gives
$$
\frac{1}{\cos \theta\sin\theta}\frac{1}{r}\frac{\rmd r}{\rmd x}
=\lambda\varphi-E+1=\frac{1}{\sin^2\theta}-\frac{1}{\sin^2\theta}\frac{\rmd\theta}{\rmd x},
$$
and thus
$$
\frac{1}{r}\frac{\rmd r}{\rmd x}
= -\frac{\cos\theta}{\sin \theta} \frac{\rmd\theta}{\rmd x} + \frac{\cos\theta}{\sin\theta}.
$$
Denote the set of $x \in (x_1,x_2)$ with $\theta(x)>\pi/2$ by $\bigcup (a_i,b_i) \equiv \bigcup I_i$.
We have
\begin{align*} 
\int_{a_i}^{b_i}\frac{1}{r}\frac{\rmd r}{\rmd x} \rmd x
& = -\int_{a_i}^{b_i}\frac{\cos\theta}{\sin \theta}\frac{\rmd\theta}{\rmd x} \, \rmd x  
+\int_{a_i}^{b_i} \frac{\cos\theta}{\sin\theta} \, \rmd x
 \\
& \geq -\int_{\theta(a_i)}^{\theta({b_i})}\frac{\cos\theta}{\sin\theta} \, \rmd\theta- |I_i|\left|\cot\left(\frac{\pi}{2}+s\right)\right| .
\end{align*} 
For any $I_i$ with $\theta(a_i) =\theta(b_i)$, the first term drops out.
Otherwise, that first term is nonnegative, so one arrives at
\begin{align}  \label{eq:dLogRboundBad}
\int_{a_i}^{b_i}\frac{1}{r}\frac{\rmd r}{\rmd x} \rmd x
  \geq  - |I_i|\left|\cot\left(\frac{\pi}{2}+s\right)\right| 
  \geq -3s|I_i|
\end{align} 
for small enough $s>0$.

Combining \eqref{eq:dLogRbound} with \eqref{eq:dLogRboundBad} produces the desired result.
\end{claimproof}

\begin{claim}
 For any $\varepsilon>0$, there exists a partition of the interval $[a,b]$,
$$
t_0=a<t_1<t_2<\cdots <t_{N-1}<t_N=b,
$$
such that 

\begin{enumerate}[label={\rm(\alph*)}]
\item For every $i$, one has $|t_{i+1}-t_i|<\varepsilon$;

\item If $\varphi(t) = 0$ for some $t \in (t_{i}, t_{i+1})$, then $\varphi > 0$ throughout any interval of the partition that is adjacent to $[t_i, t_{i+1}]$. 
\end{enumerate}
\end{claim}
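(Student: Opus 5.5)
The plan is a greedy construction that starts from a fine uniform partition and then places \emph{buffer} intervals, on which $\varphi>0$, around the zeros. Let $Z=\{x\in[a,b]:\varphi(x)=0\}$. It is closed because $\varphi$ is continuous, and nowhere dense by hypothesis. Its complement $U=[a,b]\setminus Z$ is therefore relatively open and dense, and $\varphi>0$ on $U$. The goal is a partition in which intervals meeting $Z$ in their interior (call them \emph{bad}) alternate with intervals that lie entirely in $U$ (call them \emph{good}). Condition (b) then holds because every neighbor of a bad interval is good. Condition (b) says nothing about good intervals, so good intervals may be adjacent to one another.

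The steps are as follows.
\begin{enumerate}
\item Fix $\varepsilon>0$ and choose a uniform partition $a=s_0<s_1<\cdots<s_M=b$ with mesh less than $\varepsilon/2$.
\item Near each interior node $s_j$, use density of $U$ to pick a point $p_j\in U$ with $|p_j-s_j|$ small. Openness of $U$ then gives a small closed interval $J_j=[p_j-\delta_j,p_j+\delta_j]\subset U\cap(a,b)$. Choose the $\delta_j$ so small that the $J_j$ are pairwise disjoint and far from one another, say separated by at least $\varepsilon/4$.
\item Define the new partition as the endpoints $a$, $b$ together with all the endpoints $p_j\pm\delta_j$. It consists of the short intervals $J_j$ and the complementary intervals between consecutive $J$'s, or between an endpoint $a$ or $b$ and the nearest $J$. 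Each complementary interval has length at most roughly $\varepsilon/2+2\max|p_j-s_j|<\varepsilon$, so condition (a) holds.
\item Check condition (b). Each $J_j$ is good, since $\varphi>0$ on it, so we only need to examine the complementary intervals, which may contain zeros. Every interior complementary interval is adjacent only to $J$'s, which are good. Now consider the end intervals $[a,p_1-\delta_1]$ and $[p_{M-1}+\delta_{M-1},b]$. They have only one neighbor each, namely a $J$, so they are also fine.
\end{enumerate}

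The one remaining subtlety is whether "adjacent" should also be read as including intervals adjacent in the wrap-around sense, or whether zeros at the partition points themselves matter. Since the statement only refers to $t\in(t_i,t_{i+1})$, the nodes are irrelevant. Even so, the construction puts every node in $U$ except possibly $a$ and $b$, which is harmless.

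I expect no real obstacle. The only care needed is the bookkeeping in steps 2 and 3: ensuring the $J_j$ are disjoint, lie in $(a,b)$, and still leave a mesh below $\varepsilon$. Both follow by taking $|p_j-s_j|$ and $\delta_j$ less than, say, $\varepsilon/8$.
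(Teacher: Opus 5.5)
Your proposal is correct and is essentially the paper's argument. The paper likewise refines a uniform partition of mesh at most $\varepsilon/2$ by inserting a closed interval on which $\varphi>0$, placing it inside each cell $(s_{j-1},s_j)$ where you place it near each interior node, so that every interval that might contain a zero is flanked only by positivity intervals. One bookkeeping fix: take the tolerances $|p_j-s_j|$ and $\delta_j$ relative to the grid spacing $(b-a)/M$ (e.g.\ less than $(b-a)/(8M)$) rather than $\varepsilon/8$, since the spacing may be much smaller than $\varepsilon$; the requested $\varepsilon/4$ separation of the $J_j$ is neither achievable in general nor needed.
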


\begin{claimproof}
    Choose a partition $ a=s_0 < \cdots < s_M = b$ of $[a,b]$ into intervals of length at most $\varepsilon/2$. 
    Then, for each $j=1,2,\ldots,M$, the assumption that $\varphi^{-1}(\{0\})$ is nowhere dense permits us to choose a nondegenerate interval $[t_{2j-1}, t_{2j}] \subseteq (s_{j-1}, s_{j})$ on which $\varphi$ is positive, yielding the desired $\{t_i\}$ after putting $t_0=a$, $N = 2M+1$, and $t_N = b$.
\end{claimproof}

\vspace{3pt}

\begin{claim}
If an interval $[x_1, x_2]\subseteq [a,b]$ is such that  $\varphi > 0$ throughout $[x_1,x_2]$, then there exists $\lambda_0=\lambda_0(x_1, x_2, \varphi, K, s)$ such that for any $\lambda\ge \lambda_0$ and any $w\in V_s$, we have $F_{[x_1, x_2]}(w)\in V_{-s}$. 
Moreover, $|F_{[x_1, x_2]}(w)|\ge \frac{1}{2}|w|$ as $\lambda\to +\infty$.
\end{claim}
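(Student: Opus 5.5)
The plan is to reuse the polar-coordinate description from the proof of Claim~1. Write $F_{[x_1,x_2]}(w)$ in polar form $(r(x)\cos\theta(x), r(x)\sin\theta(x))$ with $\theta(x_1)=\arg w\in(0,\pi/2+s)$. The two equations \eqref{eq:argderivative} and \eqref{eq:normderiv} then become
\[
\theta' = \cos^2\theta - m(x)\sin^2\theta, \qquad (\log r)' = (m(x)+1)\cos\theta\sin\theta, \qquad m(x) := \lambda\varphi(x)-E .
\]
Since $\varphi$ is continuous and positive on the compact interval $[x_1,x_2]$, we have $c:=\min_{[x_1,x_2]}\varphi>0$. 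Hence $m(x)\ge \lambda c - K =: M$ uniformly in $E\le K$, and $M\to\infty$ as $\lambda\to\infty$. Everything then reduces to a one-dimensional comparison for $\theta$, made with $M$ large.

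\emph{Step 1 (invariance).} The sector $0<\theta<\pi/2+s$ is forward invariant. At $\theta=0$ we have $\theta'=1>0$, so $\theta$ cannot reach $0$. At $\theta=\pi/2+s$ we have $\theta'\le \sin^2 s - M\cos^2 s<0$ once $M$ is large, so $\theta$ cannot exceed $\pi/2+s$. By the same computation, $\theta'<0$ at $\theta=\pi/2-s$. Consequently, once $\theta$ drops below $\pi/2-s$ it never returns above it.

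\emph{Step 2 (transit time).} On the strip $\theta\in[\pi/2-s,\pi/2+s]$ we have $\sin^2\theta\ge 1/2$ for $s$ small, so $\theta'\le 1-M/2$. Therefore $\theta$ spends total time at most $2s/(M/2-1)$ in that strip. Choose $\lambda_0$ so large that this time is smaller than $x_2-x_1$. Combined with Step~1, this gives $\theta(x_2)\in(0,\pi/2-s)$, i.e.\ $F_{[x_1,x_2]}(w)\in V_{-s}$.

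\emph{Step 3 (norm).} While $\theta\in(0,\pi/2]$, we have $(\log r)'\ge0$, so the norm does not decrease. By Steps~1--2, the set where $\theta>\pi/2$ is a single initial interval $[x_1,b_1)$, present only if $\arg w>\pi/2$. On it, $\theta$ decreases monotonically from $\theta_0:=\arg w$ to $\pi/2$. Following the computation in Claim~1(c), write
\[
(\log r)' = -\cot\theta\,\theta' + \cot\theta .
\]
The first term integrates to $\int_{\pi/2}^{\theta_0}\cot\theta\,\rmd\theta=\log\sin\theta_0\ge\log\cos s$. The second term is bounded below by $-|\cot(\pi/2+s)|\,(b_1-x_1)$, and $b_1-x_1\le s/(M/2-1)$ is tiny. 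Hence
\[
|F_{[x_1,x_2]}(w)| \ge \cos s\, e^{-o(1)}\,|w| > \tfrac12|w|
\]
for $s$ small and $\lambda\ge\lambda_0$.

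I expect Step~3 to be the main obstacle: a naive bound on $(\log r)'$ over the excursion above $\pi/2$ is of size $-M$ times the excursion length, which is only $O(s)$ and not small. The change of variables to $\theta$ is what turns this into the harmless factor $\sin\theta_0$. The other potential difficulty, uniformity in $E\le K$, is absorbed by working throughout with the lower bound $M=\lambda c-K$ rather than with $E$ itself.
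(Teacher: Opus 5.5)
Your proof is correct and follows essentially the same route as the paper. Both use polar coordinates, a uniformly strong negative drift of $\theta$ on a strip around $\pi/2$ (hence a short transit time once $\lambda$ is large), and a change of variables to $\theta$ to control the norm loss near $\pi/2$. The differences are minor: you state the forward invariance explicitly, and you integrate the exact identity $(\log r)'=-\cot\theta\,\theta'+\cot\theta$ over the excursion above $\pi/2$ to get a loss factor $\sin\theta_0\ge\cos s$. The paper instead bounds $|\tfrac{\rmd}{\rmd\theta}\log r|\le 2|\cos\theta|$ on the whole strip, which gives a loss factor $e^{-4s^2}$.
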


\begin{claimproof}
Fix $C = C(x_1,x_2,s)>0$ large enough that
\begin{equation} \label{eq:claim3C-choice}
\sin^2(2s)-C\cos^2(2s) < -4s(x_2-x_1)^{-1}.
\end{equation}
Since $\varphi$ is continuous, we may then choose
$\lambda_0>0$ large enough that $\lambda\varphi(x) - E \geq C$ throughout $[x_1,x_2]$ for all $\lambda \geq \lambda_0$.
For any $x \in [x_1,x_2]$ for which $|\theta(x)-\pi/2|<2s$, \eqref{eq:argderivative} then yields
\begin{equation} \label{eq:claim3argbound}
\frac{\rmd \theta}{\rmd x}
= \cos^2\theta -(\lambda\varphi-E)\sin^2\theta
\leq \sin^2(2s)-C\cos^2(2s)
<-4s(x_2-x_1)^{-1}
.    
\end{equation}
Using \eqref{eq:claim3argbound} shows that for any $w \in V_s \setminus V_{-s}$, one has $\arg F_{[x_1,x_2]}(w) < \tfrac{\pi}{2}-s$.
Recalling that $\theta'>0$ when $\theta>0$ is small, this proves the first part of the claim.

Let us now justify the second part of the claim. Suppose $w\in V_s\backslash V_{-s}$ (otherwise there is nothing to prove, since $r(x)$ is increasing if $\theta(x)\in (0,\pi/2)$ and $\lambda$ is large; see (\ref{eq:normderiv})). From (\ref{eq:argderivative})  and (\ref{eq:normderiv}) we have, 
for $\theta\in \left[\frac{\pi}{2}-s, \frac{\pi}{2}+s\right]$ and $\lambda$ sufficiently large,
$$
\left|\frac{\rmd r}{\rmd\theta}\right|=\left|\frac{r\cos\theta\sin\theta (\lambda \varphi - E + 1)}{\cos^2\theta-(\lambda\varphi-E)\sin^2\theta}\right|\le
\frac{r{\left|\cos \theta\right|}}{\left|\frac{\cos^2\theta}{1+\lambda\varphi-E}-\frac{\lambda\varphi-E}{1+\lambda\varphi-E}\sin^2\theta\right|}\le 2r {\left|\cos \theta\right|}.
$$
Hence, if $w\in V_s\backslash V_{-s}$, then 
$$
\left|\frac{\rmd}{\rmd\theta}\log r\right|\le 2{\left|\cos\theta\right|}\le 2\left|\theta-\frac{\pi}{2}\right|\le 2s.
$$
 Set $r_0=|w|$ and $r_1=|F_{[x_1, x^*]}(w)|$, where $x^*\in [x_1, x_2]$ is such that $\theta(x^*)=\frac{\pi}{2}-s$. Then we have $\log r_1\ge \log r_0-4s^2$, and $r_1\ge e^{-4s^2}r_0\ge \frac{1}{2}r_0$, since $s$ is small. Since $r(x)$ is increasing for $\theta(x)\in \left(0, \frac{\pi}{2}\right)$, this implies that $|F_{[x_1, x_2]}(w)|\ge \frac{1}{2}|w|.$ 
\end{claimproof}
\medskip

\begin{claim}
If an interval $[x_1, x_2]\subseteq [a,b]$ is such that $\varphi(x)>0$ for all $x\in [x_1, x_2]$, then for any   $w\in V_0$, we have  $|F_{[x_1, x_2]}(w)|\to +\infty$ as $\lambda\to +\infty$ uniformly in $E\le K$.
\end{claim}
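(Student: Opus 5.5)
The plan is to exploit the comparison machinery of Lemmas~\ref{lem:autCompare}, \ref{lem:normgrowth}, and \ref{lem:C2growth}. First, shrink the interval so that the vector stays in the closed first quadrant throughout. By continuity and positivity of $\varphi$ on the compact interval $[x_1,x_2]$, set $\mu \eqdef \min_{[x_1,x_2]}\varphi>0$. For $\lambda$ large, $\lambda\varphi(x)-E \geq \lambda\mu - K \eqdef m \geq 1$ uniformly in $E\le K$ and $x\in[x_1,x_2]$.

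The coefficient matrix in \eqref{eq:nonautFlow} is then
\[
\begin{bmatrix} 0 & \lambda\varphi - E\\ 1 & 0\end{bmatrix},
\]
whose entries are nonnegative and dominate entrywise those of the constant matrix
\[
\mathbf{B}=\begin{bmatrix}0&m\\1&0\end{bmatrix}.
\]
Now take $w\in V_0$, so $w_1,w_2\ge 0$ and $w\neq 0$. Lemma~\ref{lem:autCompare} gives $y_i(x)\ge z_i(x)$ for $i=1,2$, where $z$ solves $z'=\mathbf{B}z$ with $z(x_1)=w$. Since $z$ stays in the closed first quadrant (a forward-invariant set for a nonnegative matrix), we also get $z_i\ge 0$. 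Hence $|y(x)|\ge|z(x)|$.

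Next I apply Lemma~\ref{lem:normgrowth} to $z$, with $R=z_1^2+z_2^2$, which gives $R''\ge 2(1+m)R$. Moreover $R'=2(mz_1z_2+z_1z_2)\ge 0$, because both coordinates are nonnegative. Lemma~\ref{lem:C2growth} with $k=2(1+m)$ then yields
\[
R(x_2)\ge |w|^2\cosh\bigl(\sqrt{2(1+m)}\,(x_2-x_1)\bigr).
\]
Consequently
\[
|F_{[x_1,x_2]}(w)| \ge |w|\cosh^{1/2}\bigl(\sqrt{2(1+\lambda\mu-K)}\,(x_2-x_1)\bigr),
\]
which tends to $+\infty$ as $\lambda\to\infty$, uniformly in $E\le K$.

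The main points to check are, first, that the domination hypothesis of Lemma~\ref{lem:autCompare} holds for the specific $\lambda$-dependent constant $m$, including the uniformity in $E\le K$ coming from $-E\ge -K$. Second, the sign condition $R'\ge 0$ must hold along the entire orbit; this is where the first-quadrant invariance of $z$ is used. If uniformity in $w$ is needed, the bound above is proportional to $|w|$, so it already holds uniformly for unit vectors in $V_0$.
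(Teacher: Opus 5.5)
Your proposal is correct and is exactly the paper's argument. The paper proves this claim in one line by invoking Lemmas~\ref{lem:autCompare}, \ref{lem:normgrowth}, and \ref{lem:C2growth} for \eqref{eq:nonautFlow} with initial vector $w$. You have filled in the details correctly: the $E$-independent comparison constant $m=\lambda\mu-K$, the nonnegativity of $z$ giving both $|y|\ge|z|$ and $R'\ge 0$, and applying the growth lemmas to the autonomous solution $z$ rather than to $y$ (which need not be $C^2$ when $\varphi$ is merely continuous).
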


\begin{claimproof}
This claim follows from applying  Lemmas~\ref{lem:autCompare}, \ref{lem:normgrowth}, and \ref{lem:C2growth} to \eqref{eq:nonautFlow} with the initial conditions given by the vector $w$. 
\end{claimproof}
\medskip

Taken together, these claims imply that the monodromy matrix $\monodromy_{E, \lambda}$ can be represented as a product of matrices such that after an application of the first one or two matrices in this product,  the image of each of the unit coordinate vectors will be in the first quadrant, remain there afterwards, and their images can be made arbitrarily large by choosing a sufficiently large value of $\lambda$. This implies that $\tr \monodromy_{E, \lambda}\to \infty$ as $\lambda\to \infty$, uniformly in $E \leq K$. 
\end{proof}

With Lemma~\ref{lem:traceAsymptoticsLargelambda}, we are now able to prove Theorem~\ref{t:main:nwd}.

\begin{proof}[Proof of Theorem~\ref{t:main:nwd}]
Fix $S \subseteq \R$ compact, and denote $\mathbf{A}=\monodromy_0$ and $\mathbf{B} = \monodromy_1$. 
Notice that $\mathbf{A}$ is independent of $\lambda$, and if the value of the energy $E$ is restricted to a fixed compact set $S$, then $\mathbf{A}$ is uniformly bounded: there exists $M = M(S)  >1$ such that $\|\mathbf{A}(E)\|\le M$ for every $E\in S$.  
  Pick  $K \gg M>1$ sufficiently large. On account of Lemma~\ref{lem:traceAsymptoticsLargelambda}, we know that $\left|\tr(\mathbf{B}(E,\lambda))\right|>2K$ for all $E \in S$ when $\lambda$ is large enough. 
We now consider two cases.

\vspace{3pt}

\textbf{\boldmath Case 1: $\left|\tr(\mathbf{A}\mathbf{B})\right|>2K^{1/3}$.}
Then,  we have
$$
\left|x_2\right|=\left|\frac{1}{2}\tr(\mathbf{A}\mathbf{B})\right|>K^{1/3}, \quad
\left|x_1\right|=\left|\frac{1}{2}\tr(\mathbf{B})\right|>K, \quad
\left|x_0\right|=\left|\frac{1}{2}\tr(\mathbf{A})\right|\le M,
$$
with $K\gg M$. 
It follows that the corresponding energy $E$ has an unbounded trace orbit (provided that $K$ is large enough) by standard arguments (e.g., \cite[Theorem~10.5.4]{DF2024ESO2}) and hence is not in the spectrum on account of Proposition~\ref{p.sigmab}.
\medskip

\textbf{\boldmath Case 2: $\left|\tr(\mathbf{A}\mathbf{B})\right|\le 2K^{1/3}$.} 
Defining $\{x_k\}$ as before, we have
$$
\left|x_2\right|=\left|\frac{1}{2}\tr(\mathbf{A}\mathbf{B})\right|\le K^{1/3}, \quad \left|x_1\right|=\left|\frac{1}{2}\tr(\mathbf{B})\right|>K, \quad  \left|x_0\right|=\left|\frac{1}{2}\tr(\mathbf{A})\right|\le M,
$$
we can bound the value of the Fricke--Vogt invariant:
\begin{align*}
I =     x_0^2+x_1^2+x_2^2-2x_0x_1x_2-1
    & \gtrsim K^2.
\end{align*}
Therefore, if $E \in S$ belongs to  the spectrum, the local Hausdorff dimension at $E$ can be estimated from above via Theorem~\ref{t.fiblocaldim}, and the upper bound on the Hausdorff dimension tends to zero as $\lambda\to \infty$, since $K\to \infty$ as $\lambda \to \infty$.
\end{proof}

We now turn to the proof of Theorem~\ref{t:counterexample}. We will in fact show that $C^1$ functions that alternate from positive to negative supply a family of counterexamples.
\begin{definition}
    Let us say that $f \in C^1([0,1])$ is a \emph{split} function if $f(0)=f(1)=0$ and there exists $x_* \in (0,1)$ such that
    \begin{enumerate}[label={\rm(\arabic*)}]
        \item $f(x)>0$ for $0<x<x_*$;
        \item $f(x)<0$ for $x_*<x<1$;
        \item $f'(x_*)<0$ and $f'(1)>0$.
        \end{enumerate}
     Notice that these assumptions force $f(x_*) = 0$.
\end{definition}

\begin{theorem} \label{t:counterexampleContinuous}
Suppose $f_0 =0 \cdot \chi_{[0,1)}$ and $f_1 \in C^1([0,1])$ is a split function.
Then, there exist $E \in \R$, $\lambda_k \uparrow \infty$ such that  $E\in \Sigma_{\lambda_k} := \Sigma(\lambda_k f_0,\lambda_k f_1)$ and 
\begin{equation}
\dim_\Hd^\loc(\Sigma_{\lambda_k},E)
=1
\end{equation} 
for all $k$.
\end{theorem}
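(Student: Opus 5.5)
The plan is to choose the energy so that the Fricke--Vogt invariant vanishes identically in $\lambda$, and then to find infinitely many couplings at which that energy lies in the spectrum. Since $f_0\equiv 0$, the matrix $\mathbf{A}=\monodromy_0(E)$ is the free transfer matrix over $[0,1]$. For $E=k^2>0$ it is
\[
\mathbf{A}(E)=\begin{bmatrix} \cos k & -k\sin k\\ k^{-1}\sin k & \cos k\end{bmatrix},
\]
so at $E=E_n:=(n\pi)^2$ one gets $\mathbf{A}=(-1)^n\idty$, independently of $\lambda$. Fix such an $E=E_n$, with $n\ge1$ arbitrary.

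\textbf{Step 1: the invariant vanishes.} Because $\mathbf{A}=\pm\idty$ commutes with $\mathbf{B}=\monodromy_1(E,\lambda)$ for every $\lambda$, \eqref{eq:FVIvanishIFFcommute} gives $I(E)=0$ for all $\lambda$. By Theorem~\ref{t.fiblocaldim}, it then suffices to show that $E\in\Sigma_{\lambda_k}$ along some sequence $\lambda_k\uparrow\infty$; the local dimension there is $D(0)=1$.

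\textbf{Step 2: reduction to a trace condition.} I would show that $E\in\Sigma_\lambda$ whenever $|\tr\mathbf{B}(E,\lambda)|\le 2$. Every $\monodromy_k(E)$ is a product of copies of $\mathbf{A}=\pm\idty$ and $\mathbf{B}$, hence equals $\pm\mathbf{B}^{m_k}$ for some integer $m_k$. If $|\tr\mathbf{B}|\le 2$, the half-traces $x_k(E)=\pm\tfrac12\tr\mathbf{B}^{m_k}$ are Chebyshev-type expressions in $\tfrac12\tr\mathbf{B}\in[-1,1]$, so they stay in $[-1,1]$. The trace orbit is then bounded, and Proposition~\ref{p.sigmab} gives $E\in\Sigma_\lambda$.

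\textbf{Step 3: producing the sequence $\lambda_k$.} I need $\lambda_k\to\infty$ with $|\tr\mathbf{B}(E,\lambda_k)|<2$. To get them, I would use the $1$-periodic operator $H^{\mathrm{per}}_\lambda=-\rmd^2/\rmd x^2+\lambda f_1$ (extended periodically) and its integrated density of states $k_\lambda(E)$.

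\emph{Key facts.} The map $\lambda\mapsto k_\lambda(E)$ is continuous in $\lambda$ at fixed $E$; this follows from norm-resolvent continuity together with the absence of eigenvalues for periodic operators. Moreover, $k_\lambda(E)$ is an integer precisely when $E$ lies in a gap or at a band edge of $H^{\mathrm{per}}_\lambda$. Consequently, whenever $k_\lambda(E)\notin\Z$, the energy $E$ is in the interior of a band and $|\tr\mathbf{B}|<2$.

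\emph{Growth of the IDS.} I would show $k_\lambda(E)\to\infty$ as $\lambda\to\infty$. Since $f_1<0$ on $(x_*,1)$, Dirichlet bracketing on a subinterval $J\Subset(x_*,1)$ where $f_1\le -c<0$ applies. It shows that the number of Dirichlet eigenvalues below $E$ on $J$, which is at least of order $\sqrt{\lambda c}\,|J|/\pi$, bounds the eigenvalue count per period from below, up to an $O(1)$ error. Equivalently, by Sturm oscillation the Prüfer angle of solutions at energy $E$ winds roughly $\sqrt{\lambda c}\,|J|/\pi$ times across $J$.

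\emph{Conclusion.} Continuity and unboundedness force $k_\lambda(E)$ to pass through every value $j+\tfrac12$ for all large integers $j$. This yields $\lambda_j\uparrow\infty$ with $|\tr\mathbf{B}(E,\lambda_j)|<2$.

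\textbf{Main obstacle.} I expect the delicate point to be making Step 3 rigorous. Two things need care: the continuity of $k_\lambda(E)$ in $\lambda$ at a fixed energy, and the claim that non-integer values of the IDS correspond exactly to band interiors. If the continuity argument is awkward, my first fallback is to work directly with the lifted rotation number of $\mathbf{B}(E,\lambda)$, i.e.\ the total Prüfer angle at $x=1$. This quantity is continuous in $\lambda$ and tends to infinity by Sturm comparison on $J$. If $\mathbf{B}$ were hyperbolic, or had $|\tr|=2$, for all large $\lambda$, then the rotation number would be locked to a fixed multiple of $\pi$ over that range, which is a contradiction.

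In this approach only the negativity of $f_1$ on $(x_*,1)$ appears to be used. The extra conditions in the split definition ($f'(x_*)<0$, $f'(1)>0$) would be needed only if one insisted on energies $E$ not of the form $(n\pi)^2$. The resulting potential pieces lie in $C_0([0,1])$, which also yields Theorem~\ref{t:counterexample}.
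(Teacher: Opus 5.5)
Your proof is correct. After the common first step, it takes a genuinely different and much softer route than the paper.

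Both arguments fix $E$ with $\sqrt E\in\pi\Z$ (the paper takes $E=4\pi^2n^2$, so $\mathbf{A}=\idty$), note that $I(E)=0$ for every $\lambda$, and reduce to finding $\lambda_k\uparrow\infty$ with $E\in\Sigma_{\lambda_k}$.

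The paper then argues quantitatively. It splits $\mathbf{B}=\mathbf{B}_2\mathbf{B}_1$ at $x_*$ and proceeds in three steps:
\begin{enumerate}
\item It shows $\log\|\mathbf{B}_1\|\gtrsim\sqrt\lambda$, with the image of the expanding direction exponentially close to $e_1$ (Lemma~\ref{lem:B1big} and Proposition~\ref{prop:funfact}).
\item It bounds $\log\|\mathbf{B}_2\|\lesssim\lambda^{9/20}$ by the micro/meso/macroscopic analysis of Lemma~\ref{lem:zeroesonthe boundary}. This is where $C^1$, $f'(x_*)<0$ and $f'(1)>0$ enter.
\item It uses the $\asymp\sqrt\lambda$ rotation on $[x_*,1]$ (Lemma~\ref{l.rangeofrotation}) to tune $\lambda$ until $\mathbf{B}$ carries $U(\mathbf{B})$ to $S(\mathbf{B})$, which forces $\tr\mathbf{B}=0$.
\end{enumerate}

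You instead observe that $\mathbf{A}=\pm\idty$ collapses the trace orbit to $\pm\tfrac12\tr\mathbf{B}^m$. So it suffices that $E$ lie in the spectrum of the $1$-periodic operator with potential $\lambda f_1$. You obtain this by an intermediate-value argument for the rotation number, which
\begin{itemize}
\item is continuous in $\lambda$,
\item lies in $\pi\Z$ exactly when the projective action of $\mathbf{B}$ has a fixed point, i.e.\ when $|\tr\mathbf{B}|\ge 2$, and
\item tends to $\infty$ because $f_1$ is negative on an interval $J$.
\end{itemize}

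What your route buys is brevity and generality. It uses only that $f_1$ is negative on some open interval: no $C^1$ regularity, no conditions at $x_*$ or $1$, not even a positive part. In particular it confirms the paper's remark suspecting that condition (3) can be dropped. At the levels $(j+\tfrac12)\pi$ it even yields $\tr\mathbf{B}=0$, exactly as in the paper. What the paper's route buys is large-coupling asymptotics for norms and rotation, which the authors consider of independent interest but which the statement itself does not need.

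When you write it up, use the rotation-number version rather than the IDS version, and be precise on two points.
\begin{itemize}
\item The quantity locked to $\pi\Z$ at hyperbolic parameters is the translation number $\rho(\lambda)=\lim_{m\to\infty}\tilde F_\lambda^m(\theta_0)/m$ of the lift $\tilde F_\lambda(\theta_0)=\theta(1;\theta_0)$. It is not the one-period angle increment you call ``the total Pr\"ufer angle at $x=1$''. The two differ by less than $\pi$, so growth transfers, but the locking statement holds only for $\rho$.
\item For $\rho\to\infty$ you must note that the $\gtrsim\sqrt{\lambda c}\,|J|/\pi$ crossings of $\pi\Z$ made on each copy of $J$ cannot be undone elsewhere in the period. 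This holds because $\theta'=1$ whenever $\theta\in\pi\Z$, so crossings only occur upward.
\end{itemize}
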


\begin{remark}
    We suspect that Theorem \ref{t:counterexampleContinuous} still holds if assumption (3) in the definition of the split function is removed, but most likely a different argument is required.
\end{remark}

Writing a solution to the nonautonomous flow \eqref{eq:nonautFlow} in polar coordinates as in the proof of Lemma~\ref{lem:traceAsymptoticsLargelambda}, and introducing $L=\log r$, \eqref{eq:argderivative} and \eqref{eq:normderiv} can be rewritten as
\begin{equation}\label{e.pcoord}
  \begin{cases}
    &\displaystyle{\frac{\rmd \theta}{\rmd x}}
    = 1- (\lambda\varphi-{E}+1)\sin^2\theta,\\[5mm]
    &\displaystyle{\frac{\rmd L}{\rmd x}}
    = \cos \theta\sin\theta (\lambda\varphi - E + 1).
  \end{cases}
\end{equation}

Let us consider the solutions of this system separately on the intervals $(0,x_*)$ and $(x_*,1)$.
The arguments that follow involve various constants that may depend on $f$ and $E$, but which must be independent of $\lambda$. 
We write $f \lesssim g$ if $f \leq Cg$ for such a constant $C$, and $f \asymp g$ if both $f \lesssim g$ and $g \lesssim f$. Here we emphasize that the (eventual) applications of Lemmata~\ref{lem:B1big}--\ref{l.Mepsminusone} below will be for a fixed energy, so having energy-dependent constants is not problematic for the applications that we have in mind.

\begin{lemma} \label{lem:B1big}
Assume that $\varphi$ is continuous on $[a,b]$ and strictly positive on $(a,b)$, and fix $E \in \R$. Then $L(b) - L(a) \gtrsim \sqrt{\lambda}$, where $L(x)$ is any solution of the system \eqref{e.pcoord} with   $\theta(a) \in (0,\pi/2)$. 
\end{lemma}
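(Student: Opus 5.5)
The plan is to split $[a,b]$ into a bounded number of pieces (the number independent of $\lambda$). On one fixed interior piece, where $\varphi$ is bounded below, I extract the growth $\gtrsim\sqrt{\lambda}$. On every other piece I show that $L$ drops by at most $O(1)$ while the solution vector stays in a controlled cone around the first quadrant. This mirrors the proof of Lemma~\ref{lem:traceAsymptoticsLargelambda}, and I reuse its claims; they require only $\varphi \geq 0$, which holds here, and $E \leq K$ with $K = \max(E,1)$. Throughout, $E$ is fixed and constants may depend on $E$, $\varphi$, $s$, but not on $\lambda$. The statement only has content for large $\lambda$, so I assume $\lambda \geq \lambda_0$.

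\emph{Step 1 (partition).} Fix a small $s>0$ and take $\varepsilon=\varepsilon(\varphi,K,s)$ from Claim~1. Since $\varphi>0$ on $(a,b)$, I choose a partition $a=t_0<t_1<\dots<t_N=b$ such that:
\begin{itemize}
\item the first and last pieces have length $<\varepsilon$;
\item every other piece $[t_i,t_{i+1}]$ lies in $(a,b)$, hence $\varphi>0$ on it;
\item one interior piece $[c,d]$ satisfies $\varphi\ge\delta>0$ on it.
\end{itemize}
Both $N$ and the partition depend only on $\varphi,E,s$.

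\emph{Step 2 (controlling the pieces other than $[c,d]$).} The initial vector $w=y(a)$ lies in $V_0$ because $\theta(a)\in(0,\pi/2)$.
\begin{itemize}
\item On the first piece, of length $<\varepsilon$, Claim~1(a),(c) gives $y(t_1)\in V_s$ with $|y(t_1)|>\tfrac12|w|$.
\item On each interior piece, $\varphi>0$ throughout, so Claim~3 (for $\lambda\ge\lambda_0(t_i,t_{i+1},\varphi,K,s)$) sends $V_s$ into $V_{-s}\subset V_0$ while losing at most a factor $\tfrac12$ in norm.
\item On the last piece, Claim~1 again loses at most a factor $\tfrac12$.
\end{itemize}
Inducting over the pieces, the vector enters $[c,d]$ in $V_0$, the open first quadrant. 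The total loss in $L$ over all pieces except $[c,d]$ is at most $(N+1)\log 2=O(1)$. After $[c,d]$ the vector lies in $V_0$, since for $\lambda$ large $\lambda\varphi-E>0$ on $[c,d]$ and the first quadrant is forward invariant there; the remaining pieces are then handled as above.

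\emph{Step 3 (growth on $[c,d]$).} For $\lambda\ge 2(E+1)/\delta$ we have $\lambda\varphi(x)-E\ge m:=\lambda\delta/2\ge1$ on $[c,d]$. Apply Lemma~\ref{lem:autCompare} with
\[
\mathbf A(x)=\begin{bmatrix}0&\lambda\varphi-E\\1&0\end{bmatrix},
\qquad
\mathbf B=\begin{bmatrix}0&m\\1&0\end{bmatrix},
\]
and initial vector $y(c)$ in the first quadrant. It gives $y_i\ge z_i\ge0$, where $z$ solves $z'=\mathbf Bz$, so $|y(d)|\ge|z(d)|$. For $R=z_1^2+z_2^2$:
\begin{itemize}
\item Lemma~\ref{lem:normgrowth} gives $R''\ge 2(1+m)R$;
\item $R'=2(1+m)z_1z_2\ge0$, because $z$ stays in the first quadrant.
\end{itemize}
Lemma~\ref{lem:C2growth} with $k=2(1+m)$ then yields $R(d)\ge \tfrac12 R(c)\,e^{\sqrt{2(1+m)}(d-c)}$. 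Hence
\[
L(d)-L(c)\ge \tfrac12\sqrt{\lambda\delta}\,(d-c)-\log 2 .
\]
Adding the $O(1)$ losses from Step 2 gives $L(b)-L(a)\ge C^{-1}\sqrt\lambda - C$, which is $\gtrsim\sqrt\lambda$ for $\lambda$ large.

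\emph{Main obstacle.} The delicate point is the two end pieces. There $\varphi\to0$, so $\lambda\varphi-E$ may be negative, and the vector may rotate slightly past $\pi/2$, or the norm may shrink. This is exactly why the end pieces are taken shorter than $\varepsilon$: Claim~1 then bounds both the rotation (at most into $V_s$) and the norm loss, uniformly in $\lambda$.

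A second subtlety is that the thresholds $\lambda_0$ from Claim~3 depend on each interior piece. This is harmless because there are finitely many fixed pieces, so I take the maximum. I also need to check that Claim~1 applies as stated even though it was formulated under the standing hypotheses of Lemma~\ref{lem:traceAsymptoticsLargelambda}; its proof only uses $\varphi\ge0$ and $E\le K$, which hold here.
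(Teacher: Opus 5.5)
Your proof is correct, but it follows a different route from the paper's. The paper never uses the cone claims. On a subinterval where $\varphi$ is bounded below, it notes that the zero $\theta^*$ of $\rmd\theta/\rmd x$ lies at height $\asymp\lambda^{-1/2}$, so $\theta$ stays trapped in $(0,\pi/2)$. It then applies Lemma~\ref{lem:C2growth} directly to the scalar solution $y$, which satisfies $y''\ge C\lambda y$ and $y'\ge 0$, and uses $L\ge\log y$. The end pieces $[a,a+\varepsilon]$ and $[b-\varepsilon,b]$ are handled by taking $\varepsilon$ so small that $\theta$ never leaves $(0,\pi/2)$ there, which gives $L'\ge -E$.

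That argument is shorter. However, its $\varepsilon$, and hence its implied constant, depends on $\theta(a)$ as well as on $E$. This is harmless for the later application to $\log\|\mathbf{B}_1\|$.

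Your version works differently:
\begin{itemize}
\item it fixes the end pieces at length $<\varepsilon(\varphi,K,s)$ independently of the initial vector;
\item it tolerates an overshoot into $V_s$ and undoes it with Claim~3;
\item it makes the fourth claim in the proof of Lemma~\ref{lem:traceAsymptoticsLargelambda} quantitative via Lemmas~\ref{lem:autCompare}--\ref{lem:C2growth}.
\end{itemize}
This costs more bookkeeping, but it gives a bound uniform over all $\theta(a)\in(0,\pi/2)$.

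Two small repairs are needed.

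First, Claim~1 only places the vector in $V_s$, which can lie outside the closed first quadrant that Lemma~\ref{lem:autCompare} requires. Your assertion that the vector enters $[c,d]$ in $V_0$ therefore needs $[c,d]$ to be preceded by at least one interior piece. Alternatively, apply Claim~3 on the left half of $[c,d]$ first.

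Second, for $E<-1$ the threshold $\lambda\ge 2(E+1)/\delta$ does not give $m\ge 1$. Use $\lambda\ge 2(|E|+1)/\delta$ instead.
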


\begin{proof}
Without loss of generality, consider $L(a)=0$.
The assumption  $\theta(a) \in (0,\pi/2)$ gives $y(a), y'(a) >0$.
Let us consider first the case in which $\varphi$ is strictly positive on $[a,b]$.
By assumption we can choose $C_2 > C_1 > 0$ such that for all $\lambda$ large enough, we have for all $x\in[a,b]$,
$$
C_1 \lambda < \lambda\varphi(x)-E+1 < C_2 \lambda.
$$
Note that for such $\lambda$, any zero of the right hand side of the first equation of \eqref{e.pcoord}, $\theta^*(x)$, must obey
\begin{equation} \label{eq:theta*}
\frac{1}{\sin^2 \theta^*(x)} = \lambda\varphi(x)-E+1 \in (C_1 \lambda, C_2 \lambda).
\end{equation}
We choose $\theta^*(a) \in (0,\pi/2)$ and then continuously choose $\theta^*$ on $(a,b)$.
In view of \eqref{eq:theta*}, we note that there are constants ${C}_2' >  {C}_1' >0$ such that for all $x$ and $\lambda$ under consideration:
\begin{equation}
    \theta^*(x) \in \left( \frac{{C}_1'}{\sqrt{\lambda}} ,
    \frac{{C}_2'}{\sqrt{\lambda}}\right).
\end{equation}
We also observe that $\theta'(x) > 0$ (resp., $ < 0$) if $\sin^2 \theta(x) < \sin^2 \theta^*(x)$ (resp., $\sin^2 \theta(x) > \sin^2 \theta^*(x)$.) 
In particular, $\theta(x)$ remains in $(0,\pi/2)$ for all $x$, so the result follows directly by applying Lemma~\ref{lem:C2growth} to the function $R=y$ (and noting that $L \geq \log y$).

Now consider the case in which $\varphi$ vanishes at one or more endpoints  by noting first that \eqref{e.pcoord} implies that $\theta'=1$ when $\theta=0$ and $\theta'\leq E+1$ everywhere, so since $\theta(a) \in (0,\pi/2)$, we may fix $\varepsilon>0$ small enough to ensure that $\theta(a+\varepsilon) \in (0,\pi/2)$ (the smallness condition depends on $E$, which is fixed for this argument).
We may then apply the argument from the previous case on $[a+\varepsilon,b-\varepsilon]$   to obtain
\[ L(b-\varepsilon) - L(a+\varepsilon) \gtrsim \sqrt{\lambda}. \]
On the boundary intervals, we have $L' \geq -E$, so the total change in $L$ is
\begin{equation}
    L(b) - L(a) \gtrsim \sqrt{\lambda} - 2E\varepsilon \gtrsim \sqrt{\lambda},
\end{equation}
where we permit an $E$-dependent constant in the final estimate.
\end{proof}

Let us now study what happens on an interval on which  $\varphi \leq 0$. 

\begin{lemma}\label{l.rangeofrotation}
Assume that $\varphi$ is continuous and strictly negative on $(c,d)$ and that $E > 0$. If $\theta$ denotes a solution of \eqref{e.pcoord}, 
then
\begin{equation}\label{e.largeangle}
\theta(d) - \theta(c) \asymp \sqrt{\lambda} \; \text{ for large } \lambda.
\end{equation}
\end{lemma}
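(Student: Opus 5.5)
The plan is to work directly with the angle equation in \eqref{e.pcoord}. Write $q(x) \eqdef E - \lambda\varphi(x) - 1$, so that
\[
\theta'(x) = 1 + q(x)\sin^2\theta = \cos^2\theta + (E - \lambda\varphi(x))\sin^2\theta.
\]
On $(c,d)$ we have $\varphi<0$ and $E>0$, hence $E-\lambda\varphi>0$ and $\theta'>0$ throughout. The angle therefore increases monotonically, which gives the lower bound direction a clean structure. The key tool is the standard Prüfer comparison: on any subinterval where $E-\lambda\varphi$ is sandwiched between two constants $m_1\le m_2$, the time needed for $\theta$ to advance by $\pi$ is sandwiched between $\pi/\sqrt{m_2}$ and $\pi/\sqrt{m_1}$. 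This follows from integrating $\rmd x = \rmd\theta/(\cos^2\theta+m\sin^2\theta)$ over a period, since $\int_0^\pi \rmd\theta/(\cos^2\theta+m\sin^2\theta) = \pi/\sqrt m$, and from the monotonicity of the integrand in $m$.

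\textbf{Upper bound.} Since $\varphi$ is continuous on $[c,d]$ (understood as continuous on the closure, as for the split functions in the intended application), we have $-\varphi\le \|\varphi\|_\infty$, so $E - \lambda\varphi \le E+\lambda\|\varphi\|_\infty \le 2\lambda\|\varphi\|_\infty$ for large $\lambda$. Comparing with the constant coefficient $m_2=2\lambda\|\varphi\|_\infty$, each advance of $\pi$ takes time at least $\pi/\sqrt{m_2}$. Hence
\[
\theta(d)-\theta(c)\le \pi\bigl((d-c)\sqrt{m_2}/\pi+1\bigr)\lesssim\sqrt\lambda.
\]
Equivalently, one can use the crude bound $\theta'\le \max(1,E-\lambda\varphi)$ combined with the Sturm comparison, which avoids counting half-turns.

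\textbf{Lower bound.} Pick a compact subinterval $[c',d']\subset(c,d)$, say the middle half. By continuity and strict negativity, $-\varphi\ge\delta>0$ there. Then $E-\lambda\varphi\ge\lambda\delta$ on $[c',d']$, so each advance by $\pi$ takes time at most $\pi/\sqrt{\lambda\delta}$. Thus $\theta(d')-\theta(c')\ge \pi\lfloor (d'-c')\sqrt{\lambda\delta}/\pi\rfloor\gtrsim\sqrt\lambda$. Because $\theta'>0$ on the remaining pieces $[c,c']$ and $[d',d]$, they only add to the total, and the lower bound follows.

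The main obstacle is making the half-period comparison rigorous with a nonconstant coefficient. I would handle this via the Sturm comparison theorem on the angle ODE: if $\theta_1'=F_1(x,\theta_1)$ and $\theta_2'=F_2(\theta_2)$ with $F_1\ge F_2$ and equal initial data, then $\theta_1\ge\theta_2$. Here $F(\theta)=\cos^2\theta+m\sin^2\theta$ is increasing in $m$, so the solution with the variable coefficient can be compared against the explicitly solvable constant-coefficient one, whose angle advances by exactly $\pi$ every $\pi/\sqrt m$ units of time. The endpoint behavior, where $\varphi$ may vanish, causes no trouble because of the monotonicity $\theta'>0$ and the uniform upper bound on $-\varphi$. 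The implicit constants depend only on $\varphi$, $E$, $c$, and $d$, as required.
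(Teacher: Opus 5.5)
Your argument is correct and is essentially the paper's proof. Both compare the angle equation with the constant-coefficient one through $\int_0^\pi \rmd\theta/(\cos^2\theta+m\sin^2\theta)=\pi/\sqrt m$; you use ODE comparison, while the paper inverts $x\mapsto\theta$ and integrates $\rmd x/\rmd\theta$. Both also trim near the endpoints and use $\theta'>0$ for the lower bound, and use the one-sided bound from boundedness of $\varphi$ for the upper bound.

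The only slip is your parenthetical ``crude bound'' $\theta'\le\max(1,E-\lambda\varphi)$: on its own it gives only $\theta(d)-\theta(c)=O(\lambda)$, not $O(\sqrt\lambda)$. The constant-coefficient comparison must remain the actual upper-bound argument.
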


\begin{proof}
Consider first the case when $\varphi$ is strictly negative throughout $[c,d]$.
Since $\varphi$ is strictly negative and bounded away from zero, one can choose $C>0$ such that  $\lambda\varphi(x)-{E}+1 \le - C \lambda$ for all sufficiently large $\lambda$.
In view of \eqref{e.pcoord}, one has
\begin{equation}
    \frac{\rmd \theta}{\rmd x} = 1-(\lambda\varphi-E+1)\sin^2\theta\geq 
    1+ C\lambda\sin^2\theta.
\end{equation}
In particular, the map $x \mapsto \theta(x)$ can be inverted. 
Thus, viewing $x$ as a function of $\theta$ and changing variables, we have
\begin{align*}
d-c 
 = \int_{c}^{d}\rmd x 
& = \int_{\theta(c)}^{\theta(d)}\frac{\rmd \theta}{1 -(\lambda\varphi(x(\theta))-{E}+1) \sin^2\theta } \\
& \le \int_{\theta(c)}^{\theta(d)}\frac{\rmd\theta}{1+ C \lambda \sin^2\theta} \\
& = \frac{1}{C\lambda} \int_{\theta(c)}^{\theta(d)}\frac{\rmd \theta}{\sin^2\theta + \frac{1}{C\lambda}}.
\end{align*}
Since $\int_0^\pi \frac{\rmd \theta}{\sin^2\theta+a^2}=\frac{\pi}{a\sqrt{1+a^2}}$, using $\pi$-periodicity of the integrand, we obtain the lower bound in \eqref{e.largeangle}. A completely analogous proof establishes the upper bound in \eqref{e.largeangle}, noting that the boundedness of $\varphi$ allows us to estimate $\lambda\varphi(x)-{E}+1 \ge - \widetilde C \lambda$ for $\widetilde C > 0$ and $\lambda$ sufficiently large.

Now consider the case in which $\varphi$ vanishes at one or more endpoints. Fixing $\varepsilon>0$, we may apply the previous argument on $[c+\varepsilon,d-\varepsilon] \subseteq (c, d)$  to obtain
\begin{equation}
    \theta(d-\varepsilon) - \theta(c+\varepsilon) \asymp \sqrt{\lambda}.
\end{equation}
On the boundary intervals, we notice that we still have $\rmd\theta/\rmd x \geq E \sin^2\theta  \geq 0$ and the one-sided bound $\lambda \varphi - E + 1 \geq - \widetilde C\lambda$ for a suitable $\widetilde  C>0$, which enables us to reprise the previous argument to deduce
\[0 \leq \theta(c+\varepsilon)-\theta(c) \lesssim \sqrt{\lambda}\]
and a similar statement on the other boundary interval, which suffices to derive the desired bounds.
\end{proof}

{The final task is to estimate the asymptotics of $L$ on $[x_*,1]$, the interval of non-positivity.} This is the most delicate part of the analysis, which we break into yet smaller steps. Without loss of generality, we can assume $\min \varphi = -1$. 
We will then break $[x_*,1]$ into three regimes: for well-chosen $0< \varepsilon_1 < \varepsilon_2 < 1$, we will consider the \emph{microscopic} ($-\lambda^{\varepsilon_1} \leq \lambda \varphi \leq 0$), \emph{mesoscopic} ($-\lambda^{\varepsilon_2} \leq \lambda \varphi \leq -\lambda^{\varepsilon_1}$), and \emph{macroscopic} ($-\lambda  \leq \lambda\varphi \leq - \lambda^{-\varepsilon_2}$) regions.
One further technical point merits attention here: the intervals defining the various regimes are themselves dependent on $\lambda$, so to appropriately track the $\lambda$-dependence, we must incorporate the length of the intervals into some of the estimates below.

\begin{lemma} \label{lem:B2small}
Assume that $\varphi$ is continuously differentiable and strictly negative on $[c,d]$ and that $E>0$.
If $L(x)$ is a solution of \eqref{e.pcoord} then $|L(d) - L(c)| \lesssim \log \lambda$ 
 as $\lambda\to \infty$.
\end{lemma}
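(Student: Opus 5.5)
The plan is to pass from the Prüfer variables $(\theta,L)$ of \eqref{e.pcoord}, which are adapted to the unperturbed flow, to a \emph{modified} (WKB-type) Prüfer transformation adapted to the large oscillation frequency on $[c,d]$. First I will check that the frequency is uniformly large there. Put $q(x) \eqdef E - \lambda\varphi(x)$. Since $\varphi$ is continuous and strictly negative on the compact interval $[c,d]$ and $E>0$, there are constants $0<C_1<C_2$ (depending on $\varphi$ and $E$, not on $\lambda$) with
\[
C_1\lambda \le q(x) \le C_2\lambda, \qquad x\in[c,d],
\]
for all large $\lambda$. Since $\varphi\in C^1$, we also have $|q'(x)| = \lambda|\varphi'(x)| \le \lambda\|\varphi'\|_\infty$.

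For a solution $y$ of \eqref{e.initial}, so that $(y_1,y_2)=(y',y)$ solves \eqref{eq:nonautFlow}, I will write
\[
y = \rho\, q^{-1/4}\sin\psi, \qquad y' = \rho\, q^{1/4}\cos\psi,
\]
with $\rho>0$ and $\psi$ continuous. A direct computation, using $y''=-qy$, gives
\[
\frac{\rmd}{\rmd x}\log\rho = -\frac{q'}{4q}\cos 2\psi, \qquad \frac{\rmd\psi}{\rmd x} = q^{1/2} + \frac{q'}{4q}\sin 2\psi.
\]
I will verify the first identity by differentiating $\rho^2 = q^{1/2}y^2 + q^{-1/2}(y')^2$. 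The key point is that the $\rho$-equation contains only the logarithmic derivative of $q$, which is bounded independently of $\lambda$:
\[
\bigl|\log\rho(d) - \log\rho(c)\bigr| \le \int_c^d \frac{|q'|}{4q}\,\rmd x \le \frac{\|\varphi'\|_\infty}{4C_1}\,(d-c) \lesssim 1 .
\]
This is the step where the $C^1$ hypothesis is used.

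It remains to compare $L=\log r$, where $r^2 = y_1^2+y_2^2 = (y')^2+y^2$, with $\log\rho$. We have
\[
r^2 = \rho^2\bigl(q^{1/2}\cos^2\psi + q^{-1/2}\sin^2\psi\bigr).
\]
Since $q\ge C_1\lambda \ge 1$ for large $\lambda$, the bracket lies between $q^{-1/2}$ and $q^{1/2}$. Hence $|L(x)-\log\rho(x)| \le \tfrac14\log q(x) \le \tfrac14\log(C_2\lambda)$ at every $x\in[c,d]$. Combining this at $x=c$ and $x=d$ with the bound on $\log\rho$ gives
\[
|L(d)-L(c)| \le \tfrac12\log(C_2\lambda) + O(1) \lesssim \log\lambda .
\]

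I do not expect a serious obstacle. The only points needing care are the algebra behind the modified Prüfer equations and the uniformity of the constants in $\lambda$; the latter comes from $\varphi$ being bounded away from zero on the \emph{closed} interval. The $\log\lambda$ loss is genuine: it comes only from the mismatch between the Euclidean norm $r$ and the energy-adapted norm $\rho$ at the two endpoints, and not from any accumulation along the interval.
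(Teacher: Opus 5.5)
Your proof is correct, and it takes a genuinely different route from the paper's. The paper stays in the Pr\"ufer variables $(\theta,L)$ of \eqref{e.pcoord}. It reparametrizes by $\theta$ (using $\theta'>0$) and splits the $\theta$-range, which has length $\asymp\sqrt{\lambda}$ by Lemma~\ref{l.rangeofrotation}, into periods of length $\pi$. It then notes that $\int \rmd L/\rmd\theta$ over one period would vanish if $\varphi$ were frozen. Since $x$ moves by $\lesssim\lambda^{-1/2}$ per period, the Lipschitz bound on $\varphi$ shows each period contributes $\lesssim\lambda^{-1/2}$. The $\sim\sqrt{\lambda}$ full periods therefore give $O(1)$, and the incomplete period gives the $O(\log\lambda)$ via explicit trigonometric integrals.

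Your amplitude--phase (Liouville/WKB) variables build that per-period cancellation into the change of variables. Because $\lambda$ cancels in $q'/q$, the equation $(\log\rho)'=-\frac{q'}{4q}\cos 2\psi$ gives the $O(1)$ bulk bound pointwise, with no averaging. The $\log\lambda$ then appears transparently as the endpoint distortion $|L-\log\rho|\le\frac14|\log q|$. Both proofs share the same architecture (bounded bulk, logarithmic boundary loss). Yours is shorter, needs neither Lemma~\ref{l.rangeofrotation} nor the trigonometric integrals, and gives the explicit bound $\frac12\log\lambda+O(1)$.

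Your method also reaches further than the paper's. Since $q=E-\lambda\varphi\ge E>0$ wherever $\varphi\le 0$, your estimate reads $|\log\rho(d)-\log\rho(c)|\le\frac14\int_c^d|(\log q)'|\,\rmd x$. In the setting of Lemma~\ref{lem:zeroesonthe boundary}, $\varphi$ vanishes at the endpoints and is monotone near them, so this integral is $O(\log\lambda)$. There $q(c)=q(d)=E$ makes the endpoint distortion $O(1)$. The same argument would thus give $\log\lambda$ in place of $\lambda^{9/20}$, with no need for the microscopic/mesoscopic/macroscopic decomposition.

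The paper's approach mainly buys uniformity of presentation. It works in the same coordinates as Lemmas~\ref{lem:B1big} and~\ref{l.rangeofrotation}, and its per-period template is reused in Lemma~\ref{l.Mepsminusone}.
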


\begin{proof}
Notice that \eqref{e.pcoord} implies  
\begin{equation}\label{e.dLdtheta}
\frac{\rmd L}{\rmd \theta}
=\frac{\cos \theta \sin\theta}{-\frac{1}{E-\lambda\varphi(x(\theta))-1} - \sin^2\theta},
\end{equation}
where we use monotonicity to view $x$ as a function of $\theta$ as in the proof of Lemma~\ref{l.rangeofrotation}; thus, our goal is to change variables and use
\begin{equation}
    L(d) - L(c)
    = \int_{\theta(c)}^{\theta(d)} \frac{\rmd L}{\rmd \theta} \, \rmd \theta.
\end{equation}
As before, we can assume that $\lambda$ is large enough that $\lambda \varphi(x) - E +1 \leq - C\lambda$ for all $x$ and $\lambda$ under consideration.
As in the proof of Lemma~\ref{l.rangeofrotation}, using
\begin{equation} \label{eq:dxdtheta}
\frac{\rmd x}{\rmd \theta}
= \frac{1}{1 - (\lambda\varphi - E + 1)\sin^2\theta}
\leq
\frac{1}{1+ C\lambda \sin^2\theta},
\end{equation}
we see that on any $\theta$ interval of the form $[\pi n, \pi(n+1)]$, $x$ can change by no more than $\lesssim \lambda^{-1/2}$.

Under the given assumptions, $\varphi(x)$ is Lipschitz continuous. 
Due to Lemma~\ref{l.rangeofrotation},  $\theta$ is changing over some interval $[\theta(c), \theta(d)]$, where $\theta(d) - \theta(c) \asymp \sqrt{\lambda}$. 
Let us split that interval into sub-intervals of length $\pi$. 
Notice that if $\varphi$ were constant, the integral of the right hand side of \eqref{e.dLdtheta}  over $[\pi n, \pi(n+1)]$ would be zero (since the denominator is symmetric with respect to the center of the interval $[\pi n, \pi(n+1)]$, and the numerator is anti-symmetric). 

By Lipschitz continuity of $\varphi$ we have
$$
\max_{[\pi n, \pi(n+1)]}\varphi(x(\theta))-\min_{[\pi n, \pi(n+1)]}\varphi(x(\theta))\lesssim  \frac{1}{ \sqrt{\lambda} }.
$$
If $|\varphi(x(\theta)) - \varphi(x(\theta_0))| \lesssim \lambda^{-1/2}$, then
$
-\frac{1}{E-\lambda\varphi(x(\theta))-1}
$ 
is equal to $
-\frac{1}{E-\lambda\varphi(x(\theta_0))-1}
$ up to a correction of order at most $\lambda^{-3/2}$. Therefore, since 
$$
\int_{\pi n}^{\pi (n+1)}\frac{\cos \theta \sin\theta}{-\frac{1}{E-\lambda\varphi(x(\theta_0))-1} -\sin^2\theta}\, \rmd\theta=0,
$$
we claim that the integral 
$$
\int_{\pi n}^{\pi (n+1)}\frac{\cos \theta \sin\theta}{ -\frac{1}{E-\lambda\varphi(x(\theta))-1}  -\sin^2\theta} \, \rmd\theta
$$
has order at most $\lambda^{-1/2}$. Indeed, we have
\begin{align}\nonumber
&\left|\int_{\pi n}^{\pi (n+1)}\frac{\cos \theta \sin\theta}{ -\frac{1}{E-\lambda\varphi(x(\theta))-1}  -\sin^2\theta} \, \rmd\theta\right|\\
\nonumber
& \qquad = \left|\int_{\pi n}^{\pi (n+1)}\frac{\cos \theta \sin\theta}{ -\frac{1}{E-\lambda\varphi(x(\theta))-1}  -\sin^2\theta} \, \rmd\theta-\int_{\pi n}^{\pi (n+1)}\frac{\cos \theta \sin\theta}{ -\frac{1}{E-\lambda\varphi(x(\theta_0))-1}  -\sin^2\theta} \, \rmd\theta\right| \\
\nonumber
& \qquad \lesssim \frac{1}{\lambda^{3/2}} \int_{\pi n}^{\pi (n+1)}\frac{|\cos \theta \sin\theta|}{(a^2 +\sin^2\theta)^2} \, \rmd\theta,
\end{align}
where $a^2=\min_{\theta\in[\pi n, \pi(n+1)]} \frac{1}{E-\lambda\varphi(x(\theta))-1}\asymp \frac{1}{\lambda}$. 

Using
$$
\int_0^{\pi/2}\frac{\cos \theta \sin \theta}{(a^2+\sin^2 \theta)^2}\rmd \theta
=\frac{1}{2a^2(a^2+1)},
$$
we get 
\begin{equation} \label{eq:trigIntegralBoundCombined}
\left|\int_{\pi n}^{\pi (n+1)}\frac{\cos \theta \sin\theta}{ -\frac{1}{E-\lambda\varphi(x(\theta))-1}  -\sin^2\theta} \, \rmd\theta\right| \lesssim \frac{1}{\sqrt{\lambda}}.
\end{equation}

Since $\theta(d)-\theta(c) \asymp \sqrt{\lambda}$, in the case in which the interval $[\theta(c), \theta(d)]$ splits into a union of intervals of the form $[\pi n, \pi(n+1)]$, the integral 
$$
\int_{\theta(c)}^{\theta(d)}\frac{\cos \theta \sin\theta}{- \frac{1}{E-\lambda\varphi(x(\theta))-1} -\sin^2\theta} \, \rmd\theta
$$
is at most of order $\sqrt{\lambda}\cdot \frac{1}{\sqrt{\lambda}}= 1$. 

At last, it remains to bound the integral $\int_J\frac{\cos \theta \sin\theta}{ -\frac{1}{E-\lambda\varphi(x(\theta))-1}  -\sin^2\theta} \, \rmd\theta$ over an interval $J$ of length smaller than $\pi$. 
Since the numerator is antisymmetric about points of the form $\pi(n+\frac{1}{2})$ and the denominator does not change sign, the integral in question is bounded from above by the absolute value of an  integral of the form 
$$
\int_{J'}\frac{\cos \theta \sin\theta}{ -\frac{1}{E-\lambda\varphi(x(\theta))-1}  -\sin^2\theta} \, \rmd\theta,
$$
where   $J'$ is of the form $[\pi n, \pi(n+1/2)]$ or $[\pi(n+1/2), \pi (n+1)]$ for an integer $n$.
Since $\int_0^{\pi/2}\frac{\cos(x)\sin(x)}{a^2+\sin^2(x)}\rmd x=\frac{1}{2}\ln\left(1+\frac{1}{a^2}\right)$, this implies that $\left|\int_{J'}\frac{\cos \theta \sin\theta}{ -\frac{1}{E-\lambda\varphi(x(\theta))-1}  -\sin^2\theta} \, \rmd\theta\right|\lesssim \log \lambda,$ which completes the proof of the lemma.
\end{proof}

Let us now treat the case when the function $\varphi$ is negative in the interior of an interval, but vanishes at the boundary points. 

\begin{lemma} \label{lem:zeroesonthe boundary}
Assume that $\varphi$ is continuously differentiable on $[c,d]$,  strictly negative on $(c,d)$, equal to zero at the end points $c$ and $d$,  $\varphi'(c)<0$ and $\varphi'(d)>0$, and that $E>0$.
If $L(x)$ is a solution of \eqref{e.pcoord}, then for large enough values of $\lambda$ we have $$|L(d) - L(c)| \lesssim \lambda^{\frac{9}{20}}.
$$
\end{lemma}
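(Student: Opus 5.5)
We split $[c,d]$ into the three regimes announced before Lemma~\ref{lem:B2small}, normalizing $\min\varphi=-1$ and choosing exponents $0<\varepsilon_1<\varepsilon_2<1$ at the end. Since $\varphi'(c)<0$, $\varphi'(d)>0$ and $\varphi<0$ inside, $|\varphi(x)|\asymp \operatorname{dist}(x,\{c,d\})$ on $[c,d]$. Hence the microscopic region $\{|\lambda\varphi|\le \lambda^{\varepsilon_1}\}$ is a union of two intervals of length $\asymp \lambda^{\varepsilon_1-1}$ at the endpoints. The mesoscopic region $\{\lambda^{\varepsilon_1}\le|\lambda\varphi|\le\lambda^{\varepsilon_2}\}$ is two intervals of length $\asymp\lambda^{\varepsilon_2-1}$. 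The macroscopic region $\{|\lambda\varphi|\ge\lambda^{\varepsilon_2}\}$ is the complementary central interval. We bound the change of $L$ on each piece using \eqref{e.pcoord} and add.

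\emph{Microscopic region.} We use the crude bound $|L'|\le |\lambda\varphi-E+1|\lesssim \lambda^{\varepsilon_1}$. Integrating over length $\lambda^{\varepsilon_1-1}$ gives a contribution $\lesssim \lambda^{2\varepsilon_1-1}=o(1)$ when $\varepsilon_1<1/2$.

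\emph{Macroscopic region.} We repeat the averaging argument of Lemma~\ref{lem:B2small}, but with the local scale $\mu(x):=|\lambda\varphi(x)|\ge\lambda^{\varepsilon_2}$ in place of $\lambda$. Write $dL/d\theta$ as in \eqref{e.dLdtheta}. By \eqref{eq:dxdtheta}, over one $\theta$-period $x$ moves by $\lesssim\mu^{-1/2}$, so $\lambda\varphi$ changes by $\lesssim \lambda\mu^{-1/2}$, i.e.\ by a relative amount $\lesssim \lambda\mu^{-3/2}$. Comparing with the frozen-coefficient integral, which vanishes, the per-period error is $\lesssim \lambda\mu^{-3/2}\cdot a^{-2}\cdot\mu^{-2}$ with $a^2\asymp\mu^{-1}$, which is $\lesssim \lambda\mu^{-3/2}$. (This agrees with \eqref{eq:trigIntegralBoundCombined} when $\mu\asymp\lambda$.) The number of periods per unit length is $\asymp\mu^{1/2}$, so the error per unit $x$-length is $\lesssim\lambda/\mu$. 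Integrating $\lambda/|\lambda\varphi(x)|\asymp 1/\operatorname{dist}(x,\{c,d\})$ over the macroscopic region gives $\lesssim\log\lambda$. The incomplete periods at the ends of the region cost $\lesssim\log\lambda$ each, by the $J'$ estimate in Lemma~\ref{lem:B2small}. So this region contributes only $O(\log\lambda)$.

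\emph{Mesoscopic region.} This is the main obstacle. Here $\mu$ ranges over $[\lambda^{\varepsilon_1},\lambda^{\varepsilon_2}]$, and the averaging above still formally gives $\lambda/\mu$ per unit length. Integrating this yields only $\log\lambda$, provided the frozen-coefficient comparison is valid, i.e.\ the relative variation per period satisfies $\lambda\mu^{-3/2}\ll1$. That requires $\mu\gg\lambda^{2/3}$, which fails throughout the mesoscopic range. There the rotation is too slow relative to the change of $\varphi$, and averaging breaks down. My plan is therefore to use a cruder bound on this region. One option is $|L'|\le\mu$, giving a contribution $\lesssim\lambda^{\varepsilon_2}\cdot\lambda^{\varepsilon_2-1}=\lambda^{2\varepsilon_2-1}$. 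Alternatively, I would use a hybrid: split into dyadic shells $\mu\in[2^j,2^{j+1}]$, and on each shell bound $|\Delta L|$ by $\min(\mu\cdot\text{length},\ \text{number of half-periods}\cdot\log\mu)$, using that the signed integral over each half-period is $\lesssim\log\mu$. With shell length $\asymp\mu/\lambda$, the number of half-periods is $\asymp\mu^{3/2}/\lambda$, so each shell costs $\lesssim(\mu^{3/2}/\lambda+1)\log\lambda$. Summing up to $\mu=\lambda^{2/3}$ gives $O(\log^2\lambda)$-type bounds, and the crude estimates above handle the rest.

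I would then optimize the exponents. Taking $\varepsilon_1$ slightly below $1/2$ and $\varepsilon_2$ so that the mesoscopic and transitional costs balance should produce a power below $1/2$; for instance, it should reach $9/20$ with $\varepsilon_2$ near $\tfrac{29}{40}$ and the transition handled with some room. I expect the delicate point to be controlling the comparison error uniformly near $\mu\sim\lambda^{2/3}$, where neither the averaging nor the crude bound is sharp. Any power strictly less than $1/2$ suffices for the later comparison with Lemma~\ref{lem:B1big}, and I would settle for the exponent $9/20$ as stated.
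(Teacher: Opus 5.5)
Your proposal is correct once you fix $\varepsilon_2\in(2/3,\tfrac{29}{40}]$ and apply the crude bound on all of $\{|\lambda\varphi|\le\lambda^{\varepsilon_2}\}$. It handles the bulk of $[c,d]$ differently from the paper.

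\textbf{Where you agree with the paper.} The paper's innermost step is the same as yours: Lemma~\ref{l.initial} on $\{|\lambda\varphi|\le\lambda^{29/40}\}$ is exactly where its $\lambda^{9/20}$ comes from.

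\textbf{Where you differ.} For the rest, the paper applies Lemma~\ref{l.Mepsminusone} on two fixed bands, $\lambda^{29/40}\le|\lambda\varphi|\le\lambda^{33/40}$ and $|\lambda\varphi|\ge\lambda^{33/40}$. That lemma is uniform in the band. It bounds the drift of $\varphi$ over one $\theta$-period using the smallest scale $\lambda^{\varepsilon_1}$, and the kernel $a^{-2}$ using the largest scale $\lambda^{\varepsilon_2}$. Relative to the local cost $\lambda/\mu$ per unit length, it therefore loses at least a factor $\lambda^{\frac32(\varepsilon_2-\varepsilon_1)}$. This is why the paper needs an intermediate band, and why its bulk still costs $\lambda^{1/4}$ and $\lambda^{7/16}$.

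You instead freeze coefficients period by period at the local scale $\mu(x)$. The $n$th period then costs $\lesssim\lambda\,\Delta x_n/\mu_n$, and summing gives $\lesssim\int\rmd x/|\varphi|\lesssim\log\lambda$ on all of $\{|\lambda\varphi|\ge\lambda^{\varepsilon_2}\}$ in one step. The only requirement is that $\mu$ be essentially constant over a period, i.e.\ $\lambda\mu^{-3/2}\le\lambda^{1-3\varepsilon_2/2}\to0$.

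\textbf{What each route buys.} Yours gives a sharper lemma: any exponent $2\varepsilon_2-1>1/3$ is attainable. It can even reach $O(\log^2\lambda)$ if you average down to $\mu\ge C\lambda^{2/3}$, where bounded relative variation per period already suffices. Below that level you use your dyadic half-turn count: each shell has $O(1+\mu^{3/2}/\lambda)$ half-turns, each costing $O(\log\mu)$. The paper's route reuses a single lemma with uniform constants and avoids this period-by-period bookkeeping, at the price of a worse exponent.

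\textbf{Clean-ups.}
\begin{enumerate}
\item The difficulty you anticipate near $\mu\sim\lambda^{2/3}$ does not arise in the crude version, since you never average below $\lambda^{\varepsilon_2}$. The micro/meso split at $\varepsilon_1<1/2$ is then superfluous.
\item Drop the sentence saying averaging fails \emph{throughout} the mesoscopic range. Taken literally it forces $\varepsilon_2\le 2/3$, and then your macroscopic step would be unjustified on $\lambda^{\varepsilon_2}\le\mu\le\lambda^{2/3}$.
\item The per-period error should read $\lambda\mu^{-1/2}\cdot\mu^{-2}\cdot a^{-2}$: the change of $\lambda\varphi$, converted to a change of $1/\mu$, times $a^{-2}\asymp\mu$. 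It should not read $\lambda\mu^{-3/2}\cdot a^{-2}\cdot\mu^{-2}$. Your final bound $\lambda\mu^{-3/2}$ is right.
\end{enumerate}
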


We will need a few statements  to prove Lemma \ref{lem:zeroesonthe boundary}.

\begin{lemma}\label{l.initial}
Suppose that on the interval $[c, d]$ for some $\varepsilon>0$  we have 
$$
-\lambda^{\varepsilon-1}\le \varphi\le 0,
$$
and let $L(x)$ be a solution of \eqref{e.pcoord}. Then for large $\lambda$ we have
$$
|L(d) - L(c)|
\lesssim \lambda^{\varepsilon}(d-c).
$$
\end{lemma}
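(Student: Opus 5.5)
This follows directly from the second equation in \eqref{e.pcoord}, with no phase analysis needed. Since $\rmd L/\rmd x = \cos\theta\sin\theta\,(\lambda\varphi - E + 1)$ and $|\cos\theta\sin\theta|\le \tfrac12$, we have pointwise
\[
\left|\frac{\rmd L}{\rmd x}\right| \le \tfrac12\,|\lambda\varphi(x) - E + 1|.
\]
Integrating over $[c,d]$ then gives $|L(d)-L(c)| \le \tfrac12\int_c^d |\lambda\varphi - E + 1|\,\rmd x$.

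On $[c,d]$ the hypothesis $-\lambda^{\varepsilon-1}\le\varphi\le 0$ gives $|\lambda\varphi|\le \lambda^{\varepsilon}$. Hence $|\lambda\varphi - E + 1| \le \lambda^{\varepsilon} + |E-1|$. Since $E$ is fixed and the constants in $\lesssim$ may depend on $E$, for $\lambda\ge 1$ we get $|E-1| \le |E-1|\,\lambda^{\varepsilon}$. Therefore
\[
|L(d)-L(c)| \le \tfrac12\bigl(1+|E-1|\bigr)\lambda^{\varepsilon}(d-c),
\]
which is the claim.

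The main point to check is bookkeeping rather than analysis. The interval $[c,d]$ may itself depend on $\lambda$, as it will in the microscopic regime. The estimate is uniform in the interval because the constant $\tfrac12(1+|E-1|)$ involves neither $c$ nor $d$. The length $(d-c)$ is kept explicit precisely so that, when this lemma is applied to the $\lambda$-dependent microscopic region near $x_*$ and near $1$ (of length $\asymp\lambda^{\varepsilon-1}$ by $\varphi'(x_*)<0$, $\varphi'(1)>0$), the product $\lambda^{\varepsilon}(d-c)$ stays small.

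No sign information on $\theta$ is needed, nor is Lemma~\ref{l.rangeofrotation}. This crude bound suffices because in this regime the coefficient $\lambda\varphi-E+1$ is only $O(\lambda^{\varepsilon})$, in contrast to the macroscopic regime, which required the cancellation argument of Lemma~\ref{lem:B2small}.
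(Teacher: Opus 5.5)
Your proof is correct and matches the paper's: bound $|\rmd L/\rmd x|$ pointwise by a constant times $|\lambda\varphi-E+1|\lesssim\lambda^{\varepsilon}$ and integrate over $[c,d]$. Your version, with absolute values and the explicit constant $\tfrac12(1+|E-1|)$, is slightly cleaner than the paper's one-sided comparison with $\widetilde L'=C\lambda^{\varepsilon}$, but your closing aside needs a correction: in the paper's application $\varepsilon_1=\tfrac{29}{40}$ and $d_1-c_1\asymp\lambda^{\varepsilon_1-1}$, so the bound is $\lambda^{2\varepsilon_1-1}=\lambda^{9/20}$, which does not stay small but only grows more slowly than the $\sqrt{\lambda}$ from Lemma~\ref{lem:B1big}.
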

\begin{proof}
    Without loss, consider $L(c)=0$. For $\lambda$ is large, one has 
    \[
    |\lambda\varphi-E+1|\lesssim C\lambda^\varepsilon.
    \]
    Therefore, the solution of the equation $L'
    = \cos \theta\sin\theta (\lambda\varphi - E + 1)$  cannot grow faster than the solution of the equation $\widetilde{L}'
    = C\lambda^\varepsilon.$
\end{proof}

\begin{lemma}\label{l.Mepsminusone}
Suppose that on the interval $[c, d]$ we have 
$$
-\lambda^{\varepsilon_2-1} \le \varphi\le -\lambda^{\varepsilon_1-1}
$$
for some $0< \varepsilon_1 < \varepsilon_2 \leq 1$, and let $L(x)$ be a solution of \eqref{e.pcoord} with the initial condition $L(c)=0$. 
Then for large $\lambda$ we have
$$
|L(d)|\lesssim (d-c)\lambda^{1+ \frac{3}{2}\varepsilon_2 - \frac{5}{2} \varepsilon_1}+\log \lambda.
$$ 
\end{lemma}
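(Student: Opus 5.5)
We mimic the proof of Lemma~\ref{lem:B2small}, keeping track of how the constants depend on the size of $|\lambda\varphi|$. The new feature is that this size is now only pinned between $\lambda^{\varepsilon_1}$ and $\lambda^{\varepsilon_2}$, rather than being comparable to $\lambda$.

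First, on $[c,d]$ we have
\[
-\lambda^{\varepsilon_2}\le \lambda\varphi \le -\lambda^{\varepsilon_1},
\]
so for large $\lambda$ the quantity $q(x):=E-\lambda\varphi(x)-1$ satisfies $\lambda^{\varepsilon_1}\lesssim q\lesssim \lambda^{\varepsilon_2}$. By \eqref{e.pcoord}, $\theta'=1+q\sin^2\theta>0$, so $\theta$ is strictly increasing and we may change variables exactly as in \eqref{e.dLdtheta}. This gives
\[
\frac{\rmd L}{\rmd\theta}=\frac{-\cos\theta\sin\theta}{q^{-1}+\sin^2\theta}.
\]
Next we count rotations. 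Integrating $\rmd x/\rmd\theta=1/(1+q\sin^2\theta)$ over a period $[\pi n,\pi(n+1)]$, via the formula $\int_0^\pi(\sin^2\theta+a^2)^{-1}\,\rmd\theta=\pi/(a\sqrt{1+a^2})$, shows that one half-turn costs an $x$-length between $\asymp\lambda^{-\varepsilon_2/2}$ and $\asymp\lambda^{-\varepsilon_1/2}$. Hence the number of full periods is $N\lesssim (d-c)\lambda^{\varepsilon_2/2}+1$, and on each period the $x$-length is $\Delta x_n\lesssim\lambda^{-\varepsilon_1/2}$.

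On each full period we compare $q(x(\theta))$ with the frozen value $q_0=q(x(\theta_0))$, for which the integral of $\rmd L/\rmd\theta$ vanishes by antisymmetry. Since $\varphi$ is $C^1$, $|q-q_0|\lesssim\lambda\,\Delta x_n\lesssim\lambda^{1-\varepsilon_1/2}$. The frozen and actual integrands differ by at most
\[
|q^{-1}-q_0^{-1}|\cdot\frac{|\cos\theta\sin\theta|}{(a^2+\sin^2\theta)^2},
\]
where $a^2\asymp\min q^{-1}\gtrsim\lambda^{-\varepsilon_2}$. On one hand, $|q^{-1}-q_0^{-1}|\lesssim\lambda^{1-\varepsilon_1/2}\,\lambda^{-2\varepsilon_1}$. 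On the other, using $\int_0^{\pi/2}\cos\theta\sin\theta\,(a^2+\sin^2\theta)^{-2}\,\rmd\theta\lesssim a^{-2}\lesssim\lambda^{\varepsilon_2}$, each period contributes at most $\lambda^{1+\varepsilon_2-\frac52\varepsilon_1}$. Multiplying by $N\lesssim(d-c)\lambda^{\varepsilon_2/2}$ yields
\[
(d-c)\,\lambda^{1+\frac32\varepsilon_2-\frac52\varepsilon_1}.
\]
The leftover partial period at the end, of length $<\pi$, is handled exactly as at the end of Lemma~\ref{lem:B2small}: it is bounded by an integral over a quarter turn, which is $\tfrac12\log(1+a^{-2})\lesssim\log\lambda$. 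We may also have to absorb the $+1$ in $N$, i.e.\ a possibly incomplete initial period, into this $\log\lambda$ term. Summing the two contributions gives the stated bound.

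The main obstacle is the per-period perturbation estimate. The Lipschitz variation of $q$ over one period must be measured in $x$-length using the slowest possible rotation, which introduces the $\lambda^{-\varepsilon_1/2}$ factor, while the singular weight $(a^2+\sin^2\theta)^{-2}$ must be controlled using the largest possible $q$, which introduces the $\lambda^{\varepsilon_2}$ factor. Each exponent has to be taken from the appropriate extreme so that they combine into exactly $1+\tfrac32\varepsilon_2-\tfrac52\varepsilon_1$. If the crude bound $|q^{-1}-q_0^{-1}|\le|q-q_0|/(q q_0)$ loses too much, I would first try using $q\ge\lambda^{\varepsilon_1}$ in both denominator factors, which is what produces the $\lambda^{-2\varepsilon_1}$.
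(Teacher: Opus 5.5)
Your proposal is correct and follows the paper's proof essentially step for step. Both arguments freeze $q$ on each $\pi$-period of $\theta$ and bound the $x$-length of a period by $\lesssim\lambda^{-\varepsilon_1/2}$, which gives $|q^{-1}-q_0^{-1}|\lesssim\lambda^{1-\frac52\varepsilon_1}$; they control the weight by $a^{-2}\lesssim\lambda^{\varepsilon_2}$, count $\lesssim(d-c)\lambda^{\varepsilon_2/2}$ periods, and collect a $\log\lambda$ remainder from incomplete periods. Your worry about the $+1$ is harmless: each complete period consumes $x$-length $\gtrsim\lambda^{-\varepsilon_2/2}$, so the number of complete periods is $\lesssim(d-c)\lambda^{\varepsilon_2/2}$ outright, and only the incomplete end pieces remain, each of which is $\lesssim\log\lambda$.
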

\begin{proof}
 Let $J_{\pi}$ denote an interval of length $\pi$. We have \begin{equation} \label{eq:repeat}
\frac{\rmd x}{\rmd \theta}
= \frac{1}{1 - (\lambda\varphi - E + 1)\sin^2\theta}
\leq
\frac{1}{C\lambda^{\varepsilon_1} \sin^2\theta+1},
\end{equation}
and since $\int_{J_{\pi}} \frac{\rmd \theta}{\sin^2\theta+a^2} =\frac{\pi}{a\sqrt{1+a^2}}$, on the interval $J_{\pi}$ the value of $x=x(\theta)$ changes by at most 
\begin{equation}
\int_{J_\pi} \frac{\rmd x}{\rmd \theta}\, \rmd \theta
    \lesssim  \lambda^{-\frac{\varepsilon_1}{2}}.
\end{equation}
Together with  Lipschitz continuity of $\varphi$, this implies that 
\[
\max_{x \in J_\pi} \varphi(x(\theta)) - \min_{x \in J_\pi} \varphi(x(\theta))
\lesssim \lambda^{-\frac{\varepsilon_1}{2}}.
\]
Hence, for any $\theta, {\theta_0}\in J_\pi$ we have
$$
\left|\frac{1}{E-\lambda\varphi(x(\theta))-1}-\frac{1}{E-\lambda\varphi(x({\theta_0}))-1}\right|
\lesssim\frac{\lambda\cdot \lambda^{-\varepsilon_1/2}}{\lambda^{2\varepsilon_1}}
= \lambda^{1-\frac{5\varepsilon_1}{2}}.
$$
Therefore, repeating the argument from the proof of Lemma~\ref{lem:B2small}, we get, for any $\theta_0\in J_\pi$, that
\begin{align*}&
\left|\int_{J_\pi}\frac{\cos \theta \sin\theta}{ -\frac{1}{E-\lambda\varphi(x(\theta))-1}  -\sin^2\theta} \, \rmd\theta\right|\\
&\qquad = \left|\int_{J_\pi}\frac{\cos \theta \sin\theta}{ -\frac{1}{E-\lambda\varphi(x(\theta))-1}  -\sin^2\theta} \, \rmd\theta-\int_{J_\pi}\frac{\cos \theta \sin\theta}{ -\frac{1}{E-\lambda\varphi(x(\theta_0))-1}  -\sin^2\theta} \, \rmd\theta\right| \\
& \qquad \lesssim \lambda^{1-\frac{5\varepsilon_1}{2}}\int_{J_\pi}\frac{|\cos \theta \sin\theta|}{(a^2 +\sin^2\theta)^2} \, \rmd\theta,
\end{align*}
where $a^2={\displaystyle{\min_{\theta\in J_\pi}}} \frac{1}{E-\lambda\varphi(x(\theta))-1}\ge \frac{1}{E+\lambda^{\varepsilon_2}-1}\gtrsim  \frac{1}{ \lambda^{\varepsilon_2}}$. Since $\int_0^{\pi/2}\frac{\cos x\sin x}{(a^2+\sin^2 x)^2} \, \rmd x =\frac{1}{2a^2(a^2+1)},$ we have 
$$
\left|\int_{J_\pi}\frac{\cos \theta \sin\theta}{ -\frac{1}{E-\lambda\varphi(x(\theta))-1}  -\sin^2\theta} \, \rmd\theta\right|\lesssim \lambda^{1-\frac{5\varepsilon_1}{2}} \cdot \lambda^{\varepsilon_2} =\lambda^{1+\varepsilon_2 -\frac{5\varepsilon_1}{2}}.
$$
Now, we claim that 
$$
\theta(d)-\theta(c)\lesssim \max(1, (d-c)\lambda^{\varepsilon_2/2}).
$$
Indeed, for large $\lambda$ we have
$$
\frac{\rmd \theta}{\rmd x} = 1-(\lambda\varphi-E+1)\sin^2\theta\le 1+{2\lambda^{\varepsilon_2}}\sin^2\theta, 
$$
hence
\begin{align*}
d-c
=\int_{c}^{d}\rmd x
& =\int_{\theta(c)}^{\theta(d)}\frac{\rmd\theta}{1-(\lambda\varphi(x(\theta))-E+1)\sin^2\theta}
\\
& \geq \int_{\theta(c)}^{\theta(d)}\frac{\rmd\theta}{1+2{\lambda^{\varepsilon_2}}\sin^2\theta} \\
& =
\frac{1}{2{\lambda^{\varepsilon_2}}}\int_{\theta(c)}^{\theta(d)}\frac{\rmd\theta}{\frac{1}{2{\lambda^{\varepsilon_2}}}+\sin^2\theta}.
\end{align*}
If $\theta(d)-\theta(c)> \pi$, then, taking into account that $\int_0^\pi \frac{\rmd \theta}{\sin^2\theta+a^2}
=\frac{\pi}{a\sqrt{1+a^2}}$, we have
\begin{align*}
   d-c
   \ge  \frac{1}{2{\lambda^{\varepsilon_2}}}\int_{\theta(c)}^{\theta(d)} \frac{\rmd\theta}{\frac{1}{2{\lambda^{\varepsilon_2}}} + \sin^2\theta}
   & \geq \frac{1}{2{\lambda^{\varepsilon_2}}}\left(\frac{\theta(d)-\theta(c)}{\pi}-1\right)\int_0^\pi\frac{\rmd\theta}{\frac{1}{2{\lambda^{\varepsilon_2}}}+\sin^2\theta}\\
   & \gtrsim \lambda^{-\varepsilon_2/2} \left(\frac{\theta(d)-\theta(c)}{\pi} -1\right),
\end{align*}
so $\theta(d)-\theta(c)\lesssim (d-c)\lambda^{\varepsilon_2/2}+1$.
Incorporating also the case in which $\theta(d)-\theta(c)\le \pi$, we get
$$
\theta(d)-\theta(c)
\lesssim \max(1, (d-c)\lambda^{\varepsilon_2/2}).
$$
Therefore, if the interval $[\theta(c), \theta(d)]$ splits into intervals of length $\pi$ exactly, we have
$$
|L(d)|\lesssim (d-c)\lambda^{\varepsilon_2/2} \lambda^{1+\varepsilon_2 - \frac{5\varepsilon_1}{2}}
=(d - c)\lambda^{1 + \frac{3}{2}\varepsilon_2 - \frac{5}{2} \varepsilon_1}. 
$$
Finally, if the interval $[\theta(c), \theta(d)]$   does not split into intervals of length $\pi$ exactly, we can get a remainder of order $\log \lambda$ in this estimate, as was shown in the proof of Lemma~\ref{lem:B2small}. This completes the proof of the lemma.
\end{proof}

Now are ready to prove Lemma \ref{lem:zeroesonthe boundary}. 

\begin{proof}[Proof of Lemma \ref{lem:zeroesonthe boundary}]

Fix $\varepsilon_1 = \frac{29}{40}$ and  $\varepsilon_2 = \frac{33}{40}$, let $\varphi$ denote a function satisfying the assumptions of the lemma; we may assume without loss that $\min \varphi = -1$. 
Since $\varphi'(c) < 0$ and $\varphi \in C^1$, there exists $\delta>0$ such that $\varphi'<0$ and hence $\varphi$ is decreasing on $[c,c+\delta]$.
It follows that for any sufficiently large $\lambda>0$, the interval $\left[c, \frac{c+d}{2}\right]$ can be split into the union 
$$
[c_1, d_1]\cup [c_2, d_2]\cup [c_3, d_3]
$$
of intervals with disjoint interior, so that $c=c_1$, $d_1=c_2$, $d_2=c_3,$ $d_3=\frac{c+d}{2},$ and the following hold:

\begin{enumerate}[label={\rm(\arabic*)}]

\item $-\lambda^{\varepsilon_1-1}\le \varphi\le 0$  if $x\in [c_1, d_1]$; 

\item $-\lambda^{\varepsilon_2-1} \le \varphi\le -\lambda^{\varepsilon_1-1}$ if $x\in [c_2, d_2]$; 

\item $-1 \le \varphi\le -\lambda^{\varepsilon_2-1}$ if $x\in [c_3, d_3]$.

\end{enumerate}

For large $\lambda>0$ we have 
$$
d_1-c_1\asymp \lambda^{\varepsilon_1-1},
\qquad d_2-c_2\asymp \lambda^{\varepsilon_2-1}, \qquad
d_3-c_3\asymp 1.
$$

Applying Lemma~\ref{l.initial} on $[c_1,d_1]$ and Lemma~\ref{l.Mepsminusone} twice (once as written on $[c_2,d_2]$ and a second time with $(\varepsilon_1,\varepsilon_2)$ replaced by $(\varepsilon_2,1)$ on $[c_3,d_3]$), we obtain 
$$
L(d_1)-L(c_1)\lesssim \lambda^{2\varepsilon_1-1},
\qquad L(d_2)-L(c_2)\lesssim \lambda^{\frac{5\varepsilon_2}{2}-\frac{5\varepsilon_1}{2}}, \qquad 
L(d_3)-L(c_3)\lesssim \lambda^\frac{5-5\varepsilon_2}{2}.
$$
In particular, for  our choice of the values of $\varepsilon_1, \varepsilon_2$ we have
$$
2\varepsilon_1-1= \frac{9}{20},
\qquad\frac{5\varepsilon_2}{2}-\frac{5\varepsilon_1}{2}=\frac{1}{4}<\frac{9}{20},
\qquad\frac{5-5\varepsilon_2}{2}=\frac{35}{80}<\frac{9}{20}. 
$$
Therefore, $L(d_3)-L(c_1)\lesssim \lambda^\frac{9}{20}$. 
Application of an analogous argument to the interval $\left[\frac{c+d}{2}, d\right]$ completes the proof.
\end{proof} 

\begin{proof}[Proof of Theorem~\ref{t:counterexampleContinuous}]
Fix $E$ of the form $E = 4\pi^2 n^2$, denote
$\mathbf{A} = \mathbf{M}_0$, $\mathbf{B} = \monodromy_1$, and define
\begin{equation}
    \mathbf{B}_1(E,\lambda)
    \eqdef \transferMatrix_{[0,x_*]}(\lambda f_1, E),
    \quad \mathbf{B}_2(E,\lambda)
    \eqdef \transferMatrix_{[x_*,1]}(\lambda f_1,E).
\end{equation}
Since $\mathbf{A} = \idty$, it suffices to show that we can choose $\lambda_k \to \infty$ such that $\mathbf{B} = \mathbf{B}(E,\lambda_k) = \mathbf{B}_2(E,\lambda_k)\mathbf{B}_1(E,\lambda_k)$ satisfies
$\mathbf{B}^2 = - \idty$.
Indeed, this would imply that $\mathbf{B}$ is elliptic and therefore  $E$ belongs to the spectrum.
Furthermore, since $\mathbf{A} = \idty$, it follows that $\mathbf{A}$ and $\mathbf{B}$ commute, and therefore $I(E)=0$, so that the local Hausdorff dimension of the spectrum at $E$ is one.

To that end, we note that $\log\|\mathbf{B}_1\| \gtrsim {\lambda}^{1/2}$ by Lemma~\ref{lem:B1big}.
By the proof of  Lemma~\ref{lem:B1big} and Proposition~\ref{prop:funfact}, the expanding direction of $\mathbf{B}_1$, $U(\mathbf{B}_1)$, satisfies 
\begin{equation}
\angle (\mathbf{B}_1U(\mathbf{B}_1), e_1)
\lesssim \exp(-\lambda^{1/2} ),
\end{equation}
where $e_1 = (1, 0)^\top$.  By Lemma~\ref{lem:zeroesonthe boundary}, $\log \|\mathbf{B}_2\| \lesssim \lambda^{9/20}$, so  $\log \|\mathbf{B}\| \gtrsim {\lambda}^{1/2} - \lambda^{9/20} \gtrsim \lambda^{1/2}$ with $\mathbf{B}_1 U(\mathbf{B}) \approx (1,0)^\top$ as well.
Then, by taking advantage of Lemma~\ref{l.rangeofrotation}, we can start at any large $\lambda$ value that we like and increase until we arrive at a $\lambda$ for which
\begin{equation}
    \mathbf{B}U(\mathbf{B}) = \mathbf{B}_2\mathbf{B}_1U(\mathbf{B}) = S(\mathbf{B}).
\end{equation}
Since $\mathbf{B}U(\mathbf{B}) = S(\mathbf{B}^{-1})$ is orthogonal to $U(\mathbf{B}^{-1}) = \mathbf{B}S(\mathbf{B})$, it follows that $\mathbf{B}$ exchanges the subspaces $\{U(\mathbf{B}), S(\mathbf{B})\}$, so $\tr\mathbf{B}=0$, which implies $\mathbf{B}^2=-\idty$ by the Cayley--Hamilton theorem, concluding the argument.
\end{proof}

\section{A Nonlinear Eigenvalue Formulation} \label{sec:nlevp}

\subsection{A Very Brief Review of Floquet Theory} \label{sec:floquet}

Suppose $V: \R \to \R$ is $p$-periodic with $p>0$ and
\begin{equation}
    \int_0^p \! |V(x)|^2 \, \rmd x < \infty.
\end{equation}
Let us recall a few notions from Floquet theory. For proofs, see \cite[Chapter~11]{LukicBook}; for additional background and context, see \cite{Kuchment2016BAMS}.
To study the spectral properties of $L_V = -\partial_x^2 + V$, consider the following family of boundary value problems, indexed by a parameter $\th \in [0,\pi]$:
\begin{equation} \label{eq:BVPtheta}
    -y''(x) + V(x)y(x) = Ey(x), \quad y(p)=e^{i\th}y(0), \ y'(p) = e^{i\th}y'(0).
\end{equation}
For each $E$ and $\th$, the solution space of \eqref{eq:BVPtheta} can be $0$-, $1$-, or $2$-dimensional.
For a given $V$, $p$, and $\th \in [0,\pi]$, write $E_1(\th;V,p) \leq E_2(\th;V,p) \leq \cdots$ for the spectrum of \eqref{eq:BVPtheta}, that is, the list of $E \in \R$ (with multiplicity) such that \eqref{eq:BVPtheta} has a nontrivial solution.

We have the following facts:
\begin{itemize}
    \item Each $E_j(\,\cdot\,;V,p)$ is monotonic on $[0,\pi]$. In fact, $E_j$ is \emph{increasing} if $j$ is odd and \emph{decreasing} if $j$ is even.
    \item The spectrum of $L_V$ is given by
    \begin{equation}
        \spectrum(L_V) = \{ E_j(\th;V,p) : \th \in [0,\pi], \ j \geq 1 \}.
    \end{equation}
\end{itemize} 
Combining these two points,
\begin{equation}
    \spectrum(L_V) = \bigcup_{j=1}^\infty \Big( [E_{2j-1}(0),E_{2j-1}(\pi)] \cup [E_{2j}(\pi) , E_{2j}(0)]\Big) .
\end{equation}
In particular, to identify the spectrum of $L_V$, it suffices to identify the points $E_j(\th)$ with $\th \in \{0,\pi\}$ and furthermore, one has
\begin{equation}
    \spectrum(L_V)
    = \{ E \in \R : \tr \transferMatrix_{[0,p]}(V,E) \in [-2,2]\}.
\end{equation}

\subsection{A Nonlinear Eigenvalue Problem for Periodic Approximations} 
We now return to the Fibonacci setting.
When $f_0$ and $f_1$ permit an exact solution formula for $-y''(x) + \lambda f_j(x)y(x) = E y(x)$, the Floquet characterization of the spectrum of periodic approximations to the continuum Fibonacci operator can be characterized by a parameterized nonlinear eigenvalue problem.
This approach resembles the framework proposed by Mennicken and M\"oller, e.g., with application to serially connected beams~\cite[Sect.~10.4]{MM03} and networks of strings~\cite{Bak23}, as well as the Wittrick--Williams method of dynamic stiffness matrices~\cite{WW71}.

We first consider the constant potentials $f_0 = 0\cdot \chi_{[0,1)}$ and $f_1 = \lambda \cdot \chi_{[0,1)}$ in the case of $k=2$, i.e., period $p=2$.  Let $y_1, y_2:[0,1]\to\C$ denote the solutions of~(\ref{eq:TimeIndepSchrod}) on the subdomains $[0,1]$ and $[1,2]$.\footnote{In the second case, we translate the argument so that $y_2$ is a function on $[0,1]$, not $[1,2]$.}
For the given potential, equation (\ref{eq:TimeIndepSchrod}) requires
\begin{equation} \label{eq:nlevp_ode}
-y_1''(x) + \lambda y_1(x) = E y_1(x), \qquad
   -y_2''(x) = Ey_2(x).
   \end{equation}
These constant-coefficient equations have the general solutions (for $E \ne \lambda$)
\begin{eqnarray*}
y_1(x) &=& \Acoef_1 \sin(\sqrt{E-\lambda}\, x) + \Bcoef_1 \cos(\sqrt{E-\lambda}\, x) \\[3pt]
y_2(x) &=& \Acoef_2 \sin(\sqrt{E}\,x) + \Bcoef_2 \cos(\sqrt{E}\,x).
\end{eqnarray*}
At the subdomain interface, continuity and smoothness ($y_1(1) = y_2(0)$, $y_1'(1) = y_2'(0)$)  require
\begin{eqnarray*}
\Acoef_1 \sin(\sqrt{E-\lambda}) + \Bcoef_1 \cos(\sqrt{E-\lambda}) &=& \Bcoef_2 \\[3pt]
\Acoef_1\sqrt{E-\lambda} \cos(\sqrt{E-\lambda}) - \Bcoef_1 \sqrt{E-\lambda} \sin(\sqrt{E-\lambda}) &=& \Acoef_2 \sqrt{E}.
\end{eqnarray*}
At the ends of the domain, the Floquet conditions~(\ref{eq:BVPtheta}) require $y_2(1) = e^{i\th} y_1(0)$ and $y_2'(1) = e^{i\th}y_1'(0)$, giving
\begin{eqnarray*}
\Acoef_2 \sin(\sqrt{E}) + \Bcoef_2 \cos(\sqrt{E}) &=& \Bcoef_1 e^{i\th}\\[3pt]
 \Acoef_2 \sqrt{E} \cos(\sqrt{E}) - \Bcoef_2 \sqrt{E} \sin(\sqrt{E}) &=& \Acoef_1 e^{i\th} \sqrt{E-\lambda}.
\end{eqnarray*}
Arrange these last four equations into the standard form 
{\footnotesize \[
\left[\!\!\begin{array}{cccc}
\sin(\sqrt{E-\lambda}) & \cos(\sqrt{E-\lambda}) & 0 & -1 \\[3pt]
\sqrt{E-\lambda}\, \cos(\sqrt{E-\lambda}) & - \sqrt{E-\lambda}\, \sin(\sqrt{E-\lambda}) & - \sqrt{E} & 0 \\[3pt]
0 & -e^{i\th} & \sin(\sqrt{E}) & \cos(\sqrt{E}) \\[3pt]
-e^{i\th} \sqrt{E-\lambda} & 0 & \sqrt{E} \cos(\sqrt{E}) & -\sqrt{E} \sin(\sqrt{E})
\end{array}\!\!\right]\!\!
\left[\!\!\begin{array}{c} \Acoef_1 \\[4pt] \Bcoef_1 \\[4pt] \Acoef_2 \\[4pt] \Bcoef_2 \end{array}\!\!\right]
= \left[\!\begin{array}{c} 0\\[4pt] 0\\[4pt] 0\\[4pt] 0 \end{array}\!\right]\!.
\]}
We seek values of $E$ for which the matrix on the left, $\mathbf{T}_{\lambda,\th}(E)$, 
is singular
(and the corresponding solution of the differential equation is nontrivial).
There will generally be infinitely many such real values of $E$.
A wide range of algorithms exist for finding these eigenvalues; see, e.g., G\"uttel and Tisseur~\cite{GT17}.  (Indeed, this family of examples could provide useful test problems for these algorithms.)

This period-2 case can be readily generalized to period~$p = F_k$, 
where $F_k$ is the $k$th Fibonacci number ($F_0=F_1=1$, $F_{k+1} = F_k + F_{k-1}$).
Let $\{\omega_n\}_{n\in\Z}$ denote the sequence of period $F_k$ 
determined from~(\ref{eq:fib}) but with $\alpha$ replaced by $\alpha_k:= F_k/F_{k+1}$  (and $\theta =0$).
Introduce
\[ \freq_n(E) = \left\{\begin{array}{cl}
                            \sqrt{E},         & \omega_n = 0; \\[3pt]
                            \sqrt{E-\lambda}, & \omega_n = 1.
                  \end{array}\right.
\]
Let $e_j\in\C^{2F_k}$ denote the $j$th column of the identity matrix.
Then we can express the nonlinear eigenvalue function
as $\mathbf{T}_{\lambda,\th}(E): \C^{2F_k} \to \C^{2F_k}$ as
\begin{eqnarray*}
\mathbf{T}_{\lambda,\th}(E) &=&
   \sum_{n=1}^{F_k}  \Big(\sin(\freq_n(E)) e_{2n-1}^{} e_{2n-1}^\top
                     + \cos(\freq_n(E)) e_{2n-1}^{} e_{2n}^\top \Big) \\[5pt]
   &&{}+ \sum_{n=1}^{F_k} \Big(\freq_n(E) \cos(\freq_n(E)) e_{2n}^{} e_{2n-1}^\top
                         -\freq_n(E) \sin(\freq_n(E)) e_{2n}^{} e_{2n}^\top \Big)  \\[5pt]
   &&{}- \sum_{n=1}^{F_k-1} e_{2n-1}^{} e_{2n+2}^\top
       - \sum_{n=1}^{F_k-1} \freq_{n+1}(E) e_{2n}^{} e_{2n+1}^\top  \\[5pt]
   &&{}- e^{i\th}  e_{2F_k-1}e_2^\top - \freq_1(E) e^{i \th} e_{2F_k} e_1^\top.
\end{eqnarray*}

\begin{figure}
\includegraphics[width=5.5in]{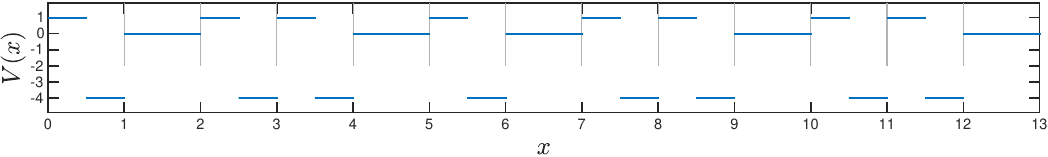}\quad

\medskip
\includegraphics[width=2.85in]{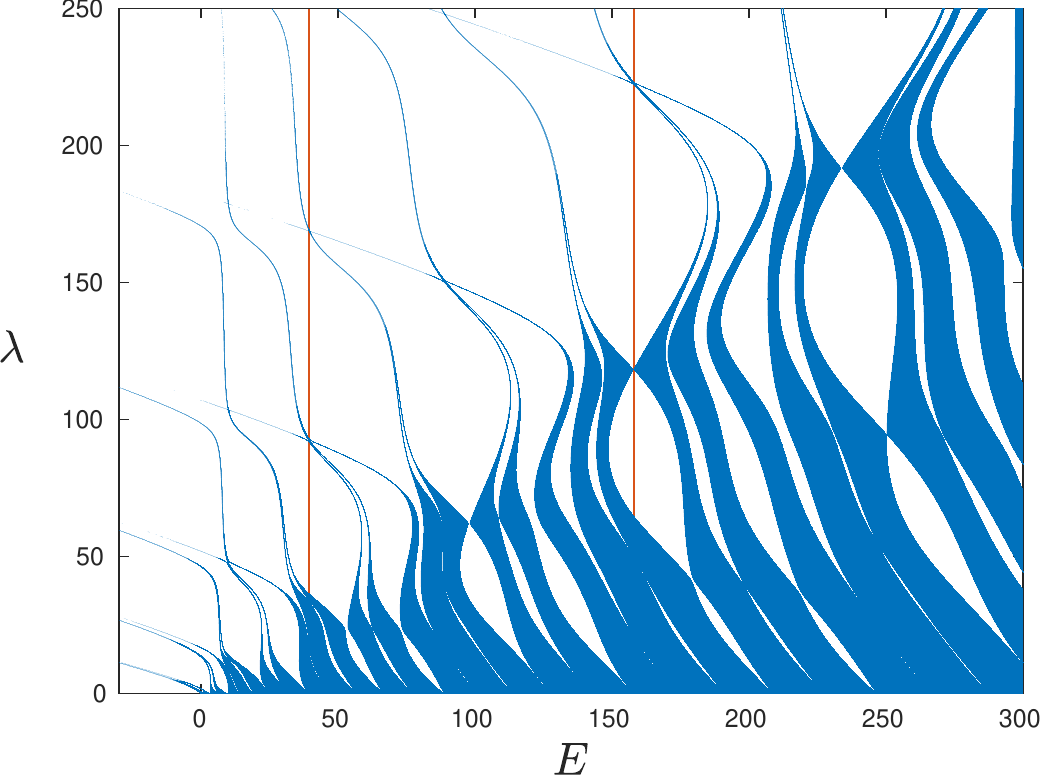}\quad
\includegraphics[width=2.85in]{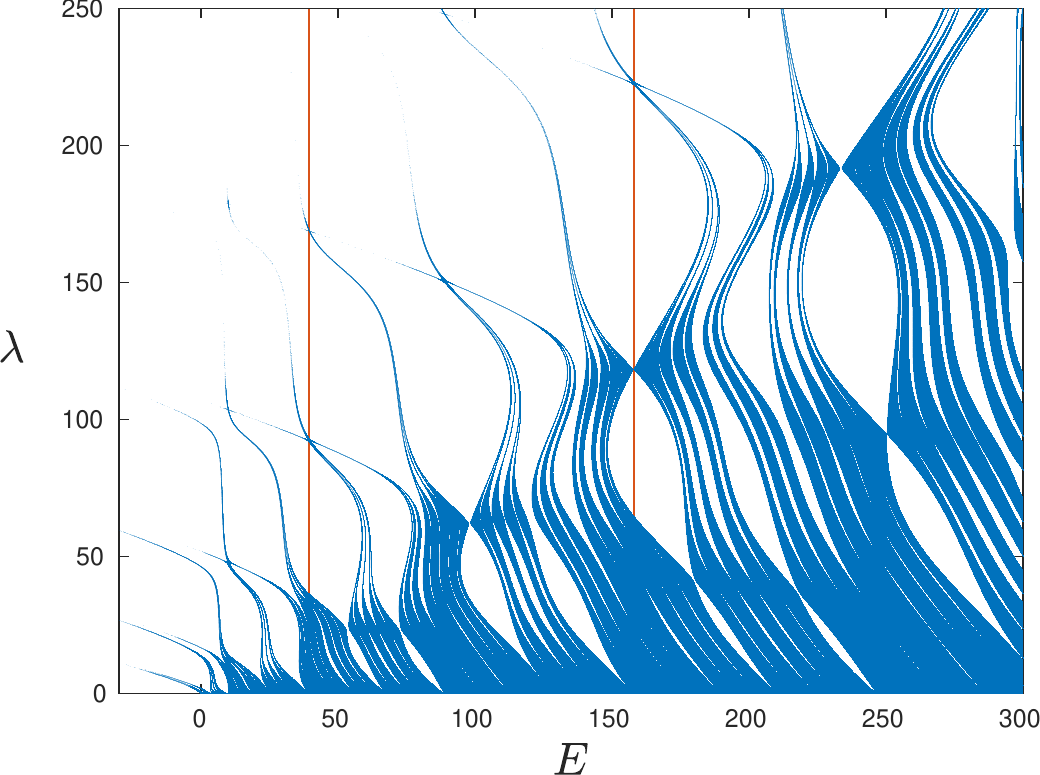}
\caption{\label{fig:prop32a}For the piecewise constant potential with $c=4$ (shown on the top), the bottom plots show a portion of the spectrum for periodic approximations ($p=F_4=5$, left; $p=F_6=13$, right).  The vertical red lines indicate the values of $E=4\pi^2$ and $E=16\pi^2$.}
\end{figure}

\begin{figure}
\includegraphics[width=2.55in]{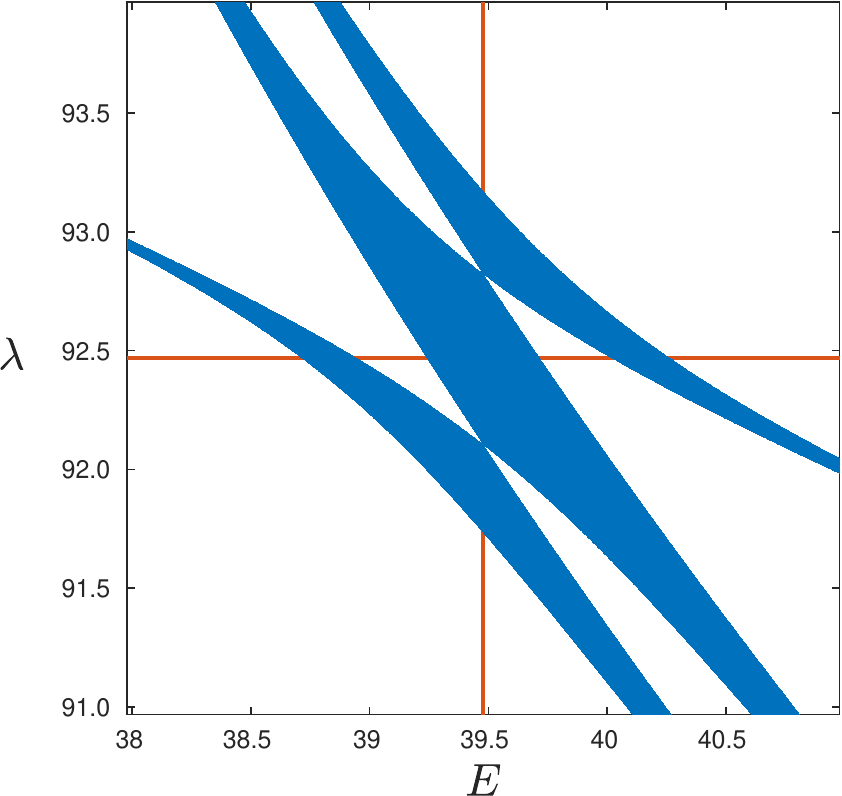}\quad
\includegraphics[width=2.55in]{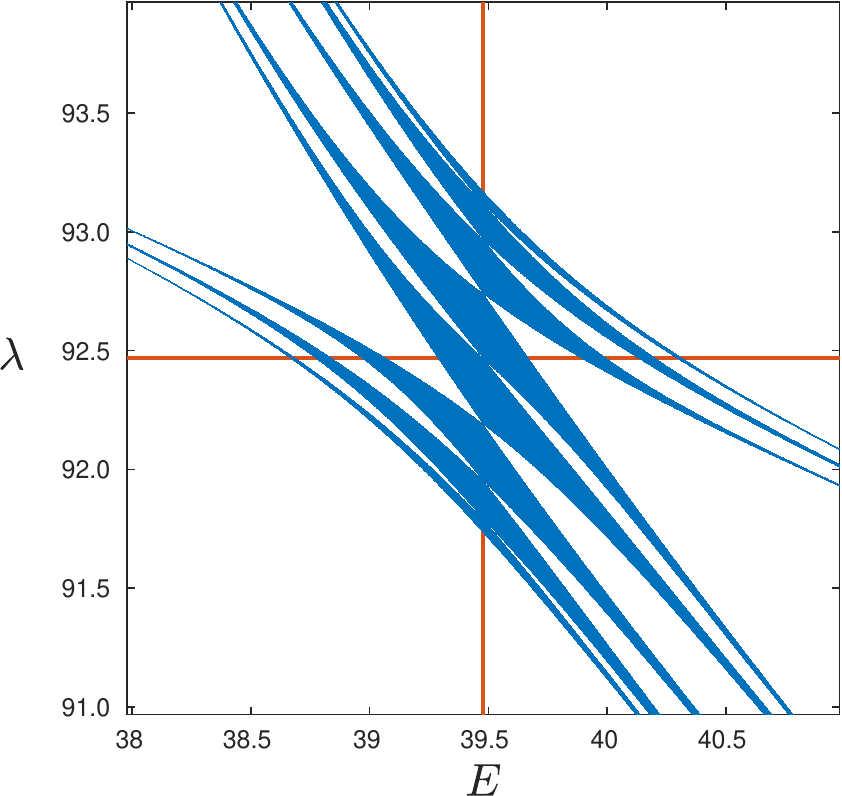}
\caption{\label{fig:prop32b} For the potential in Figure~\ref{fig:prop32a}, now focusing on the spectrum near $E=4\pi^2$ (vertical red line) and the first $\lambda>E$ for which ${\rm Tr}\ \mathbf{B}(E, \lambda)=0$ (horizontal red line), for periodic approximations ($p=F_4=5$, left; $p=F_6=13$, right).    We observe $F_3=3$ and $F_5=8$ bands in these plots.}
\end{figure}

Motivated by Theorem~\ref{t:counterexampleContinuous}, we consider $f_0 = 0 \cdot \chi_{[0,1)}$ and $f_1$  a simple discrete version of a split function:
\begin{equation}
    f_1(x) =  \begin{cases}
        1 & 0 \le x < 1/2 \\
        -c & 1/2 \le x < 1,
    \end{cases}
\end{equation}
where $c>0$ is a suitable constant.
Of course, $f_1$ is not $C^1$ and hence not a genuine split function, but we remark in passing that the arguments proving Theorem~\ref{t:counterexampleContinuous} can be adjusted to apply to this choice of $f_1$, which thus gives us a useful benchmark for the computations.
Since this potential is constant on each subdomain $[j/2, (j+1)/2]$ for $j\in\Z$, we can apply the approach just described to characterize the spectra of periodic approximations.  In this case, it is convenient (if slightly profligate) to solve the differential equation on half-unit subdomains, so that the order $p=F_k$ periodic approximation leads to a nonlinear eigenvalue problem of order $4F_k$ (rather than $2F_k$, as previously).  This formulation can be used to study the fine spectral structure for suitable potentials.  For example, Figure~\ref{fig:prop32a} shows a portion of the spectrum for periodic approximations of length $p=F_5= 8$ and $p=F_6=13$ for $c=4$. There exists an unbounded sequence $\{\lambda_k\}$ such that $E = 4\pi^2$ is in the spectrum $\Sigma_k$.  To investigate this phenomenon, Figure~\ref{fig:prop32b} focuses around $E=4\pi^2$ and the first value of $\lambda>E$ such that $\mathbf{B}(E, \lambda)=0$; the red lines show $\lambda=92.46773$ and $E = 4\pi^2$.  (These later plots resemble the spectra of periodic approximations to \emph{discrete} Fibonacci operators~\cite[Fig.~4]{DEG15}.)  In Figure~\ref{fig:thm3_7} we replace the piecewise constant $f_1$ by the \textit{bona fide} split function \[f_1(x) = 50 x (x-1/3) (x-1).\]  We observe similar spectral features, as expected from Theorem~\ref{t:counterexampleContinuous}.

For numerical investigations,
we complement this nonlinear eigenvalue formulation 
with a piecewise Chebyshev pseudospectral discretization~\cite{Tre00} 
of the differential equations.  This approach, which we used to create all the plots in this paper, has the advantage of allowing potentials that are not piecewise constant, as in the left plot in Figure~\ref{fig:f1pos} and in Figure~\ref{fig:thm3_7}.  The nonlinear eigenvalue formulation provides a way to benchmark the accuracy of the discretization method using examples with piecewise constant potentials.

\begin{figure}
\includegraphics[width=5.5in]{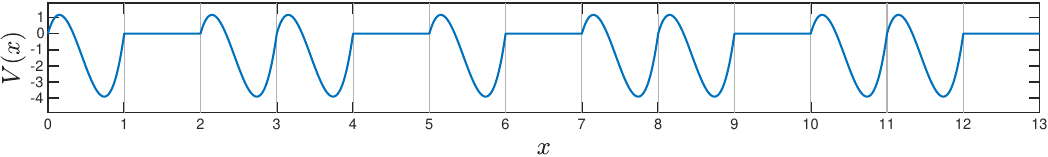}\quad

\medskip
\includegraphics[height=2.25in]{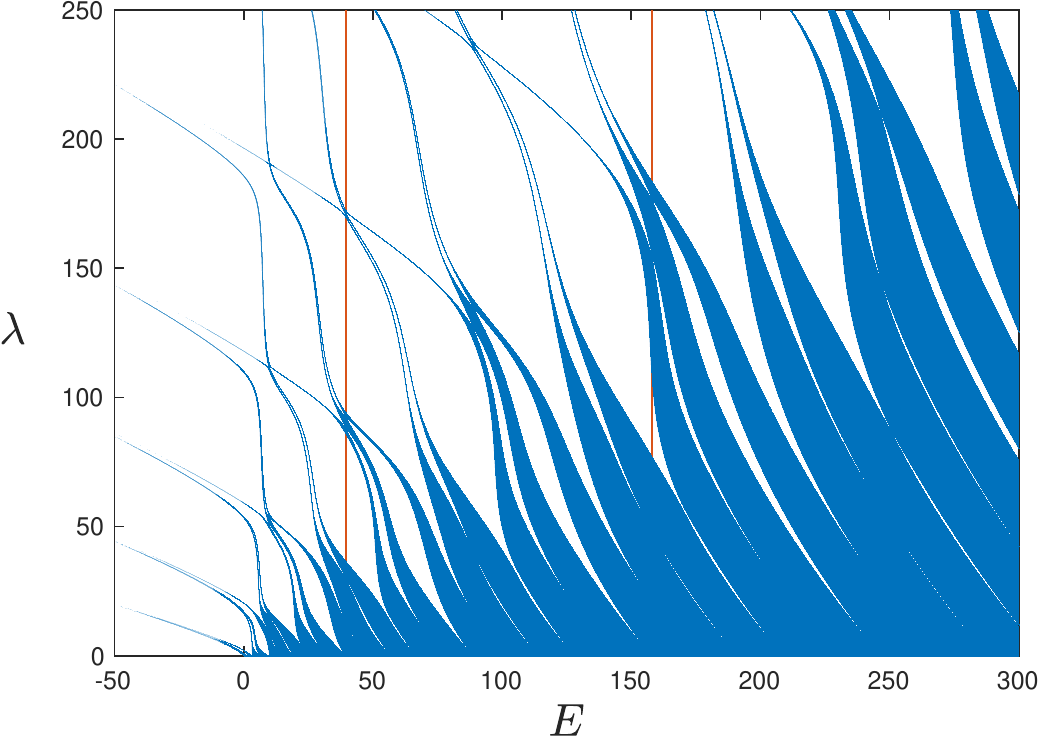}\quad
\includegraphics[height=2.25in]{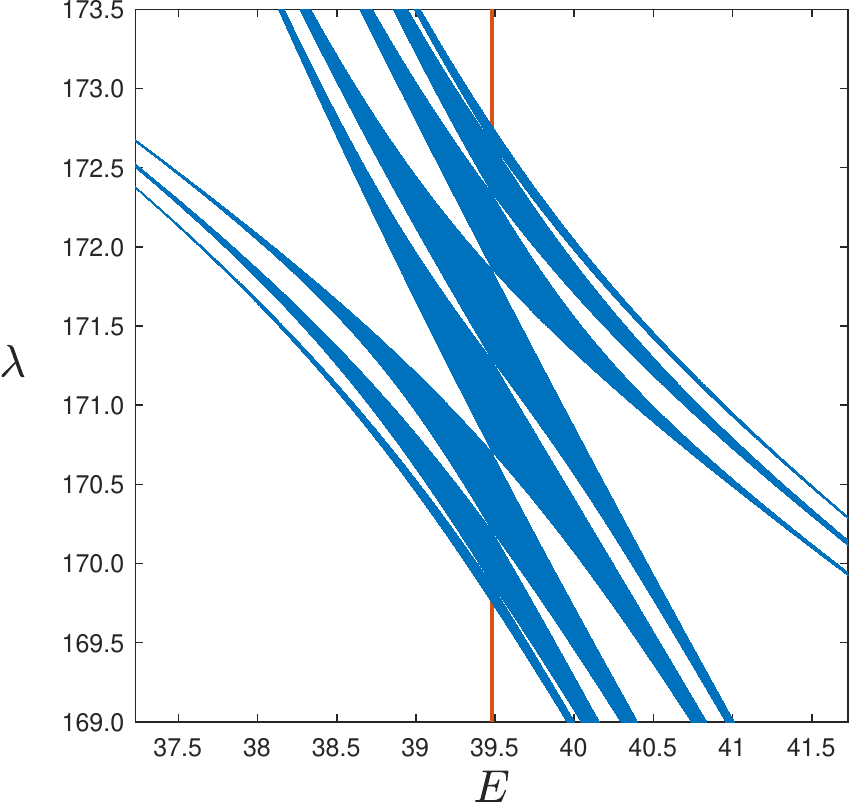}

\caption{\label{fig:thm3_7} For a smooth potential involving a split function $f_1$ (shown on the top), the bottom plots show a portion of the spectrum for the periodic approximation with $p=F_4=5$ (left), and a zoom for $p=F_6=13$ (right) showing a confluence of $F_5=8$ spectral bands around $E=4\pi^2$ (vertical red line).}
\end{figure}

\begin{appendix}
    \section{Singular Vectors of Hyperbolic Matrices} \label{sec:funfact}

We need the following elementary statement about expanding directions and expanding eigenspaces for hyperbolic $\SL(2,\R)$ matrices.
Here we recall that any $\mathbf{A} \in \SL(2,\R)$ with $\|\mathbf{A}\|>1$ has expanding and contracting directions $U(\mathbf{A}), S(\mathbf{A}) \in \R\mathbb{P}^1$ such that 
\begin{equation}
    \left|\mathbf{A}   u\right| = \|\mathbf{A}\|, \quad \left|\mathbf{A}   s \right| = \|\mathbf{A}\|^{-1}
\end{equation}
for any unit vectors $  u \in U(\mathbf{A})$, $  s \in S(\mathbf{A})$.

\begin{prop} \label{prop:funfact}
Suppose $\mathbf{A} \in \SL(2,\R)$ is hyperbolic with eigenvalues $\lambda^{\pm 1}$, where $|\lambda|>1$. 
Let $U(\mathbf{A})$ denote the direction that is most expanded by $\mathbf{A}$ and $V(\mathbf{A})$ the eigenspace with eigenvalue $\lambda$. Then,
\begin{equation}
    \angle(V(\mathbf{A}), \mathbf{A}U(\mathbf{A})) \leq \frac{\pi}{2  |\lambda| \|\mathbf{A}\|}.
\end{equation}
\end{prop}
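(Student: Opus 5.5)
The plan is to work in the singular-value frame of $\mathbf{A}$ and read off the angle from the component of the eigenvector transverse to $\mathbf{A}U(\mathbf{A})$.

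\emph{Singular-value frame.} Write $\sigma = \|\mathbf{A}\| > 1$; this holds because $\mathbf{A}$ is hyperbolic. Choose unit vectors $u \in U(\mathbf{A})$ and $s \in S(\mathbf{A})$. They are orthogonal, being right singular vectors of $\mathbf{A}$. Put $u' = \mathbf{A}u/\sigma$ and $s' = \sigma\,\mathbf{A}s$. These are orthonormal (the left singular vectors), with $\mathbf{A}u = \sigma u'$ and $\mathbf{A}s = \sigma^{-1}s'$. The line $\mathbf{A}U(\mathbf{A})$ is spanned by $u'$, so the task is to bound the angle between $V(\mathbf{A})$ and $\R u'$.

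\emph{Decompose the eigenvector.} Take a unit vector $v \in V(\mathbf{A})$ and write $v = \cos\phi\, u + \sin\phi\, s$. Applying $\mathbf{A}$ gives
\[
\lambda v = \mathbf{A}v = \sigma\cos\phi\, u' + \sigma^{-1}\sin\phi\, s'.
\]
Let $\psi \in [0,\pi/2]$ be the (unoriented) angle between the lines $\R v$ and $\R u'$. Because $\{u',s'\}$ is orthonormal, $\sin\psi$ equals the absolute value of the $s'$-component of $v$. That component is $\sigma^{-1}\sin\phi/\lambda$, so
\[
\sin\psi = \frac{|\sin\phi|}{|\lambda|\,\sigma} \le \frac{1}{|\lambda|\,\|\mathbf{A}\|}.
\]

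\emph{Convert sine to angle.} Since $\psi \in [0,\pi/2]$, Jordan's inequality $\sin\psi \ge 2\psi/\pi$ applies. It gives $\psi \le \tfrac{\pi}{2}\sin\psi \le \pi/(2|\lambda|\|\mathbf{A}\|)$, which is the claimed bound.

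There is no real obstacle here. The only point needing care is that $\sin\psi$ is computed against the image frame $\{u',s'\}$ rather than the domain frame $\{u,s\}$. The eigenvector equation $\mathbf{A}v = \lambda v$ lets us express $v$ in the image frame, and dividing by $\lambda$ is exactly what produces the factor $|\lambda|^{-1}$.
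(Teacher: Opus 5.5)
Your proof is correct and is essentially the paper's argument with the cited step carried out by hand. The paper notes that $|\mathbf{A}^{-1}v| = |\lambda|^{-1}$ for a unit eigenvector $v$, applies \cite[Proposition~1.13.5]{DF2022ESO1} to $\mathbf{A}^{-1}$, and uses that the most contracted direction of $\mathbf{A}^{-1}$ is $S(\mathbf{A}^{-1}) = \mathbf{A}U(\mathbf{A})$; your computation of the $s'$-coordinate of $v$ in the frame $\{u',s'\}$, which is the singular frame of $\mathbf{A}^{-1}$, is exactly that estimate, and it also gives the sharper intermediate bound $\sin\psi \le (|\lambda|\,\|\mathbf{A}\|)^{-1}$ before Jordan's inequality is applied.
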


\begin{proof}
For any unit vector $  v \in V(\mathbf{A})$, we have
\begin{equation}
    \left|\mathbf{A}^{-1}   v\right| = |\lambda|^{-1}.
\end{equation}
Thus, applying \cite[Proposition~1.13.5]{DF2022ESO1} to $\mathbf{A}^{-1}$ with $R = |\lambda|^{-1}$ and using $\|\mathbf{A}^{-1}\| = \|\mathbf{A}\|$, one obtains
\begin{align*}
\angle (V(\mathbf{A}), \mathbf{A}U(\mathbf{A}))
 = \angle(V(\mathbf{A}), S(\mathbf{A}^{-1}) ) \leq \frac{\pi}{2 |\lambda| \|\mathbf{A}\|},
\end{align*}
as promised.
\end{proof}

\end{appendix}

\bibliographystyle{abbrv}

\bibliography{refs}

\end{document}